\let\pa\partial
\let\na\nabla
\let\eps\varepsilon
\newcommand{\N}{{\mathbb N}}
\newcommand{\R}{{\mathbb R}}
\newcommand{\diver}{\operatorname{div}}
\newcommand{\dd}{\mathrm{d}}
\newcommand{\dom}{\mathcal{O}}
\newcommand{\E}{{\mathbb E}}
\newcommand{\F}{{\mathbb F}}
\newcommand{\Prob}{{\mathbb P}}
\newcommand{\DD}{\mathrm{D}}
\renewcommand{\L}{{\mathcal L}}
\newcommand{\simplex}{\mathcal{D}}
\newcommand{\blue}{\textcolor{blue}}
\newtheorem{theorem}{Theorem}
\newtheorem{lemma}[theorem]{Lemma}
\newtheorem{proposition}[theorem]{Proposition}
\newtheorem{remark}[theorem]{Remark}
\newtheorem{corollary}[theorem]{Corollary}
\newtheorem{definition}{Definition}
\begin{document}
\title[Stochastic Shigesada--Kawasaki--Teramoto model]{Global martingale solutions for 
stochastic Shigesada--Kawasaki--Teramoto population models} 

\author[M. Braukhoff]{Marcel Braukhoff}
\address{Mathematisches Institut, Heinrich-Heine-Universit\"at D\"usseldorf, 
Universit\"atsstr. 1, 40225 D\"usseldorf, Germany}
\email{marcel.braukhoff@hhu.de} 

\author[F. Huber]{Florian Huber}
\address{Institute for Analysis and Scientific Computing, Vienna University of  
	Technology, Wiedner Hauptstra\ss e 8--10, 1040 Wien, Austria;
	Department of Statistics and Operations Research, University of Vienna, 
	Kolingasse 14--16, 1090 Vienna, Austria}
\email{fhuber@univie.ac.at}

\author[A. J\"ungel]{Ansgar J\"ungel}
\address{Institute for Analysis and Scientific Computing, Vienna University of  
	Technology, Wiedner Hauptstra\ss e 8--10, 1040 Wien, Austria}
\email{juengel@tuwien.ac.at} 

\date{\today}

\thanks{The authors thank Prof.\ Alexandra Neam\c{t}u 
(Konstanz, Germany) for her constructive help in the preparation of this manuscript.
The second and third authors acknowledge partial support from   
the Austrian Science Fund (FWF), grants I3401, P33010, W1245, and F65.
The second author has been supported by the FWF, grant Y1235 of the START program.
This work has received funding from the European 
Research Council (ERC) under the European Union's Horizon 2020 research and 
innovation programme, ERC Advanced Grant no.~101018153.} 

\begin{abstract}
The existence of global nonnegative martingale solutions to cross-diffusion systems 
of Shigesada--Kawasaki--Teramoto type with multiplicative noise is proven. 
The model describes the stochastic segregation dynamics of an arbitrary number 
of population species in a bounded domain with no-flux boundary conditions. 
The diffusion matrix is generally 
neither symmetric nor positive semidefinite, which excludes standard methods
for evolution equations.
Instead, the existence proof is based on the entropy structure of the model,
a novel regularization of the entropy variable, higher-order moment estimates,
and fractional time regularity. The regularization technique is generic and is
applied to the population system with self-diffusion in any space dimension
and without self-diffusion in two space dimensions.  
\end{abstract}

\keywords{Population dynamics, cross diffusion, martingale solutions, 
multiplicative noise, entropy method, tightness of laws.}  
 
\subjclass[2000]{60H15, 35R60, 35Q92.}

\maketitle


\section{Introduction}

Shigesada, Kawasaki, and Teramoto (SKT) suggested in their seminal paper \cite{SKT79}
a deterministic cross-diffusion system for two competing species, 
which is able to describe the segregation of the populations. A random
influence of the environment or the lack of knowledge of certain biological parameters
motivate the introduction of noise terms, leading to the stochastic
system for $n$ species with the population density $u_i$ of the $i$th species: 
\begin{equation}\label{1.eq}
  \dd u_i - \diver\bigg(\sum_{j=1}^n A_{ij}(u)\na u_j\bigg) \dd t
	= \sum_{j=1}^n \sigma_{ij}(u)\dd W_j(t)\quad\mbox{in }\dom,\ t>0,\ i=1,\ldots,n,
\end{equation}
with initial and no-flux boundary conditions
\begin{equation}\label{1.bic}
  u_i(0)=u_i^0\quad\mbox{in }\dom, \quad
	\sum_{j=1}^n A_{ij}(u)\na u_j\cdot\nu = 0\quad\mbox{on }\pa\dom,\ t>0,\
	i=1,\ldots,n,
\end{equation}
and diffusion coefficients
\begin{equation}\label{1.SKT}
  A_{ij}(u) = \delta_{ij}\bigg(a_{i0} + \sum_{k=1}^n a_{ik}u_k\bigg) + a_{ij}u_i,
	\quad i,j=1,\ldots,n,
\end{equation}
where $\dom\subset\R^d$ ($d\ge 1$) is a bounded domain, $\nu$ is the
exterior unit normal vector to $\pa\dom$, $(W_1,\ldots,W_n)$ is an
$n$-dimensional cylindrical Wiener process, and $a_{ij}\ge 0$ for $i=1,\ldots,n$,
$j=0,\ldots,n$ are parameters.
The stochastic framework is detailed in Section \ref{sec.main}.

The deterministic analog of \eqref{1.eq}--\eqref{1.SKT} generalizes the 
two-species model of \cite{SKT79} to an arbitrary number of species. 
The deterministic model can be derived rigorously from nonlocal population systems 
\cite{DiMo21,Mou20}, stochastic interacting particle systems \cite{CDHJ21},
and finite-state jump Markov models \cite{BMM21,DDD19}.
The original system in \cite{SKT79} also contains a deterministic environmental
potential and Lotka--Volterra terms, which are neglected here for simplicity.

We call $a_{i0}$ the diffusion coefficients, $a_{ii}$
the self-diffusion coefficients, and $a_{ij}$ for $i\neq j$ the cross-diffusion
coefficients. 
We say that system \eqref{1.eq}-\eqref{1.SKT} is {\em with self-diffusion} if
$a_{i0}\ge 0$, $a_{ii} > 0$ for all $i=1,\ldots,n$, and
{\em without self-diffusion} if $a_{i0}>0$, $a_{ii}=0$ for all $i=1,\ldots,n$.

The aim of this work is to prove the existence of global nonnegative
martingale solutions to system \eqref{1.eq}--\eqref{1.SKT} allowing for large
cross-diffusion coefficients. The existence of a local
pathwise mild solution to \eqref{1.eq}--\eqref{1.SKT} with $n=2$ was shown
in \cite[Theorem 4.3]{KuNe20} under the assumption that the diffusion matrix
is positive definite. Global martingale solutions to a SKT model
with quadratic instead of linear coefficients $A_{ij}(u)$ were found in
\cite{DJZ19}. Besides detailed balance, this result needs a moderate smallness
condition on the cross-diffusion coefficients. 
We prove the existence of global martingale solutions to the SKT model
for general coefficients satisfying detailed balance.
This result seems to be new.

There are two major difficulties in the analysis of system \eqref{1.eq}.
The first difficulty is the fact that the diffusion matrix
associated to \eqref{1.eq} is generally neither symmetric nor positive semidefinite. 
In particular, standard semigroup theory is not applicable. These issues
have been overcome in \cite{ChJu04,ChJu06} in the deterministic case
by revealing a formal gradient-flow or entropy structure. 
The task is to extend this idea to the stochastic setting.

In the deterministic case, usually an implicit Euler time discretization is used
\cite{Jue15}. In the stochastic case, we need an explicit Euler
scheme because of the stochastic It\^o integral, but this excludes entropy estimates. 
An alternative is the Galerkin scheme, which reduces the infinite-dimensional stochastic
system to a finite-dimensional one; see, e.g., the proof of 
\cite[Theorem 4.2.4]{LiRo15}. This is possible only if energy-type ($L^2$)
estimates are available, i.e.\ if $u_i$ can be used as a test function.
In the present case, however, only entropy estimates
are available with the test function $\log u_i$,
which is not an element of the Galerkin space.

In the following, we describe our strategy to overcome these difficulties.
We say that system \eqref{1.eq} has an entropy structure if there exists a 
function $h:[0,\infty)^n\to[0,\infty)$, called an entropy density, 
such that the deterministic analog of \eqref{1.eq} 
can be written in terms of the entropy variables
(or chemical potentials) $w_i=\pa h/\pa u_i$ as
\begin{equation}\label{1.w}
  \pa_t u_i(w) - \diver\bigg(\sum_{j=1}^n B_{ij}(w)\na w_j\bigg) = 0, \quad
	i=1\ldots,n,
\end{equation}
where $w=(w_1,\ldots,w_n)$, $u_i$ is interpreted as a function of $w$, 
and $B(w)=A(u(w))h''(u(w))^{-1}$ 
with $B=(B_{ij})$ is positive semidefinite. 
For the deterministic analog of \eqref{1.eq}, it was 
shown in \cite{CDJ18} that the entropy density is given by
\begin{equation}\label{1.h}
  h(u) = \sum_{i=1}^n\pi_i \big(u_i(\log u_i-1)+1\big), \quad u\in[0,\infty)^n,
\end{equation}
where the numbers $\pi_i>0$ are assumed to satisfy 
$\pi_ia_{ij}=\pi_ja_{ji}$ for all $i,j=1,\ldots,n$.
This condition is the detailed-balance condition for the Markov chain associated
to $(a_{ij})$, and $(\pi_1,\ldots,\pi_n)$ is the corresponding reversible stationary
measure \cite{CDJ18}. 
Using $w_i=\pi_i\log u_i$ in \eqref{1.w} as a test function and 
summing over $i=1,\ldots,n$, a formal computation shows that
\begin{equation}\label{1.ei}
  \frac{\dd}{\dd t}\int_\dom h(u)\dd x
	+ 2\int_\dom\sum_{i=1}^n\pi_i\bigg(2a_{i0}|\na\sqrt{u_i}|^2
	+ 2a_{ii}|\na u_i|^2 + \sum_{j\neq i}a_{ij}|\na\sqrt{u_iu_j}|^2\bigg)\dd x = 0.
\end{equation}
A similar expression holds in the stochastic setting; see \eqref{5.ei}.
It provides $L^2$ estimates for $\na\sqrt{u_i}$ if $a_{i0}>0$ and for
$\na u_i$ if $a_{ii}>0$. Moreover, having proved the existence of a solution $w$
to an approximate version of \eqref{1.eq} leads to the positivity
of $u_i(w)=\exp(w_i/\pi_i)$ (and nonnegativity after passing to the 
de-regularization limit).

To define the approximate scheme, our idea is to ``regularize'' the entropy
variable $w$. Indeed, instead of the algebraic mapping $w\mapsto u(w)$,
we introduce the mapping $Q_\eps(w)=u(w) + \eps L^*Lw$, where 
$L:D(L)\to H$ with domain $D(L)\subset H$ is a 
suitable operator and $L^*$ its dual; see Section \ref{sec.op} for details.
The operator $L$ is chosen in such a way that all elements of $D(L)$ are 
bounded functions, implying that $u(w)$ is well defined.
Introducing the regularization operator $R_\eps:D(L)'\to D(L)$ as the inverse
of $Q_\eps:D(L)\to D(L)'$, the approximate scheme to \eqref{1.eq} is defined,
written in compact form, as
\begin{equation}\label{1.approx}
  \dd v(t) = \diver\big(B(R_\eps(v))\na R_\eps(v)\big)\dd t
	+ \sigma\big(u(R_\eps(v))\big)\dd W(t), \quad t>0.
\end{equation}

The existence of a local weak solution $v^\eps$ to \eqref{1.approx} with suitable
initial and boundary conditions is proved by applying the abstract result of
\cite[Theorem 4.2.4]{LiRo15}; see Theorem \ref{thm.approx}. The entropy
inequality for $w^\eps:=R_\eps(v^\eps)$ and $u^\eps:=u(w^\eps)$,
\begin{align*}
  \E&\sup_{0<t<T\wedge\tau_R}\int_\dom h(u^\eps(t))\dd x
	+ \frac{\eps}{2}\E\sup_{0<t<T\wedge\tau_R}\|Lw^\eps(t))\|_{L^2(\dom)}^2 \\
	&{}+ \E\sup_{0<t<T\wedge\tau_R}\int_0^t\int_\dom
	\na w^\eps(s):B(w^\eps(s))\na w^\eps(s)\dd x\dd s \le C(u^0,T),
\end{align*}
up to some stopping time $\tau_R>0$ allows us to extend the local solution to a 
global one (Proposition \ref{prop.ent}). 

For the de-regularization limit $\eps\to 0$, we need suitable uniform bounds.
The entropy inequality provides gradient bounds for $u_i^\eps$ in the case
with self-diffusion and for $(u_i^\eps)^{1/2}$ in the case without self-diffusion.
Based on these estimates, we use the Gagliardo--Nirenberg inequality to
prove uniform bounds for $u_i^\eps$ in $L^q(0,T;L^q(\dom))$ with $q\ge 2$. 
Such an estimate is crucial to define, for instance, the product $u_i^\eps u_j^\eps$. 
Furthermore, we show a uniform estimate for $u_i^\eps$ in the 
Sobolev--Slobodeckij space $W^{\alpha,p}(0,T;D(L)')$ for some $\alpha<1/2$ and
$p>2$ such that $\alpha p>1$. 
These estimates are needed to prove the tightness of the laws of
$(u^\eps)$ in some sub-Polish space and to conclude strong convergence in $L^2$
thanks to the Skorokhod--Jakubowski theorem. 

For the uniform estimates, we need to distinguish the cases with
and without self-diffusion. In the former case, we
obtain an $L^2(0,T;H^1(\dom))$ estimate for $u_i^\eps$, such that the
product $u_i^\eps\na u_j^\eps$ is integrable, and we can pass to the limit
in the coefficients $A_{ij}(u_i^\eps)$. Without self-diffusion,
we can only conclude that $(u_i^\eps)$ is bounded in $L^2(0,T;W^{1,1}(\dom))$,
and products like $u_i^\eps\na u_j^\eps$ may be not integrable. To overcome this
issue, we use the fact that 
\begin{equation}\label{1.form}
  \diver\bigg(\sum_{j=1}^n A_{ij}(u^\eps)\na u_j^\eps\bigg) 
	= \Delta\bigg(u_i^\eps\bigg(a_{i0} + \sum_{j=1}^n a_{ij} u_j^\eps\bigg)\bigg)
\end{equation}
and write \eqref{1.eq} in a ``very weak'' formulation by applying the Laplace
operator to the test function. Since the bound in $L^2(0,T;W^{1,1}(\dom))$
implies a bound in $L^2(0,T;L^2(\dom))$ bound in two space dimensions, products
like $u_i^\eps u_j^\eps$ are integrable. In the deterministic case, we can
exploit the $L^2$ bound for $\na (u_i^\eps u_j^\eps)^{1/2}$ to find a bound for
$u_i^\eps u_j^\eps$ in $L^1(0,T;L^1(\dom))$ in any space dimension, 
but the limit involves an identification
that we could not extend to the martingale solution concept.

On an informal level, we may state our main result as follows. We refer to 
Section \ref{sec.main} for the precise formulation.

\begin{theorem}[Informal statement]
Let $a_{ij}\ge 0$ satisfy the detailed-balance condition,
let the stochastic diffusion $\sigma_{ij}$ be Lipschitz continuous on the space
of Hilbert--Schmidt operators, and let a certain interaction condition between 
the entropy and stochastic diffusion hold (see Assumption (A5) below).
Then there exists a global nonnegative martingale solution to 
\eqref{1.eq}--\eqref{1.SKT} in the case with self-diffusion in any space dimension
and in the case without self-diffusion in at most two space dimensions.
\end{theorem}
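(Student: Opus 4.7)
\medskip
\noindent
\textbf{Proof plan.} The plan follows the four-step roadmap sketched in the introduction. First I construct the regularized SPDE \eqref{1.approx}. With $L$ chosen so that $D(L)\hookrightarrow L^\infty(\dom)$ (Section \ref{sec.op}), the map $Q_\eps(w)=u(w)+\eps L^*Lw$ is a homeomorphism $D(L)\to D(L)'$ whose inverse $R_\eps$ is Lipschitz and monotone. Writing \eqref{1.approx} as a single SPDE in the variable $v\in D(L)'$, the drift $v\mapsto\diver(B(R_\eps v)\na R_\eps v)$ is continuous and weakly coercive in the $(D(L),D(L)')$ duality thanks to the positive semidefiniteness of $B$, while the noise is Lipschitz on bounded sets after a cut-off at $\|v\|_{D(L)'}\le R$. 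Applying the variational existence theorem \cite[Thm.~4.2.4]{LiRo15} to the cut-off problem, I obtain a unique local solution $v^\eps$ on $[0,T\wedge\tau_R]$. Setting $w^\eps:=R_\eps(v^\eps)$ and $u^\eps:=u(w^\eps)$, positivity $u_i^\eps>0$ is automatic from $w_i^\eps=\pi_i\log u_i^\eps$.

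Next I derive the stochastic entropy inequality by applying It\^o's formula to $v\mapsto\int_\dom h(u(R_\eps v))\dd x+\tfrac{\eps}{2}\|LR_\eps v\|_{L^2}^2$. The drift contribution reproduces the deterministic dissipation in \eqref{1.ei}, the regularization contributes $\eps\|Lw^\eps\|_{L^2}^2$, and the It\^o correction generates a trace-type term involving $h''(u^\eps)\sigma(u^\eps)\sigma(u^\eps)^\top$, which Assumption~(A5) dominates by $C(1+\int_\dom h(u^\eps)\dd x)$. Taking suprema up to $T\wedge\tau_R$, applying Burkholder--Davis--Gundy to the martingale remainder, and closing by Gronwall in expectation yields the bound displayed before Proposition~\ref{prop.ent}. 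Since this bound is $R$-independent, letting $R\to\infty$ produces a global solution $(w^\eps,u^\eps)$ on $[0,T]$.

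From the entropy inequality I extract the uniform-in-$\eps$ estimates: $\sqrt{u_i^\eps}\in L^2(0,T;H^1(\dom))$ whenever $a_{i0}>0$, and $u_i^\eps\in L^2(0,T;H^1(\dom))$ whenever $a_{ii}>0$, together with $u^\eps\in L^\infty(0,T;L\log L(\dom))$. Gagliardo--Nirenberg interpolation then lifts these bounds to $u_i^\eps\in L^q(0,T;L^q(\dom))$ for an exponent $q\ge 2$ depending on the regime and on $d$. The fractional-time regularity $u_i^\eps\in W^{\alpha,p}(0,T;D(L)')$ with $\alpha<1/2$, $\alpha p>1$ follows from the mild form of \eqref{1.approx}: the drift is controlled in $D(L)'$ by the gradient estimates, invoking the identity \eqref{1.form} so that, in the no-self-diffusion case, only the products $u_i^\eps u_j^\eps$ (not $u_i^\eps\na u_j^\eps$) enter the bound, while the stochastic integral is handled by a standard BDG/factorization moment estimate. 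These bounds together make the laws $\L(u^\eps)$ tight on a sub-Polish space obtained by intersecting $L^2(0,T;L^2(\dom))_{\mathrm{strong}}$ with $C^0([0,T];D(L)'_{\mathrm{weak}})$, after which Skorokhod--Jakubowski delivers new random variables $\widetilde u^\eps,\widetilde W^\eps$ on a common probability space with almost-sure convergence in that topology, and hence strong $L^1(0,T;L^1(\dom))$ convergence of the products $\widetilde u_i^\eps\widetilde u_j^\eps$.

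The limit passage then splits according to the regime. \emph{With self-diffusion}, the strong $L^2(0,T;L^2(\dom))$ convergence of $\widetilde u_i^\eps$ combined with the weak $L^2$ convergence of $\na\widetilde u_j^\eps$ identifies $A_{ij}(\widetilde u^\eps)\na\widetilde u_j^\eps$ in the limit, after which the standard martingale-identification argument turns the limiting stochastic integral into an It\^o integral against a new cylindrical Wiener process adapted to the Skorokhod filtration. \emph{Without self-diffusion and $d\le 2$}, the entropy bound only gives $L^2(0,T;W^{1,1}(\dom))$, so one cannot pass to the limit directly in $\na u_j^\eps$; instead, I test against $\Delta\varphi$ and use the divergence identity \eqref{1.form}, so the only quantities appearing are $\widetilde u_i^\eps\widetilde u_j^\eps$, whose strong $L^2(L^2)$ convergence is supplied by the 2D Gagliardo--Nirenberg inequality applied to the entropy bound. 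I expect this last step to be the main obstacle: in the deterministic setting the missing identification can be replaced by exploiting the $L^2$ bound on $\na\sqrt{u_i u_j}$ in arbitrary dimension, but that trick is incompatible with the martingale framework because limits are only determined in law; restricting to $d\le 2$ is precisely what allows the $L^2(L^2)$ bound on the products to bypass this identification altogether.
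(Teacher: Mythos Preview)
Your proposal is correct and follows essentially the same route as the paper: regularize via $R_\eps=Q_\eps^{-1}$, apply the Liu--R\"ockner variational theorem with a cut-off for local existence, derive the stochastic entropy inequality by It\^o's formula applied to the regularized entropy, extend globally, extract uniform gradient and higher-moment bounds via Gagliardo--Nirenberg, obtain fractional-time regularity in $W^{\alpha,p}(0,T;D(L)')$, prove tightness in a sub-Polish space, and pass to the limit via Skorokhod--Jakubowski with the case split exactly as you describe. A few technical details differ slightly---the paper works with the \emph{strong} topology on $C^0([0,T];D(L)')$ for tightness (obtained via the Aldous condition and Simon's compactness), and in the no-self-diffusion fractional-time estimate it controls the drift through $W^{1,\rho}$ bounds on $u_i^\eps u_j^\eps$ (derived from the entropy bound on $\na\sqrt{u_iu_j}$) rather than by moving both derivatives onto the test function---but these are minor and your outline captures the full argument.
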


We discuss examples for $\sigma_{ij}(u)$ in Section \ref{sec.noise}.
Here, we only remark that an admissible diffusion term is
\begin{equation}\label{1.sigma}
  \sigma_{ij}(u)=\delta_{ij}u_i^\alpha\sum_{k=1}^\infty a_k(e_k,\cdot)_{U},
	\quad i.j=1,\ldots,n,
\end{equation}
where $1/2\le\alpha\le 1$, $\delta_{ij}$ is the Kronecker symbol,
$a_k\ge 0$ decays sufficiently fast, 
$(e_k)$ is a basis of the Hilbert space $U$ with inner product
$(\cdot,\cdot)_{U}$.

We end this section by giving a brief overview of the state of the art for
the deterministic SKT model. 
First existence results for the two-species model
were proven under restrictive conditions
on the parameters, for instance in one space dimension \cite{Kim84},
for the triangular system with $a_{21}=0$ \cite{LNW98}, or for small
cross-diffusion parameters, since in the latter situation the diffusion matrix
becomes positive definite \cite{Deu87}. Amann \cite{Ama89} proved that a priori 
estimates in the $W^{1,p}(\dom)$ norm with $p>d$ are sufficient to conclude
the global existence of solutions to quasilinear parabolic systems, and
he applied this result to the triangular SKT system. The first global existence
proof without any restriction on the parameters $a_{ij}$ (except nonnegativity)
was achieved in \cite{GGJ03} in one space dimension. This result was generalized
to several space dimensions in \cite{ChJu04,ChJu06} and to the whole space
problem in \cite{Dre08}. SKT-type systems with nonlinear coefficients $A_{ij}(u)$, 
but still for two species, were analyzed in \cite{DLM14,DLMT15}.
Global existence results for SKT-type models with an arbitrary number of species
and under a detailed-balance condition were first proved in \cite{CDJ18} and
later generalized in \cite{LeMo17}. 

This paper is organized as follows. We present our notation and the main
results in Section \ref{sec.main}. The operators needed to define the
approximative scheme are introduced in Section \ref{sec.op}. In Section 
\ref{sec.approx}, the existence of solutions to a general approximative scheme 
is proved and the corresponding entropy inequality is derived.
Theorems \ref{thm.skt} and \ref{thm.skt2} are shown in Sections \ref{sec.skt} and
\ref{sec.skt2}, respectively. Section \ref{sec.noise} is concerned with examples
for $\sigma_{ij}(u)$ satisfying our assumptions.
Finally, the proofs of some auxiliary lemmas are presented in 
Appendix \ref{sec.proofs}, and Appendix \ref{sec.aux} states a tightness criterion
that (slightly) extends \cite[Corollary 2.6]{BrMo14} \blue{to the Banach space setting}.


\section{Notation and main result}\label{sec.main}

\subsection{Notation and stochastic framework}

Let $\dom\subset\R^d$ ($d\ge 1$) be a bounded domain. The Lebesgue and Sobolev
spaces are denoted by $L^p(\dom)$ and $W^{k,p}(\dom)$, respectively, where
$p\in[1,\infty]$, $k\in\N$, and $H^k(\dom)=W^{k,2}(\dom)$. 
For notational simplicity, we generally do not distinguish between $W^{k,p}(\dom)$
and $W^{k,p}(\dom;\R^n)$.
We set $H_N^m(\dom) = \{v\in H^m(\dom):\na v\cdot\nu=0$ on $\pa\dom\}$ for $m\ge 2$.
If $u=(u_1,\ldots,u_n)\in X$
is some vector-valued function in the normed space $X$, we write
$\|u\|_X^2=\sum_{i=1}^n\|u_i\|_X^2$. The inner product of a Hilbert space $H$
is denoted by $(\cdot,\cdot)_H$, and $\langle\cdot,\cdot\rangle_{V',V}$ is the dual
product between the Banach space $V$ and its dual $V'$. If $F:U\to V$
is a Fr\'echet differentiable function between Banach spaces $U$ and $V$, 
we write $\DD F[v]:U\to V$ for its Fr\'echet derivative, for any $v\in U$.

Given two quadratic matrices $A=(A_{ij})$, $B=(B_{ij})\in\R^{n\times n}$, 
$A:B=\sum_{i,j=1}^n A_{ij}B_{ij}$ is the Frobenius matrix product,
$\|A\|_F=(A:A)^{1/2}$ the Frobenius norm of $A$, and $\operatorname{tr}A
=\sum_{i=1}^n A_{ii}$ the trace of $A$. The constants $C>0$
in this paper are generic and their values change from line to line.

Let $(\Omega,\mathcal{F},\Prob)$ be a probability space endowed with a complete
right-continuous filtration $\F=(\mathcal{F}_t)_{t\ge 0}$ and let 
$H$ be a Hilbert space.
Then $L^0(\Omega;H)$ consists of all measurable functions from $\Omega$ to $H$, and 
$L^2(\Omega;H)$ consists of all $H$-valued random variables $v$ such that
$\E\|v\|_H^2=\int_\Omega\|v(\omega)\|_H^2\Prob(\dd \omega)<\infty$. 
Let $U$ be a separable Hilbert space 
and $(e_k)_{k\in\N}$ be an orthonormal basis of $U$. 
The space of Hilbert--Schmidt operators from $U$ to $L^2(\dom)$ is defined by
$$
  \L_2(U;L^2(\dom)) = \bigg\{F:U\to L^2(\dom) \mbox{ linear, continuous}:
	\sum_{k=1}^\infty\|Fe_k\|_{L^2(\dom)}^2 < \infty\bigg\},
$$
and it is endowed with the norm $\|F\|_{\L_2(U;L^2(\dom))} 
= (\sum_{k=1}^\infty\|Fe_k\|_{L^2(\dom)}^2)^{1/2}$. 

Let $W=(W_1,\ldots,W_n)$ be an $n$-dimensional $U$-cylindrical Wiener process,
taking values in the separable Hilbert space $U_0\supset U$ 
and adapted to the filtration $\F$. We can write
$W_j=\sum_{k=1}^\infty e_k W_j^k$, where 
$(W_j^k)$ is a sequence of independent standard
one-dimensional Brownian motions \cite[Section 4.1.2]{DaZa14}. 
Then $W_j(\omega)\in C^0([0,\infty);U_0)$ for a.e.\ $\omega$ 
\cite[Section 2.5.1]{LiRo15}.

\subsection{Assumptions}

We impose the following assumptions:

\begin{itemize}
\item[(A1)]  Domain: $\dom\subset\R^d$ ($d\ge 1$) is a bounded domain
with Lipschitz boundary. Let $T>0$ and set $Q_T=\dom\times(0,T)$. 

\item[(A2)] Initial datum: $u^0=(u_1^0,\ldots,u_n^0)\in 
L^\infty(\Omega;L^2(\dom;\R^n))$ is a $\mathcal{F}_0$-measurable 
random variable satisfying $u^0(x)\ge 0$ for a.e.\ $x\in\dom$
$\Prob$-a.s.

\item[(A3)] Diffusion matrix: $a_{ij}\ge 0$ for $i=1,\ldots,n$, $j=0,\ldots,n$ and
there exist $\pi_1,\ldots,\pi_n>0$ such that
$\pi_i a_{ij}=\pi_j a_{ji}$ for all $i,j=1,\ldots,n$ (detailed-balance condition).

\item[(A4)] Multiplicative noise: $\sigma=(\sigma_{ij})$ is an $n\times n$ matrix, 
where $\sigma_{ij}:L^2(\dom;\R^n)\to
\L_2(U;L^2(\dom))$ is $\mathcal{B}(L^2(\dom;\R^n))/$ $\mathcal{B}
(\L_2(U;L^2(\dom)))$-measurable and $\F$-adapted. 
Furthermore, there exists $C_\sigma>0$ such that
for all $u$, $v\in L^2(\dom;\R^n)$,
\begin{align*}
  \|\sigma(u)-\sigma(v)\|_{\L_2(U;L^2(\dom))} 
	&\le C_\sigma\|u-v\|_{L^2(\dom)}, \\
  \|\sigma(v)\|_{\L_2(U;L^2(\dom))} &\le C_\sigma(1+\|v\|_{L^2(\dom)}).
\end{align*}

\item[(A5)] Interaction between entropy and noise: There exists $C_h>0$ such that
for all $u\in L^\infty(\dom\times(0,T))$,
\begin{align*}
  \bigg\{\int_0^t\sum_{k=1}^\infty\sum_{i,j=1}^n
	\bigg(\int_\dom \frac{\pa h}{\pa u_i}(u(s))
	\sigma_{ij}(u(s))e_k\dd x\bigg)^2\dd s\bigg\}^{1/2}
	&\le C_h\bigg(1 + \int_0^t\int_\dom h(u(s))\dd x\dd s\bigg), \\
	\int_0^t\sum_{k=1}^\infty\int_\dom\operatorname{tr}\big[
	(\sigma(u)e_k)^T h''(u)\sigma(u)e_k\big](s)\dd x\dd s 
	&\le C_h\bigg(1 + \int_0^t\int_\dom h(u(s))\dd x\dd s\bigg),
\end{align*}
where $h$ is the entropy density defined in \eqref{1.h}.
\end{itemize}

\begin{remark}[Discussion of the assumptions]\rm
\begin{itemize}
\item[(A1)] The Lipschitz regularity of the boundary $\pa\dom$ is needed to apply
the Sobolev and Gagliardo--Nirenberg inequalities.

\item[(A2)] The regularity condition on $u^0$ can be
weakened to $u^0\in L^p(\Omega;L^2(\dom;\R^n))$ for sufficiently large $p\ge 2$
(only depending on the space dimension); it is used to derive the higher-order moment
estimates.

\item[(A3)] The detailed-balance condition is also needed in the deterministic
case to reveal the entropy structure of the system; see \cite{CDJ18}.

\item[(A4)] The Lipschitz continuity of the stochastic diffusion $\sigma(u)$
is a standard condition for stochastic PDEs; see, e.g., \cite{PrRo07}.

\item[(A5)] This is the most restrictive assumption. It compensates for 
the singularity of $(\pa h/\pa u_i)(u)=\pi_i\log u_i$ at $u_i=0$.
We show in Lemma \ref{lem.sigma} that
$$
  \sigma_{ij}(u)(\cdot) = \frac{u_i\delta_{ij}}{1+u_i^{1/2+\eta}} 
	\sum_{k=1}^\infty a_k(e_k,\cdot)_U
$$
satisfies Assumption (A5), where $\eta>0$ and $(a_k)\in\ell^2(\R)$. 
Taking into account the gradient estimate from the
entropy inequality (see \eqref{1.ei}), we can allow for more general stochastic 
diffusion terms like \eqref{1.sigma}; see Lemma \ref{lem.sigma2}. 
\end{itemize}
\end{remark}

\blue{\begin{remark}[Reaction terms]\rm
It is possible to include additional nonlinear, continuous reaction terms 
$f:\R^{n}\to \R^{n}$ satisfying
$$
  \int_0^t \sum_{i=1}^{n} \int_\dom f_{i}(u)\frac{\partial h}{\partial u_{i}}
	\dd x\dd s \le C_f\bigg(1 + \int_0^t\int_\dom h(u(s))\dd x\dd s\bigg).
$$
A prominent choice are the so-called Lotka-Volterra source terms
$$
  f_{i}(u)=\bigg(b_{i0}-\sum_{j=1}^{n}b_{ij}u_{j}\bigg)u_{i},\quad i=1,2,
$$
where $b_{ij}\geq 0$ for $i=1,\dots,n$, $j=0,1,\dots,n$. Considering the entropy
density $h$ given by \eqref{1.h}, it is easy to see that this reaction term even 
improves the integrability of the solution, due to bounds for terms of the 
form $u_{i}^{2}\log(u_{i})$, $i=1,\dots,n$.
\end{remark}}


\subsection{Main results}

Let $T>0$, $m\in\N$ with $m>d/2+1$, and $D(L)=H_N^m(\dom)$.

\begin{definition}[Martingale solution]\label{def.skt}
A {\em martingale solution} to \eqref{1.eq}--\eqref{1.SKT} is the triple
$(\widetilde U,\widetilde W,\widetilde u)$ such that $\widetilde U
=(\widetilde \Omega,\widetilde{\mathcal F},\widetilde\Prob,\widetilde\F)$
is a stochastic basis with filtration 
$\widetilde \F=(\widetilde{\mathcal F}_t)_{t\ge 0}$, 
$\widetilde W$ is an $n$-dimensional cylindrical Wiener process,
and $\widetilde u=(\widetilde u_1,\ldots,\widetilde u_n)$ 
is a continuous $D(L)'$-valued $\widetilde{\F}$-adapted process such that
$\widetilde u_i\ge 0$ a.e.\ in $\dom\times(0,T)$ $\widetilde\Prob$-a.s., 
\begin{equation}\label{2.regul}
  \widetilde u_i\in L^0(\widetilde \Omega;C^0([0,T];D(L)'))\cap
	L^0(\widetilde \Omega;L^2(0,T;H^1(\dom))),
\end{equation}
the law of $\widetilde u_i(0)$ is the same as for $u_i^0$,
and for all $\phi\in D(L)$, $t\in(0,T)$, $i=1,\ldots,n$, $\widetilde \Prob$-a.s.,
\begin{align}\label{2.martin}
  \langle\widetilde u_i(t),\phi\rangle_{D(L)',D(L)} 
	&= \langle\widetilde u_i(0),\phi\rangle_{D(L)',D(L)}
	- \sum_{j=1}^n\int_0^t\int_\dom A_{ij}(\widetilde u(s))
	\na\widetilde u_j(s)\cdot\na\phi\dd x\dd s \\
  &\phantom{xx}{}+ \sum_{j=1}^n\int_\dom\bigg(\int_0^t\sigma_{ij}(\widetilde u(s))
	\dd\widetilde W_j(s)\bigg)\phi\dd x. \nonumber
\end{align}
\end{definition}

Our main results read as follows.

\begin{theorem}[Existence for the SKT model with self-diffusion]\label{thm.skt}\sloppy
Let Assumptions (A1)-- (A5) be satisfied and let $a_{ii}>0$ for $i=1,\ldots,n$.
Then \eqref{1.eq}--\eqref{1.SKT}
has a global nonnegative martingale solution in the sense of Definition \ref{def.skt}.
\end{theorem}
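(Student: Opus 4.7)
The plan is to execute the four-step program sketched in the introduction: construct solutions to a regularized stochastic system, propagate a stochastic entropy estimate globally in time, extract $\eps$-uniform bounds sufficient for compactness, and pass to the limit via a Skorokhod--Jakubowski argument. First, on the space $D(L)=H_N^m(\dom)$, I would set up the approximate equation \eqref{1.approx} using the operators $Q_\eps$ and $R_\eps$ constructed in Section \ref{sec.op}. Existence of a local variational solution $v^\eps$ is handed off to the abstract theorem \cite[Theorem 4.2.4]{LiRo15} (invoked here as Theorem \ref{thm.approx}); the $\eps L^*Lw$ term is precisely what is needed to verify the coercivity hypothesis on the drift viewed as an operator $D(L)\to D(L)'$. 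Setting $w^\eps:=R_\eps(v^\eps)\in D(L)\hookrightarrow L^\infty(\dom)$ and $u^\eps:=\exp(w^\eps/\pi)$ yields approximate densities that are strictly positive and smooth enough to be used as arguments of the entropy functional.

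The second step is to apply It\^o's formula to $w\mapsto\int_\dom h(u(w))\dd x+(\eps/2)\|Lw\|_{L^2(\dom)}^2$ along $w^\eps$. The drift contribution produces the deterministic dissipation \eqref{1.ei}---containing the crucial term $\pi_i a_{ii}|\na u_i^\eps|^2$ since $a_{ii}>0$---plus the regularization dissipation $\eps\|Lw^\eps\|_{L^2}^2$, while the It\^o correction is exactly the trace expression controlled by Assumption (A5). A Gronwall argument then delivers the entropy inequality stated just before Proposition \ref{prop.ent}, initially up to a localization stopping time $\tau_R$. Since the resulting bound does not depend on $R$, a standard argument lets $\tau_R\to T$ and produces a global solution $(v^\eps,w^\eps,u^\eps)$ on $[0,T]$.

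Third, the entropy inequality provides a uniform $L^2(0,T;H^1(\dom))$-bound for $u^\eps$ (exploiting $a_{ii}>0$) together with an $L^\infty(0,T;L^1(\dom))$-bound for $u_i^\eps\log u_i^\eps$. The Gagliardo--Nirenberg inequality upgrades this to uniform bounds for $u_i^\eps$ in $L^q(Q_T)$ for some $q>2$, which renders the quadratic drift $A_{ij}(u^\eps)\na u_j^\eps$ equi-integrable in $L^1(Q_T)$. Higher-order moments---needed to justify expectations at the Skorokhod stage---follow from It\^o's formula applied to powers of $\int h(u^\eps)\dd x$ together with the Burkholder--Davis--Gundy inequality and the $L^\infty(\Omega)$-assumption (A2). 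Finally, inserting these estimates back into \eqref{1.approx} and applying BDG to the stochastic integral produces the Sobolev--Slobodeckij bound for $u_i^\eps$ in $W^{\alpha,p}(0,T;D(L)')$ with $\alpha<1/2$ and $\alpha p>1$.

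These estimates together with the tightness criterion of Appendix \ref{sec.aux} guarantee that the laws of $(u^\eps)$ are tight on a sub-Polish space built from $L^2(Q_T)$ and $C^0([0,T];D(L)')$ with $D(L)'$ equipped with a suitable weak topology. Skorokhod--Jakubowski then supplies, on a new stochastic basis $\widetilde U$, variables $\widetilde u^\eps$ with the same laws converging $\widetilde\Prob$-a.s.\ strongly in $L^2(Q_T)$ to some $\widetilde u$, together with cylindrical Wiener processes $\widetilde W^\eps$. Strong $L^2$-convergence combined with weak $L^2$-convergence of the gradients allows passage to the limit in the drift; the Lipschitz continuity of $\sigma$ and a standard martingale identification take care of the stochastic integral, while the regularization $\eps L^*Lw^\eps$ disappears in $D(L)'$ because $\E[\eps\|Lw^\eps\|^2]$ is bounded uniformly in $\eps$. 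Nonnegativity of $\widetilde u$ is inherited from the strict positivity of $u^\eps$. I expect the main obstacle to be the rigorous justification of the stochastic It\^o formula in Step 2: the entropy functional is singular at the boundary $\{u_i=0\}$ of the state space, so the $L^\infty$-embedding of $D(L)$ at the $\eps$-level must be exploited to make sense of $h'(u^\eps)=\pi\log u^\eps$, and Assumption (A5) is calibrated precisely so that the It\^o correction remains absorbable by Gronwall uniformly as $\eps\to 0$.
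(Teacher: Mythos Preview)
Your proposal is correct and mirrors the paper's proof in Section~\ref{sec.skt} closely at every stage---approximation via $R_\eps$, entropy estimate, higher moments and Gagliardo--Nirenberg bounds, fractional time regularity, tightness, and Skorokhod--Jakubowski passage to the limit. One small clarification on the point you flag as the main obstacle: the It\^o formula is applied to the regularized entropy $\mathcal H$ as a functional of $v^\eps\in D(L)'$ (not along $w^\eps$), the key identity being $\DD\mathcal H[v]=R_\eps(v)=w^\eps$, which is precisely what lets Krylov's version \cite{Kry13} apply and delivers the entropy test function $h'(u^\eps)$ without having to push the SDE through $R_\eps$.
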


\begin{theorem}[Existence for the SKT model without self-diffusion]\label{thm.skt2}
Let Assumptions (A1)--(A5) be satisfied, let $d\le 2$, 
and let $a_{0i}>0$ for $i=1,\ldots,n$. We strengthen Assumption (A4) slightly
by assuming that for all $v\in L^2(\dom;\R^n)$,
$$
  \|\sigma(v)\|_{\mathcal{L}_2(U;L^2(\dom))}\le C_\sigma(1+\|v\|_{L^2(\dom)}^\gamma),
$$
where $\gamma<1$ if $d=2$ and $\gamma=1$ if $d=1$. Then \eqref{1.eq}--\eqref{1.SKT}
has a global nonnegative martingale solution in the sense of Definition
\ref{def.skt} with the exception that \eqref{2.regul} and \eqref{2.martin}
are replaced by
$$
  \widetilde u_i\in L^0(\widetilde \Omega;C^0([0,T];D(L)'))\cap
	L^0(\widetilde \Omega;L^2(0,T;W^{1,1}(\dom)))
$$
and, for all $\phi\in D(L)\cap W^{2,\infty}(\dom)$,
\begin{align*}
  \langle\widetilde u_i(t),\phi\rangle_{D(L)',D(L)} 
	&= \langle\widetilde u_i(0),\phi\rangle_{D(L)',D(L)}
	- \int_0^t\int_\dom \widetilde u_i(s)
	\bigg(a_{i0}+\sum_{j=1}^n a_{ij}\widetilde u_j(s)\bigg)\Delta\phi\dd x\dd s \\
  &\phantom{xx}{}+ \sum_{j=1}^n\int_\dom\bigg(\int_0^t\sigma_{ij}(\widetilde u(s))
	\dd\widetilde W_j(s)\bigg)\phi\dd x.
\end{align*}
\end{theorem}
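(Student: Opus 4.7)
The plan is to mirror the proof of Theorem \ref{thm.skt} by exploiting the approximate system \eqref{1.approx}, the entropy inequality of Proposition \ref{prop.ent}, and a tightness/Skorokhod argument, but with modifications tailored to the absence of self-diffusion. Because $a_{ii}=0$, the dissipation in \eqref{1.ei} provides $L^2$ bounds only on $\na\sqrt{u_i^\eps}$ (and on $\na\sqrt{u_i^\eps u_j^\eps}$ whenever $a_{ij}>0$), so the product $u_i^\eps\na u_j^\eps$ cannot be expected to be integrable. The remedy is the ``very weak'' formulation based on the identity \eqref{1.form}: integration by parts twice places all spatial derivatives onto the test function, which is why the weak form in the statement requires $\phi\in D(L)\cap W^{2,\infty}(\dom)$.

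First I would solve the approximate problem \eqref{1.approx} to produce, for each $\eps>0$, strictly positive processes $u^\eps=u(w^\eps)$ satisfying the entropy inequality. From the dissipation I get a uniform bound on $\na\sqrt{u_i^\eps}$ in $L^2(\Omega;L^2(0,T;L^2(\dom)))$, while mass conservation bounds $\sqrt{u_i^\eps}$ in $L^\infty(0,T;L^2(\dom))$. In $d\le 2$, the Gagliardo--Nirenberg inequality upgrades this to a uniform bound for $\sqrt{u_i^\eps}$ in $L^4(0,T;L^4(\dom))$, hence $u_i^\eps\in L^2(0,T;L^2(\dom))$ and the quadratic products $u_i^\eps u_j^\eps$ lie in $L^1(0,T;L^1(\dom))$. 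Moreover, $\na u_i^\eps=2\sqrt{u_i^\eps}\na\sqrt{u_i^\eps}$ then belongs to $L^2(0,T;L^1(\dom))$ by Cauchy--Schwarz, which gives the announced regularity $\widetilde u_i\in L^2(0,T;W^{1,1}(\dom))$ at the limit. The strengthened sublinear growth of $\sigma$ with $\gamma<1$ for $d=2$ (resp.\ $\gamma=1$ for $d=1$) is precisely what is needed to close a Gr\"onwall estimate for $\E\sup_{[0,T]}\|u^\eps\|_{L^2}^p$ via It\^o's formula and thereby propagate higher moments.

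Next I would derive fractional time regularity of $u^\eps$ in $W^{\alpha,p}(0,T;D(L)')$ for some $\alpha<1/2$ and $p>2$ with $\alpha p>1$, by testing the very weak form against $\phi\in D(L)\subset W^{2,\infty}(\dom)$, bounding the drift via \eqref{1.form} and the $L^2(0,T;L^2(\dom))$ bound on $u_i^\eps$, and estimating the stochastic integral by Burkholder--Davis--Gundy combined with Assumption (A4). A tightness criterion of the type in Appendix \ref{sec.aux} then yields tightness of the laws of $(u^\eps)$ in a suitable sub-Polish space; the Skorokhod--Jakubowski theorem supplies a new stochastic basis carrying random variables $\widetilde u^\eps,\widetilde u$ and a cylindrical Wiener process $\widetilde W$ such that $\widetilde u^\eps\to\widetilde u$ $\widetilde\Prob$-almost surely and strongly in $L^2(0,T;L^2(\dom))$, so in particular $\widetilde u_i^\eps\widetilde u_j^\eps\to\widetilde u_i\widetilde u_j$ in $L^1(\dom\times(0,T))$.

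Finally I would pass to the limit $\eps\to 0$ in the very weak formulation. Strong $L^2$ convergence handles the nonlinear drift $\widetilde u_i^\eps(a_{i0}+\sum_j a_{ij}\widetilde u_j^\eps)\Delta\phi$, the regularization contribution $\eps(Lw^\eps,L\phi)_{L^2(\dom)}$ vanishes thanks to the uniform bound on $\eps\|Lw^\eps\|_{L^2(\dom)}^2$ from the entropy inequality, and the stochastic integral is handled by the standard martingale identification (cf.\ \cite{LiRo15}); nonnegativity of $\widetilde u$ is inherited from the strict positivity of $u^\eps$. The main obstacle is managing the quadratic drift without any gradient bound on $u_i^\eps$ itself: this simultaneously forces the restriction $d\le 2$ (so that Gagliardo--Nirenberg upgrades the $W^{1,1}$ bound to $L^2$ in space, making $u_i^\eps u_j^\eps$ integrable) and the recourse to the very weak form. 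The alternative deterministic strategy hinted at in the introduction, which identifies the $L^1$ limit of $\sqrt{u_iu_j}$ in arbitrary dimension, does not transfer to the martingale framework, and this is what confines Theorem \ref{thm.skt2} to at most two space dimensions.
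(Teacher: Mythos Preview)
Your overall strategy matches the paper's: entropy bounds on $\sqrt{u_i^\eps}$, a Gagliardo--Nirenberg upgrade to $u_i^\eps\in L^2(Q_T)$ for $d\le 2$, fractional time regularity, Skorokhod--Jakubowski, and passage to the limit in the very weak form. However, you misattribute the role of the sublinear growth condition $\gamma<1$ (for $d=2$). You claim it is needed to close a Gr\"onwall loop for $\E\sup_{[0,T]}\|u^\eps\|_{L^2}^p$ via It\^o's formula, but the approximate equation is for $v^\eps$, not $u^\eps$, and the paper never applies It\^o to $\|u^\eps\|_{L^2}^2$; the higher-moment bounds come from raising the \emph{entropy} inequality to the $p$th power (Lemma~\ref{lem.hom}), using Assumption (A5) rather than (A4). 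The condition $\gamma<1$ enters instead in the fractional time estimate for the \emph{stochastic integral}: after Burkholder--Davis--Gundy and H\"older one needs $p\le 4/(\gamma d)$ together with $\alpha<1/2$ and $\alpha p>1$, which for $d=2$ forces $p>2$ and hence $\gamma<1$.

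Your fractional time claim ``$\alpha<1/2$, $p>2$'' also fails for the drift. Bounding the drift via \eqref{1.form} and the $L^2(Q_T)$ bound alone yields only $\|u_i^\eps u_j^\eps\|_{L^1(\dom)}\in L^1(0,T)$ when $d=2$, so the time integral of the drift is merely in $W^{1,1}(0,T;D(L)')$, which does not embed into any H\"older space. The paper instead proves the sharper bound $u_i^\eps\in L^{\rho_1}(0,T;W^{1,\rho_1}(\dom))$ with $\rho_1=(d+2)/(d+1)>1$ (and an analogous $W^{1,\rho_2}$ bound for $u_i^\eps u_j^\eps$), so the drift lies in $L^{\rho_1}(0,T;D(L)')$ and one can take $\alpha$ close to $1$ with $p=\rho_1$; the drift and stochastic parts thus use \emph{different} pairs $(\alpha,p)$, each satisfying $\alpha p>1$, and their H\"older regularities combine. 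One further subtlety you omit: for $d=2$ the embedding $W^{1,1}(\dom)\hookrightarrow L^2(\dom)$ is not compact, so tightness in $L^2(0,T;L^2(\dom))$ does not follow directly from Appendix~\ref{sec.aux}; the paper closes this gap via the uniform bound on $u_i^\eps\log u_i^\eps$ in $L^\infty(0,T;L^1(\dom))$ from the entropy, invoking \cite[Prop.~1]{BCJ20}.
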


The weak formulation for the SKT system without self-diffusion is weaker than
that one with self-diffusion, since we have only the gradient regularity
$\na\widetilde{u}_i\in L^1(\dom)$, and $A_{ij}(\widetilde{u})$ may be nonintegrable.
However, system \eqref{1.eq} can be written in Laplacian form
according to \eqref{1.form}, which allows for the ``very weak'' formulation stated
in Theorem \ref{thm.skt2}. The condition on $\gamma$ if $d=2$ is needed to
prove the fractional time regularity for the approximative solutions.

\begin{remark}[Nonnegativity of the solution]\rm
The a.s.\ nonnegativity of the population densities is a consequence of the
entropy structure, since the approximate densities $u_i^\eps$ satisfy
$u_i^\eps = u_i(R_\eps(v^\eps)) = \exp(R_\eps(v^\eps)/\pi_i)>0$ a.e.\ in $Q_T$.
This may be surprising since we do not assume that the noise vanishes at zero,
i.e.\ $\sigma_{ij}(u)=0$ if $u_i=0$.
This condition is replaced by the weaker integrability condition for 
$\sigma_{ij}(u)\log u_i$ in Assumption (A5). A similar, but pointwise
condition was imposed in the deterministic case; see Hypothesis (H3) in
\cite[Section 4.4]{Jue16}. The examples in Section \ref{sec.noise} satisfy
$\sigma_{ij}(u)=0$ if $u_i=0$.
\qed
\end{remark}


\section{Operator setup}\label{sec.op}

In this section, we introduce the operators needed to define the approximate
scheme. 

\subsection{Definition of the connection operator $L$}

We define an operator $L$ that ``connects'' two Hilbert spaces $V$ and $H$ satisfying
$V\subset H$. This abstract operator allows us to define a regularization operator
that ``lifts'' the dual space $V'$ to $V$. 

\begin{proposition}[Operator $L$]\label{prop.L}
Let $V$ and $H$ be separable Hilbert spaces 
such that the embedding $V\hookrightarrow H$ 
is continuous and dense. Then there exists a bounded, self-adjoint, positive operator
$L:D(L)\to H$ with domain $D(L)=V$.
Moreover, it holds for $L$ and its dual operator $L^*:H\to V'$ (we identify
$H$ and its dual $H'$) that, for some $0<c<1$,
\begin{equation}\label{3.L}
  c\|v\|_V \le \|L(v)\|_H = \|v\|_V, \quad \|L^*(w)\|_{V'}\le \|w\|_H, 
	\quad v\in V,\ w\in H.
\end{equation}
\end{proposition}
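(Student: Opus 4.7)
The plan is to realize $L$ as the positive square root of the self-adjoint operator on $H$ associated, via Kato's representation theorems, with the quadratic form induced by the $V$-inner product. The construction exploits only the Gelfand triple structure $V\subset H\subset V'$ coming from the continuous dense embedding, so it is completely abstract.

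Concretely, I first consider the symmetric sesquilinear form $a(v,w):=(v,w)_V$ on $V\times V$, viewed as a densely defined positive form on $H$. Continuity of the embedding gives $\|v\|_H\le C_0\|v\|_V$, so the form norm $(a(v,v)+\|v\|_H^2)^{1/2}$ is equivalent to $\|v\|_V$; completeness of $V$ in its own norm therefore makes $a$ a closed form in the sense of Kato, and positivity is immediate from $a(v,v)=\|v\|_V^2$. Kato's first representation theorem then yields a unique self-adjoint positive operator $\mathcal{A}:D(\mathcal{A})\subset H\to H$ with form domain $V$ satisfying $(\mathcal{A}v,w)_H=(v,w)_V$ for all $v\in D(\mathcal{A})$, $w\in V$. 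I then set $L:=\mathcal{A}^{1/2}$ via the Borel functional calculus. By Kato's second representation theorem, the domain $D(\mathcal{A}^{1/2})$ coincides with the form domain of $\mathcal{A}$, so $D(L)=V$, and
\[
\|Lv\|_H^2=(\mathcal{A}^{1/2}v,\mathcal{A}^{1/2}v)_H=a(v,v)=\|v\|_V^2
\qquad\text{for every }v\in V,
\]
which yields the identity (and hence both inequalities) in \eqref{3.L}. Self-adjointness and positivity of $L$ are automatic from the functional calculus.

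For the dual estimate, Banach-space duality together with the identification $H'=H$ gives $\langle L^*w,v\rangle_{V',V}=(w,Lv)_H$ for $w\in H$, $v\in V$, whence the Cauchy--Schwarz inequality and the norm identity just established imply
\[
\|L^*w\|_{V'}=\sup_{\|v\|_V\le 1}|(w,Lv)_H|\le\|w\|_H\sup_{\|v\|_V\le 1}\|Lv\|_H=\|w\|_H.
\]
The main technical point I expect to be delicate is the exact identification $D(\mathcal{A}^{1/2})=V$, rather than the trivial inclusion $D(\mathcal{A})\subset D(\mathcal{A}^{1/2})$. This is precisely the content of Kato's second representation theorem and is essential downstream: it ensures that $L$ is defined on \emph{all} of $V$, so that the regularization operator $Q_\eps=u+\eps L^*L$ used in \eqref{1.approx} and throughout Section~\ref{sec.op} is well posed on the correct function spaces.
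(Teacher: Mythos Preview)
Your proof is correct and follows essentially the same route as the paper: the operator $\mathcal{A}$ you obtain from Kato's first representation theorem is exactly the operator $\Lambda$ the paper constructs explicitly as the inverse of the Riesz-type operator $G:H\to V$ defined by $(v,G(w))_V=(v,w)_H$, and in both cases $L=\Lambda^{1/2}=\mathcal{A}^{1/2}$. The only difference is packaging: where you invoke Kato's second representation theorem to conclude $D(\mathcal{A}^{1/2})=V$ and $\|Lv\|_H=\|v\|_V$, the paper carries out this identification by hand (a closure-plus-orthogonality argument following \cite{KPS82}), which amounts to reproving the relevant piece of Kato's theorem in this special case.
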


We abuse slightly the notation by denoting both dual and adjoint operators by $A^*$.
The proof is similar to \cite[Theorem 1.12]{KPS82}. For the convenience of the
reader, we present the full proof.

\begin{proof}
We first construct some auxiliary operator by means of the Riesz representation
theorem. Let $w\in H$. The mapping $V\to\R$, $v\mapsto(v,w)_H$, is linear 
and bounded. Hence, there exists a unique
element $\widetilde{w}\in V$ such that $(v,\widetilde{w})_{V}=(v,w)_H$
for all $v\in V$. This defines the linear operator $G:H\to V$,
$G(w):=\widetilde{w}$, such that
$$
  (v,w)_H = (v,G(w))_{V}\quad\mbox{for all }v\in V,\ w\in H.
$$
The operator $G$ is bounded and symmetric, since $\|G(w)\|_V=\|\widetilde{w}\|_V
= \|w\|_H$ and
\begin{equation}\label{3.G}
  (G(w),v)_H = (G(w),G(v))_V = (w,G(v))_H\quad\mbox{for all }v,w\in H.
\end{equation}
This means that $G$ is self-adjoint as an operator on $H$. Choosing $v=w\in H$
in \eqref{3.G} gives $(G(v),v)_H=\|G(v)\|_V^2\ge 0$, i.e., 
$G$ is positive. We claim that $G$
is also one-to-one. Indeed, let $G(w)=0$ for some $w\in H$. Then
$0=(v,G(w))_V=(v,w)_H$ for all $v\in V$ and, by the density of the embedding 
$V\hookrightarrow H$, for all $v\in H$.
This implies that $w=0$ and shows the claim.

The properties on $G$ allow us to define $\Lambda:=G^{-1}:D(\Lambda)\to H$, 
where $D(\Lambda)=\operatorname{ran}(G)\subset V$ and $D(\Lambda)$ denotes the
domain of $\Lambda$. By definition, this operator satisfies
$$
  (v,\Lambda(w))_H = (v,w)_V\quad\mbox{for all }v\in V,\ w\in D(\Lambda).
$$
Hence, for all $v,w\in D(\Lambda)$, we have $(v,\Lambda(w))_H=(v,w)_V=(\Lambda(v),w)_H$,
i.e., $\Lambda$ is symmetric. 
Since $G=G^*$, we have $D(\Lambda^*)=\operatorname{ran}(G^*)
=\operatorname{ran}(G)=D(\Lambda)$ and consequently, $\Lambda$ is self-adjoint.
Moreover, $\Lambda$ is densely defined (since $V\hookrightarrow H$ is dense).
As a densely defined, self-adjoint operator, it is also closed.
Finally, $\Lambda$ is one-to-one and positive:
\begin{equation*}
  C\|\Lambda(v)\|_H\|v\|_V 
	\ge \|\Lambda(v)\|_H\|v\|_H \ge (\Lambda(v),v)_H = (v,v)_V = \|v\|_V^2\ge 0
\end{equation*}
for all $v\in D(\Lambda)$ and some $C>0$
and consequently, $\|\Lambda(v)\|_H\ge C^{-1}\|v\|_V$.

Therefore, we can define the square root of $\Lambda$, 
$\Lambda^{1/2}:D(\Lambda^{1/2})\to H$, which is densely defined and closed.
Its domain can be obtained by closing $D(\Lambda)$ with respect to
\begin{equation}\label{3.Lambda}
  \|\Lambda^{1/2}(v)\|_H = (\Lambda^{1/2}(v),\Lambda^{1/2}(v))_H^{1/2}
	= (\Lambda(v),v)_H^{1/2} = (v,v)_V^{1/2} = \|v\|_V
\end{equation}
for $v\in D(\Lambda^{1/2})$.
In particular, the graph norm $\|\cdot\|_H + \|\Lambda^{1/2}(\cdot)\|_H$ is equivalent
to the norm in $V$. We claim that $D(\Lambda^{1/2})=V$. To prove this, let $w\in V$
be orthogonal to $D(\Lambda^{1/2})$. 
Then $(w,v)_V=0$ for all $v\in D(\Lambda^{1/2})$ and, since
$D(\Lambda)\subset D(\Lambda^{1/2})$, in particular for all $v\in D(\Lambda)$. 
It follows that $0=(w,v)_V=(w,\Lambda(v))_H$ for $v\in D(\Lambda)$. 
Since $\Lambda$ is the inverse of $G:H\to V$, we have
$\operatorname{ran}(\Lambda)=H$, and it holds that $(w,\xi)_H=0$ for all $\xi\in H$,
implying that $w=0$. This shows the claim. 

Finally, we define $L:=\Lambda^{1/2}:D(L)=V\to H$, which is a positive and self-adjoint
operator. Estimate \eqref{3.Lambda} shows that
$\|L(v)\|_H = \|v\|_V$ for $v\in V$. 
We deduce from the equivalence between the norm in $V$ and the 
graph norm of $L$ that, for some $C>0$ and all $v\in V$,
$$
  \|v\|_V \le C(\|L(v)\|_H+\|v\|_H) = C(\|L(v)\|_V + \|L^{-1}L(v)\|_H)
	\le C(1+\|L^{-1}\|)\|L(v)\|_H,
$$
which proves the lower bound in \eqref{3.L}.
The dual operator $L^*:H\to V'$ is bounded too, since it holds for
all $w\in H$ that
$$
  \|L^*(w)\|_{V'} 
	= \sup_{\|v\|_V=1}|(w,L(v))_H| 
	\le \sup_{\|v\|_V=1}\|w\|_H\|v\|_V = \|w\|_H.
$$
This ends the proof.
\end{proof}

We apply Proposition \ref{prop.L} to $V=H^m_N(\dom)$ and $H=L^2(\dom)$, recalling
that $H^m_N(\dom)=\{v\in H^m(\dom):\na v\cdot\nu=0$ on $\pa\dom\}$ and $m>d/2+1$.
Then, by Sobolev's embedding, $D(L)\hookrightarrow W^{1,\infty}(\dom)$.
Observe the following two properties that are used later:
\begin{align}\label{3.LL}
  & \|L^*L(v)\|_{V'}\le \|v\|_V, \quad \|L^*(w)\|_{V'}\le \|w\|_H
	\quad\mbox{for all }v\in V,\ w\in H. 
\end{align}

The following lemma is used in the proof of
Proposition \ref{prop.ent} to apply It\^o's lemma.

\begin{lemma}[Operator $L^{-1}$]\label{lem.L1}
Let $L^{-1}:\operatorname{ran}(L)\to D(L)$ be the inverse of $L$ and let
$D(L^{-1}):=\overline{D(\Lambda)}$ be the closure of $D(\Lambda)$ 
with respect to $\|L^{-1}(\cdot)\|_H$.
Then $D(L)'$ is isometric to $D(L^{-1})$.
In particular, it holds that
$(L^{-1}(v),L^{-1}(w))_{H} = (v,w)_{D(L)'}$ for all $v$, $w\in D(L)'$.
\end{lemma}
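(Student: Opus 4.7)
My plan is to exhibit the isometry through the Riesz-type auxiliary operator $G: H \to V$ from the proof of Proposition \ref{prop.L}. Recall that $G = \Lambda^{-1}$ satisfies $(v, Gw)_V = (v, w)_H$ for all $v \in V$, $w \in H$, which identifies $Gv$ as the Riesz representative in $V$ of $v \in H$ regarded as an element of $V'$ via the Gelfand-triple embedding $\langle v, u\rangle_{V',V} = (v, u)_H$.

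The key computation is the chain
\[
  \|v\|_{V'} = \|Gv\|_V = \|L(Gv)\|_H = \|L^{-1}(v)\|_H \quad\text{for } v \in D(\Lambda),
\]
obtained by combining the Riesz identification, the isometry in \eqref{3.L}, and the identity $L(Gv) = L^{-1}(v)$, which follows from $L(L(Gv)) = \Lambda(Gv) = v$ together with the injectivity of $L$. As a byproduct, this shows that $L: V \to H$ is surjective, so $L^{-1}$ is legitimately defined on the whole of $H$. Polarization then gives $(v, w)_{V'} = (L^{-1}(v), L^{-1}(w))_H$ on $D(\Lambda)$.

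To conclude, I would close up by density. The set $D(\Lambda) = \operatorname{ran}(G)$ is dense in $V$ (this was used in the proof of Proposition \ref{prop.L} to identify $D(\Lambda^{1/2}) = V$), and the embeddings $V \hookrightarrow H \hookrightarrow V'$ are continuous and dense, so $D(\Lambda)$ is dense in $V'$. Hence $L^{-1}$ extends uniquely from $D(\Lambda)$ to an isometry $L^{-1}: V' = D(L)' \to H$, and by the very definition of $D(L^{-1})$ as the completion of $D(\Lambda)$ under $\|L^{-1}(\cdot)\|_H$, this yields the desired isometric identification $D(L)' \cong D(L^{-1})$ together with the polarized inner-product identity.

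The main point requiring care is the identification $L^{-1}(v) = L(Gv)$, because $L^{-1}$ is initially declared only on $\operatorname{ran}(L)$; however, the surjectivity of $L: V \to H$ drops out of the same computation via $L \circ L \circ G = \operatorname{id}_H$ on $D(\Lambda)$, so I do not anticipate any further obstacle beyond bookkeeping.
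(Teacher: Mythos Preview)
Your argument is correct and takes a genuinely different route from the paper. The paper proceeds by duality: it identifies the dual space $D(L^{-1})'$ with $D(L)$ by representing every bounded functional $F$ on $(D(\Lambda),\|L^{-1}(\cdot)\|_H)$ in the form $F(v)=(v,L^{-1}w)_H$ for some $w\in D(L)$, checks that the norms match, and then invokes reflexivity of Hilbert spaces to pass from $D(L^{-1})'\cong D(L)$ to $D(L^{-1})\cong D(L)'$. Your approach is more direct: you exploit the Riesz-type operator $G$ together with the isometry $\|L(\cdot)\|_H=\|\cdot\|_V$ from \eqref{3.L} and the algebraic identity $L\circ L\circ G=\Lambda\circ G=\mathrm{id}$ to obtain $\|v\|_{V'}=\|L^{-1}(v)\|_H$ on the dense set $D(\Lambda)$, and then close up. This avoids the detour through the double dual and makes the inner-product identity $(L^{-1}v,L^{-1}w)_H=(v,w)_{D(L)'}$ fall out immediately by polarization, whereas in the paper's sketch that identity is left somewhat implicit in the isometry statement. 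The one step worth making explicit in your write-up is that $Gv\in D(\Lambda)$ guarantees $L(Gv)\in D(L)$ (by the standard characterization $D(\Lambda)=\{w\in D(L):Lw\in D(L)\}$ for square roots), so that the composition $L(L(Gv))$ is legitimately formed; you mention this only obliquely under ``bookkeeping''.
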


\begin{proof}
The proof is essentially contained in \cite[p.~136ff]{KPS82} and we only sketch it.
Let $F\in D(L^{-1})'$. Then $|F(v)|\le C\|L^{-1}(v)\|_H$
for all $v\in D(\Lambda)$ and, as a consequence, $|F(Lu)|\le C\|u\|_H$ for 
$u=L^{-1}(v)\in D(L)$. The density of $L^{-1}(D(\Lambda))$ in $H$ guarantees the
unique representation $F(Lu)=(u,w)_H$ for some $w\in H$, and we can represent
$F$ in the form $F(v)=(L^{-1}v,w)_H=(v,L^{-1}w)_H$, where $L^{-1}w\in D(L)$.
This shows that every element of $D(L^{-1})'$ 
can be identified with an element of $D(L)$.

Conversely, if $w\in D(L)$, we consider
functionals of the type $v\mapsto(v,w)_H$ for $v\in D(\Lambda)$, which are
bounded in $\|L^{-1}(\cdot)\|_H$. These functionals can be extended by continuity
to functionals $F$ belonging to $D(L^{-1})'$. 
The proof in \cite[p.~137]{KPS82}
shows that $\|F\|_{D(L^{-1})'}=\|w\|_{D(L)}$. 
We conclude that $D(L^{-1})'$
is isometric to $D(L)$. Since Hilbert spaces are reflexive,
$D(L^{-1})$ is isometric to $D(L)'$.
\end{proof}


\begin{lemma}[Operator $u$]\label{lem.u}
The mapping $u:=(h')^{-1}$ from $D(L)$ to $L^\infty(\dom)$ is Fr\'echet 
differentiable and, as a mapping from $D(L)$ to $D(L)'$, monotone.
\end{lemma}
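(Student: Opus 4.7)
The plan is to exploit the explicit formula $u_i(w) = \exp(w_i/\pi_i)$, obtained by componentwise inversion of $h'_i(u) = \pi_i\log u_i$. Since $m > d/2+1$, Sobolev embedding gives the continuous inclusion $D(L) = H_N^m(\dom) \hookrightarrow L^\infty(\dom)$, so $u$ is well defined on $D(L)$ and takes values in $L^\infty(\dom) \subset L^2(\dom)$. Via the Gelfand triple $D(L) \hookrightarrow L^2(\dom) \hookrightarrow D(L)'$, I can interpret $u$ also as a map into $D(L)'$, with action
\[
  \langle u(w), \phi\rangle_{D(L)',D(L)} = \int_\dom u(w)\cdot \phi\,\dd x, \quad \phi \in D(L).
\]

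For Fréchet differentiability from $D(L)$ into $L^\infty(\dom)$, I would first show that $w \mapsto (\exp(w_i/\pi_i))_i$ is Fréchet differentiable as a map $L^\infty(\dom) \to L^\infty(\dom)$ by a pointwise Taylor expansion: for $v,w \in L^\infty(\dom)$,
\[
  \exp\bigl((w_i+v_i)/\pi_i\bigr) - \exp(w_i/\pi_i) - \frac{v_i}{\pi_i}\exp(w_i/\pi_i) = r_i(w,v),
\]
with $\|r_i(w,v)\|_{L^\infty} \le C(\|w\|_{L^\infty})\|v\|_{L^\infty}^2 = o(\|v\|_{L^\infty})$. Composing with the continuous embedding $D(L) \hookrightarrow L^\infty(\dom)$ transfers Fréchet differentiability to $u:D(L) \to L^\infty(\dom)$, with derivative $\DD u[w]v = (\pi_i^{-1}\exp(w_i/\pi_i)v_i)_{i=1}^n$, which is manifestly a bounded linear operator on $D(L)$.

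For monotonicity as a map $D(L) \to D(L)'$, I would evaluate the pairing directly via the integral representation above:
\[
  \langle u(w_1) - u(w_2), w_1-w_2\rangle_{D(L)',D(L)} = \sum_{i=1}^n \int_\dom \bigl(e^{w_{1,i}/\pi_i} - e^{w_{2,i}/\pi_i}\bigr)(w_{1,i}-w_{2,i})\,\dd x.
\]
Since $\pi_i > 0$ and $t \mapsto e^{t/\pi_i}$ is strictly increasing on $\R$, each factor in the integrand is nonnegative pointwise and monotonicity follows at once.

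The argument is essentially routine; the only place where care is required is the identification of $u$ as a $D(L)'$-valued map, which uses the Gelfand-triple inclusion $L^2(\dom) \hookrightarrow D(L)'$ to rewrite the abstract pairing as an ordinary integral. Once this setup is in place, Fréchet differentiability reduces to the smoothness of $\exp$ on $L^\infty$, and monotonicity reduces to the pointwise monotonicity of $\exp$.
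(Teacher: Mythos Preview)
Your proposal is correct and follows essentially the same approach as the paper: a second-order remainder estimate (you via Taylor expansion of $\exp$, the paper via the mean-value theorem applied to the abstract map $u=(h')^{-1}$) gives Fr\'echet differentiability $D(L)\to L^\infty(\dom)$, and pointwise monotonicity of the scalar map (you via $t\mapsto e^{t/\pi_i}$ increasing, the paper via convexity of $h$ and hence monotonicity of $h'$) gives operator monotonicity after identifying the duality pairing with the $L^2$ inner product. The only cosmetic difference is that the paper argues with the abstract properties of $h$ rather than the explicit exponential formula, which makes its proof reusable for more general entropies satisfying Hypothesis~(H1); your version is more concrete but equally valid here.
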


\begin{proof}
Let $w\in D(L)\hookrightarrow L^\infty(\dom)$ (here we use $m>d/2$). 
Then $u(w)=(x\mapsto u(w(x)))
\in L^\infty(\dom)$,
showing that $u:D(L)\to L^\infty(\dom)=(L^1(\dom))'\hookrightarrow D(L)'$ 
is well defined.
It follows from the mean-value theorem that for all $w$, $\xi\in D(L)$,
$$
  \|u(w+\xi)-u(w)-u'(w)\xi\|_{L^\infty(\dom)}
	\le C\|\xi\|_{D(L)}^2\bigg\|\int_0^1 (1-s)u''(w+s\xi)\dd s\bigg\|_{L^\infty(\dom)}.
$$
Since $u''$ maps bounded sets to bounded sets, the integral is bounded. Thus,
$u:D(L)\to L^\infty(\dom)$ is Fr\'echet differentiable.
For the monotonicity, we use the convexity of $h$ and hence the monotonicity of $h'$:
\begin{align*}
  \langle u(v)-u(w),v-w\rangle_{D(L)',D(L)}
	&= (u(v)-u(w),v-w)_{L^2(\dom)} \\
	&= (u(v)-u(w),h'(u(v))-h'(u(w)))_{L^2(\dom)} \ge 0
\end{align*}
for all $v$, $w\in D(L)$. This proves the lemma.
\end{proof}


\subsection{Definition of the regularization operator $R_\eps$}

First, we define another operator, \blue{denoted by $Q_{\eps}$}, that maps $D(L)$ to $D(L)'$. Its inverse
is the desired regularization operator.

\begin{lemma}[Operator $Q_\eps$]\label{lem.Qeps}
Let $\eps>0$ and define $Q_\eps:D(L)\to D(L)'$ by $Q_\eps(w)=u(w)+\eps L^*Lw$, where
$w\in D(L)$. Then $Q_\eps$ is Fr\'echet differentiable, strongly monotone,
coercive, and invertible. 
Its Fr\'echet derivative $\DD Q_\eps[w](\xi)=u'(w)\xi + \eps L^*L\xi$
for $w$, $\xi\in D(L)$ is continuous, strongly monotone, coercive, and
invertible.
\end{lemma}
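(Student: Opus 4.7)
The plan is to derive every property from three ingredients: the monotonicity of $u=(h')^{-1}$ recorded in Lemma~\ref{lem.u}, the Gelfand-triple identity $\langle L^*Lv,v\rangle_{D(L)',D(L)}=\|Lv\|_H^2=\|v\|_V^2$ coming from Proposition~\ref{prop.L}, and the Browder--Minty theorem for monotone operators on a reflexive Banach space. In particular, the $\eps L^*L$ term is designed precisely to contribute a clean quadratic coercive term to the monotonicity pairing, which will dominate everything else.

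For $Q_\eps$ itself, Fréchet differentiability will be immediate: $u:D(L)\to L^\infty(\dom)\hookrightarrow D(L)'$ is Fréchet differentiable by Lemma~\ref{lem.u}, while $\eps L^*L:D(L)\to D(L)'$ is linear and bounded by~\eqref{3.LL}, hence smooth with itself as derivative. To establish strong monotonicity I would pair the difference with $w-\widetilde w$:
\begin{equation*}
\langle Q_\eps(w)-Q_\eps(\widetilde w),\,w-\widetilde w\rangle_{D(L)',D(L)}
= \langle u(w)-u(\widetilde w),\,w-\widetilde w\rangle_{D(L)',D(L)} + \eps\,\|L(w-\widetilde w)\|_H^2
\ge \eps\,\|w-\widetilde w\|_V^2,
\end{equation*}
since the first summand is nonnegative by Lemma~\ref{lem.u} and the second equals $\eps\|w-\widetilde w\|_V^2$ thanks to~\eqref{3.L}. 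Coercivity then follows by taking $\widetilde w=0$ and absorbing the linear term $\|Q_\eps(0)\|_{D(L)'}\|w\|_V$ into the quadratic one. Invertibility is then a direct consequence of the Browder--Minty theorem applied to the continuous (hence hemicontinuous), monotone, coercive operator $Q_\eps$; strong monotonicity additionally yields injectivity and Lipschitz continuity of the inverse $R_\eps:=Q_\eps^{-1}$.

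The Fréchet derivative $\DD Q_\eps[w]\xi = u'(w)\xi + \eps L^*L\xi$ is handled in the same spirit, but it is actually easier because it is linear in $\xi$. Continuity as a map $D(L)\to D(L)'$ follows since $w\in D(L)\hookrightarrow L^\infty(\dom)$ makes $u'(w)=1/h''(u(w))$ a positive $L^\infty(\dom)$-function, so multiplication by $u'(w)$ is bounded on $L^2(\dom)\hookrightarrow D(L)'$, and $\eps L^*L$ is bounded by~\eqref{3.LL}. Testing against $\xi$ yields
\begin{equation*}
\langle \DD Q_\eps[w]\xi,\xi\rangle_{D(L)',D(L)}
= \int_\dom u'(w)\,\xi^2\,\dd x + \eps\,\|L\xi\|_H^2
\ge \eps\,\|\xi\|_V^2,
\end{equation*}
which simultaneously gives strong monotonicity and coercivity. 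Invertibility then follows from Lax--Milgram applied to the induced coercive bilinear form on $V=D(L)$.

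The main delicacy I anticipate is purely functional-analytic: making sure that $u(w)$ and $u'(w)\xi$ are correctly interpreted as elements of $D(L)'$ via the chain $L^\infty(\dom)\cdot L^2(\dom)\subset L^2(\dom)\hookrightarrow D(L)'$ and the $L^\infty$-valued derivative provided by Lemma~\ref{lem.u}, and that the pairings $\langle L^*Lv,v\rangle_{D(L)',D(L)}$ genuinely evaluate to the squared $V$-norm as claimed in Proposition~\ref{prop.L}. Once these identifications are nailed down, the proof becomes a textbook exercise in monotone-operator theory, exploiting the explicit quadratic coercivity supplied by the $\eps L^*L$ regularization.
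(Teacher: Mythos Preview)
Your proposal is correct and follows essentially the same route as the paper: monotonicity of $u$ from Lemma~\ref{lem.u} plus the coercivity of $\eps L^*L$ from~\eqref{3.L} give strong monotonicity, coercivity follows by setting $\widetilde w=0$, and invertibility is concluded via Browder--Minty (the paper cites Browder's theorem for both $Q_\eps$ and $\DD Q_\eps[w]$, whereas you invoke Lax--Milgram for the linear derivative, which is of course equivalent here). One cosmetic point: in the multi-species setting $u'(w)=(h'')^{-1}(u(w))$ is a positive semidefinite \emph{matrix}-valued $L^\infty$ function, not a scalar, so your integral $\int_\dom u'(w)\,\xi^2\,\dd x$ should read $\int_\dom \xi\cdot u'(w)\xi\,\dd x$; the argument is otherwise unchanged.
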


\begin{proof}
The mapping $Q_\eps$ is well defined since $w\in D(L)\hookrightarrow L^\infty(\dom)$
implies that $u(w)\in L^\infty(\dom)$ and hence, 
$\|u(w)\|_{D(L)'}\le C\|u(w)\|_{L^1(\dom)}$ is finite. We show that
$Q_\eps$ is strongly monotone. For this, let $v$, $w\in D(L)$ and compute
\begin{align}\label{3.aux}
  \langle Q_\eps&(v)-Q_\eps(w),v-w\rangle_{D(L)',D(L)} \\
	&= (u(v)-u(w),v-w)_H + \eps\langle L^*L(v-w),v-w\rangle_{D(L)',D(L)} \nonumber \\
	&\ge \eps\langle L^*L(v-w),v-w\rangle_{D(L)',D(L)}
	= \eps\|L(v-w)\|_H^2 \ge \eps c\|v-w\|_{D(L)}^2 \nonumber
\end{align}
where we used the monotonicity of $w\mapsto u(w)$
and the lower bound in \eqref{3.L}.
The coercivity of $Q_\eps$ is a consequence of the strong monotonicity:
\begin{align*}
  \langle Q_\eps(v),v\rangle_{D(L)',D(L)}
	&= \langle Q_\eps(v)-Q_\eps(0),v-0\rangle_{D(L)',D(L)}
	+ \langle Q_\eps(0),v\rangle_{D(L)',D(L)} \\
	&\ge \eps c\|v\|_{D(L)}^2 + (u(0),v)_H 
	\ge \eps c\|v\|_{D(L)}^2 - C|u(0)|\,\|v\|_{D(L)}
\end{align*}
for $v\in D(L)$. Based on these properties, the invertibility of $Q_\eps$ 
now follows from Browder's theorem \cite[Theorem 6.1.21]{DrMi13}.

Next, we show the properties for $\DD Q_\eps$. The operator 
$\DD Q_\eps[w]:D(L)\to D(L)'$ is well defined for all $w\in D(L)$, since
$$
  \|u'(w)\xi\|_{D(L)'}\le C\|u'(w)\xi\|_{L^2(\dom)}\le C\|u'(w)\|_{L^2(\dom)}
  \|\xi\|_{L^\infty(\dom)}\le C\|u'(w)\|_{L^2(\dom)}\|\xi\|_{D(L)}
$$ 
for all $\xi\in D(L)\hookrightarrow L^\infty(\dom)$.
The strong monotonicity of $\DD Q_\eps[w]$ for $w\in D(L)$ follows from
the positive semidefiniteness of $u'(w)=(h'')^{-1}(u(w))$ and the 
lower bound in \eqref{3.L}:
\begin{align*}
  \langle \DD & Q_\eps[w](\xi)-\DD Q_\eps[w](\eta),\xi-\eta\rangle_{D(L)',D(L)} \\
	&= (u'(w)(\xi-\eta),\xi-\eta)_H 
	+ \eps\langle L^*L(\xi-\eta),\xi-\eta\rangle_{D(L)',D(L)} \\
	&\ge \eps\|L(\xi-\eta)\|_H^2 \ge \eps c\|\xi-\eta\|_{D(L)}^2
\end{align*}
for $\xi$, $\eta\in D(L)$.
The choice $\eta=0$ yields immediately the coercivity of $\DD Q_\eps[w]$.
The invertibility of $\DD Q_\eps[w]$ follows again from Browder's theorem. 
\end{proof}

Lemma \ref{lem.Qeps} shows that the inverse of $Q_\eps$ exists. We set
$R_\eps:=Q_\eps^{-1}:D(L)'\to D(L)$, which is the desired regularization
operator. It has the following properties.

\begin{lemma}[Operator $R_\eps$]\label{lem.Reps}
The operator $R_\eps:D(L)'\to D(L)$ is Fr\'echet differentiable and strictly monotone.
In particular, it is Lipschitz continuous with Lipschitz constant
$C/\eps$, where $C>0$ does not depend on $\eps$. The Fr\'echet derivative \blue{is also Lipschitz continuous with the same constant and satisfies}
$$
  \DD R_\eps[v] = (\DD Q_\eps[R_\eps(v)])^{-1}
	= (u'(R_\eps(v))+\eps L^*L)^{-1} \quad\mbox{for }v\in D(L)',
$$
and it is Lipschitz continuous with constant $C/\eps$, satisfying
$\|\DD R_\eps[v](\xi)\|_{D(L)}\le \eps^{-1}C\|\xi\|_{D(L)'}$
for $v$, $\xi\in D(L)'$.
\end{lemma}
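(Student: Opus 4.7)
The plan is to derive each claim from the corresponding property of $Q_\eps$ established in Lemma \ref{lem.Qeps}, with $R_\eps = Q_\eps^{-1}$ handled via a direct monotonicity argument plus a small perturbation calculation for differentiability.

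First I would establish the global Lipschitz bound and strict monotonicity of $R_\eps$ simultaneously. For $v_1, v_2 \in D(L)'$, put $w_i = R_\eps(v_i) \in D(L)$, so that $Q_\eps(w_i) = v_i$, and pair $v_1 - v_2 = Q_\eps(w_1) - Q_\eps(w_2)$ with $w_1 - w_2$. The strong monotonicity of $Q_\eps$ from Lemma \ref{lem.Qeps} yields
$$
\eps c\,\|w_1 - w_2\|_{D(L)}^2 \le \langle v_1 - v_2, w_1 - w_2\rangle_{D(L)',D(L)} \le \|v_1 - v_2\|_{D(L)'}\,\|w_1 - w_2\|_{D(L)},
$$
which gives the Lipschitz constant $(\eps c)^{-1}$ and, since $R_\eps$ is a bijection, strict monotonicity.

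For differentiability I would argue directly. Given $v \in D(L)'$ and small $\xi \in D(L)'$, set $w = R_\eps(v)$ and $\Delta w = R_\eps(v+\xi) - w$; by the previous step $\|\Delta w\|_{D(L)} \le C\eps^{-1}\|\xi\|_{D(L)'}$. Since $Q_\eps$ is Fréchet differentiable at $w$,
$$
\xi = Q_\eps(w + \Delta w) - Q_\eps(w) = \DD Q_\eps[w](\Delta w) + r(\xi),
$$
where $\|r(\xi)\|_{D(L)'} = o(\|\Delta w\|_{D(L)}) = o(\|\xi\|_{D(L)'})$ as $\xi \to 0$. Since $\DD Q_\eps[w]$ is invertible (Lemma \ref{lem.Qeps}), and the very same monotonicity pairing applied to $\DD Q_\eps[w]$ in place of $Q_\eps$ gives $\|(\DD Q_\eps[w])^{-1}\|_{\mathcal{L}(D(L)',D(L))} \le C\eps^{-1}$, one concludes $\Delta w = (\DD Q_\eps[w])^{-1}(\xi) + o(\|\xi\|_{D(L)'})$. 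This is Fréchet differentiability together with the formula
$$
\DD R_\eps[v] = (\DD Q_\eps[R_\eps(v)])^{-1} = (u'(R_\eps(v)) + \eps L^*L)^{-1},
$$
and the stated operator bound $\|\DD R_\eps[v](\xi)\|_{D(L)} \le (\eps c)^{-1}\|\xi\|_{D(L)'}$ is exactly the bound on the inverse of $\DD Q_\eps$ just used.

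The final and most delicate step is the Lipschitz dependence $v \mapsto \DD R_\eps[v]$. For fixed $\xi \in D(L)'$ and $v_1, v_2 \in D(L)'$, put $\eta_i = \DD R_\eps[v_i](\xi)$, i.e.\ $u'(w_i)\eta_i + \eps L^*L\eta_i = \xi$, and subtract to get
$$
\DD Q_\eps[w_2](\eta_1 - \eta_2) = -\bigl(u'(w_1) - u'(w_2)\bigr)\eta_1.
$$
Testing with $\eta_1 - \eta_2$ and invoking the strong monotonicity of $\DD Q_\eps[w_2]$ reduces the claim to a Lipschitz estimate for $w \mapsto u'(w)$. This is the main obstacle, because $u_i(w) = \exp(w_i/\pi_i)$ depends exponentially on $w$, so $u'$ is only \emph{locally} Lipschitz on $D(L)$. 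To close the argument I would exploit the embedding $D(L) \hookrightarrow L^\infty(\dom)$ (valid since $m > d/2$), the smoothness of $u' = (h'')^{-1}\circ u$ on bounded subsets of $L^\infty(\dom)$, the $D(L)$-bound $\|w_i\|_{D(L)} \le C\eps^{-1}\|v_i\|_{D(L)'}$ from the first step, and the operator bound on $\eta_1$ already proved, to obtain a Lipschitz estimate for $\DD R_\eps$ of the form $C/\eps$, with the constant $C$ tracking the local $D(L)'$-norms of $v_1$ and $v_2$ through the exponential growth of $u'$.
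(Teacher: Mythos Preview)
Your proof of the Lipschitz bound, strict monotonicity, differentiability, and the uniform operator bound $\|\DD R_\eps[v](\xi)\|_{D(L)}\le (\eps c)^{-1}\|\xi\|_{D(L)'}$ is correct and matches the paper's approach almost line for line; the only cosmetic difference is that the paper cites the inverse function theorem for differentiability where you unpack the perturbation argument by hand.

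Your final paragraph, however, is based on a misreading of the statement. The phrase ``it is Lipschitz continuous with constant $C/\eps$'' refers to the linear map $\xi\mapsto \DD R_\eps[v](\xi)$ for each fixed $v$---which for a linear operator is simply the uniform operator-norm bound you already established---not to the dependence $v\mapsto \DD R_\eps[v]$. The clarifying clause ``satisfying $\|\DD R_\eps[v](\xi)\|_{D(L)}\le \eps^{-1}C\|\xi\|_{D(L)'}$'' makes this explicit, and the paper's own proof confirms it: it derives exactly this bound from the strong monotonicity of $\DD Q_\eps[w]$ and stops there. So your ``most delicate step'' is not part of the lemma at all, and the obstacle you identify (only local Lipschitz continuity in $v$ because $u'(w)$ grows exponentially) is real but irrelevant---the paper never claims or uses global Lipschitz dependence of $\DD R_\eps$ on $v$.
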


\begin{proof}
We show first the Lipschitz continuity of $R_\eps$. 
Let $v_1$, $v_2\in D(L)'$. Then there exist
$w_1$, $w_2\in D(L)$ such that $v_1=Q_\eps(w_1)$, $v_2=Q_\eps(w_2)$. Hence,
using \eqref{3.L} and \eqref{3.aux},
\begin{align*}
  \|R_\eps(v_1)-R_\eps(v_2)\|_{D(L)}^2
	&= \|w_1-w_2\|_{D(L)}^2 \le C\|L(w_1-w_2)\|_H^2 \\
	&\le \eps^{-1}C\langle Q_\eps(w_1)-Q_\eps(w_2),w_1-w_2\rangle_{D(L)',D(L)} \\
	&\le \eps^{-1}C\|Q_\eps(w_1)-Q_\eps(w_2)\|_{D(L)'}\|w_1-w_2\|_{D(L)} \\
	&= \eps^{-1}C\|v_1-v_2\|_{D(L)'}\|R_\eps(v_1)-R_\eps(v_2)\|_{D(L)},
\end{align*}
proving that $R_\eps$ is Lipschitz continuous with Lipschitz constant $C/\eps$.
The Fr\'echet differentiability is a consequence of the inverse function theorem
and $\DD R_\eps[v] = (\DD Q_\eps[R_\eps(v)])^{-1}$ for $v\in D(L)'$.

We verify the strict monotonicity of $R_\eps$. Let $v$, $w\in D(L)'$
with $v\neq w$. Because of the strong monotonicity of $Q_\eps$, we have
\begin{align*}
  \langle v-w,R_\eps(v)-R_\eps(w)\rangle_{D(L)',D(L)}
	&= \langle Q_\eps(R_\eps(v))-Q_\eps(R_\eps(w)),R_\eps(v)-R_\eps(w)
	\rangle_{D(L)',D(L)} \\
	&\ge \eps^{-1} c\|R_\eps(v)-R_\eps(w)\|_{D(L)}^2 > 0,
\end{align*}
and the right-hand side vanishes only if $v=w$, since $R_\eps$ is one-to-one.

Next, we show that $\DD R_\eps[v]$ is Lipschitz continuous. Let
$w_1$, $w_2\in D(L)$. By Lemma \ref{lem.Qeps}, $\DD Q_\eps[w]$ is strongly
monotone. Thus, for any $w\in D(L)$,
\begin{align*}
  \eps c\|w_1-w_2\|_{D(L)}^2 &\le \langle\DD Q_\eps[w](w_1)-\DD Q_\eps[w](w_2),
	w_1-w_2\rangle_{D(L)',D(L)} \\
	&\le \|\DD Q_\eps[w](w_1)-\DD Q_\eps[w](w_2)\|_{D(L)'}\|w_1-w_2\|_{D(L)}.
\end{align*}
Let $v_1=\DD Q_\eps[w](w_1)$ and $v_2=\DD Q_\eps[w](w_2)$. We infer that
\begin{align*}
  \|(\DD & Q_\eps[w])^{-1}(v_1)-(\DD Q_\eps[w])^{-1}(v_2)\|_{D(L)}
	= \|w_1-w_2\|_{D(L)} \\
	&\le \eps^{-1}C\|\DD Q_\eps[w](w_1)-\DD Q_\eps[w](w_2)\|_{D(L)'}
	= \eps^{-1}C\|v_1-v_2\|_{D(L)'},
\end{align*}
showing the Lipschitz continuity of $(\DD Q_\eps[w])^{-1}$ and
$\DD R_\eps[v] = (\DD Q_\eps[R_\eps(v)])^{-1}$. Finally, choosing $w=R_\eps[v]$
and $v_2=0$, $\|\DD R_\eps[v](v_1)\|_{D(L)}\le \eps^{-1}C\|v_1\|_{D(L)'}$.
\end{proof}


\section{Existence of approximate solutions}\label{sec.approx}

In the previous section, we have introduced the regularization operator
$R_\eps:D(L)'\to D(L)$. The entropy variable $w$ is replaced by the regularized
variable $R_\eps(v)$ for $v\in D(L)'$. 
Setting $v = u(R_\eps(v)) + \eps L^*LR_\eps(v)$, we
consider the regularized problem
\begin{align}\label{4.approx}
  & \dd v = \diver\big(B(R_\eps(v))\na R_\eps(v)\big)\dd t 
	+ \sigma\big(u(R_\eps(v))\big)\dd W(t) \quad\mbox{in }\dom,\ t\in[0,T\wedge\tau), \\
	& v(0) = u^0\quad\mbox{in }\dom, \quad 
	\na R_\eps(v)\cdot\nu = 0 \quad\mbox{on }
	\pa\dom,\ t>0, \label{4.bic}
\end{align}
recalling that $B(w)=A(u(w))h''(u(w))^{-1}$ for $w\in\R^n$.

We clarify the notion of solution to problem \eqref{4.approx}--\eqref{4.bic}.
Let $T>0$, let $\tau$ be an $\F$-adapted stopping time,
and let $v$ be a continuous, $D(L)'$-valued, $\F$-adapted process. We call $\blue{(v,\tau)}$
a {\em local \blue{strong} solution} to \eqref{4.approx} if
$$
  v(\omega,\cdot,\cdot)\in L^2([0,T\wedge\tau(\omega));D(L)')\cap
	C^0([0,T\wedge\tau(\omega));D(L)')
$$
for a.e.\ $\omega\in\Omega$ and for all $t\in[0,T\wedge\tau)$,
\begin{align}
  & v(t) = v(0) + \int_0^t\diver\big(B(R_\eps(v(s)))\na R_\eps(v(s))\big)\dd s
	+ \int_0^t\sigma\big(u(R_\eps(v(s))\big)\dd W(s), \label{4.defsol} \\
	& \na R_\eps(v)\cdot\nu = 0\quad\mbox{on }\pa\dom\quad
	\Prob\mbox{-a.s.} \label{4.bc}
\end{align}
It can be verified that $R_\eps$ is strongly measurable and, if $v$ is 
progressively measurable, also progressively measurable. Furthermore, if
$w$ is progressively measurable then so does $u(w)$, and if $v\in C^0([0,T];D(L)')$,
we have $R_\eps(v)\in C^0([0,T];D(L))$ and $u(R_\eps(v))\in L^\infty(Q_T)$. 
Finally, if $v\in L^0(\Omega;L^p(0,T;D(L)'))$ for $1\le p\le\infty$, then
$\diver(B(u(R_\eps(v)))\na R_\eps(v))\in L^0(\Omega;L^p(0,T;D(L)')))$.
Therefore, the integrals in \eqref{4.defsol} are well defined.
The local \blue{strong} solution is called a {\em global \blue{strong} solution} 
if $\Prob(\tau=\infty)=1$.
Given $t>0$ and a process $v\in L^2(\Omega;C^0([0,t];D(L)'))$, we introduce the 
stopping time
$$
  \tau_R := \inf\{s\in[0,t]:\|v(s)\|_{D(L)'}>R\}\quad\mbox{for }R>0.
$$ 
\blue{The stopping time $\tau_{R}$ is $\Prob$-a.s.\ positive}. Indeed, by Chebychev's inequality, 
it holds for $\delta>0$ that
$$
  \Prob(\tau_R>\delta) \ge \Prob\Big(\sup_{0<t<\delta}\|v(t\wedge\tau_R)\|_{D(L)'}
	\le R\Big) \ge 1 - \frac{1}{R^2}\E\sup_{0<t<\delta}
	\|v(t\wedge\tau_R)\|_{D(L)'}^2.
$$
Then, inserting \eqref{4.defsol} and using the properties of the operators
introduced in Section \ref{sec.op}, we can show that
$\Prob(\tau_R>\delta)\ge 1-C(\delta)$, where $C(\delta)\to 0$ as $\delta\to 0$,
which proves the claim.

We impose the following general assumptions.

\begin{labeling}{(A44)} 

\item[(H1)] Entropy density: Let $\simplex\subset\R^n$ be a domain and let
$h\in C^2(\simplex;[0,\infty))$ be such that $h':\simplex\to\R^n$ and
$h''(u)\in\R^{n\times n}$ for $u\in\simplex$ are invertible and
there exists $C>0$ such that $|u|\le C(1+h(u))$ for all $u\in\simplex$.

\item[(H2)] Initial datum: $u^0=(u_1^0,\ldots,u_n^0)\in 
L^\infty(\Omega;L^2(\dom;\R^n))$ is 
$\mathcal{F}_0$-measurable satisfying $u^0(x)\in\simplex$ for a.e.\ $x\in\dom$
$\Prob$-a.s.

\item[(H3)] Diffusion matrix: $A=(A_{ij})\in C^1(\overline\dom;\R^{n\times n})$
grows at most linearly and the matrix $h''(u)A(u)$ is positive semidefinite for
all $u\in\simplex$.
\end{labeling}

\begin{remark}[Discussion of the assumptions]\rm 
Hypothesis (H1) and the positive semidefiniteness condition of 
\blue{$h''(u)A(u)$} in (H3) are necessary for the entropy structure
of the general cross-diffusion system. The entropy density \eqref{1.h} 
with $\simplex=(0,\infty)^n$ satisfies 
Hypothesis (H1), and the diffusion matrix \eqref{1.SKT} fulfills (H3).
The differentiability of $A$ is needed to apply \cite[Prop.~4.1.4]{LiRo15}
(stating that the assumptions of the abstract existence Theorem 4.2.2 are
satisfied)
and can be weakened to continuity, weak monotonicity, and coercivity conditions. 
The growth condition for $A$ is technical;
it guarantees that the integral formulation associated to \eqref{1.eq} is well defined.
Hypothesis (H2) guarantees that $h(u^0)$ is well defined.
\qed\end{remark}

We consider general approximate stochastic cross-diffusion systems, since
the existence result for \eqref{4.approx} may be
useful also for other stochastic cross-diffusion systems.

\begin{theorem}[Existence of approximate solutions]\label{thm.approx}\
Let Assumptions (A1)--(A2), (A4)--(A5), (H1)--(H3) be satisfied and 
let $\eps>0$, $R>0$.
Then problem \eqref{4.approx}--\eqref{4.bic} has a unique local solution 
$\blue{(v^\eps,\tau_R)}$.
\end{theorem}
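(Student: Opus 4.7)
My plan is to apply the abstract variational existence theorem \cite[Theorem 4.2.4]{LiRo15} to a globally Lipschitz truncation of \eqref{4.approx}--\eqref{4.bic}, and then to recover a local solution of the original equation on $[0,T\wedge\tau_R)$. I would work in the Gelfand triple $D(L)\hookrightarrow H\hookrightarrow D(L)'$, where the Hilbert pivot $H$ is chosen compatibly with the isometry of Lemma \ref{lem.L1} (so that $\|\cdot\|_{D(L)'}$ is induced by an inner product). Choose a smooth cutoff $\chi_R\in C^\infty(\R;[0,1])$ with $\chi_R\equiv 1$ on $[0,R]$ and $\chi_R\equiv 0$ on $[2R,\infty)$, and consider the truncated SPDE
\begin{equation*}
\dd v=\chi_R(\|v\|_{D(L)'})\,\diver\big(B(R_\eps(v))\na R_\eps(v)\big)\,\dd t+\chi_R(\|v\|_{D(L)'})\,\sigma\big(u(R_\eps(v))\big)\,\dd W
\end{equation*}
with $v(0)=u^0$. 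Once a unique $D(L)'$-valued global solution $v^\eps$ of the truncated equation is in hand, the stopping time $\tau_R:=\inf\{t\in[0,T]:\|v^\eps(t)\|_{D(L)'}>R\}$ satisfies $\chi_R(\|v^\eps(\cdot)\|_{D(L)'})\equiv 1$ on $[0,T\wedge\tau_R)$, so $(\tau_R,v^\eps)$ is a local weak solution in the sense of \eqref{4.defsol}--\eqref{4.bc}; positivity of $\tau_R$ has already been argued by the Chebychev bound preceding the theorem.

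The verification of the hypotheses of \cite[Theorem 4.2.4]{LiRo15}, namely hemicontinuity, (local) monotonicity, coercivity, and growth, reduces to showing that the truncated drift and diffusion are globally Lipschitz as maps from $D(L)'$ to $D(L)'$ and to $\mathcal{L}_2(U;L^2(\dom))$, respectively. The key ingredients are: Lemma \ref{lem.Reps}, which gives that $R_\eps:D(L)'\to D(L)$ is Lipschitz with constant $C/\eps$; the Sobolev embedding $D(L)=H_N^m(\dom)\hookrightarrow W^{1,\infty}(\dom)$, valid since $m>d/2+1$, which turns this into uniform $L^\infty$-bounds on $R_\eps(v)$ and $\na R_\eps(v)$ whenever $\|v\|_{D(L)'}\le 2R$; Lemma \ref{lem.u} together with hypothesis (H1) for the smooth dependence of $u=(h')^{-1}$ on $w$; the $C^1$-regularity and linear growth of $A$ from (H3); and the Lipschitz and linear growth bounds on $\sigma$ from (A4). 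Global Lipschitz continuity immediately yields weak monotonicity via the elementary estimate $\langle\mathcal{A}(v_1)-\mathcal{A}(v_2),v_1-v_2\rangle_{D(L)',D(L)}\ge -C(R,\eps)\|v_1-v_2\|_{D(L)'}^2$; coercivity and the growth bound are trivial after truncation because the coefficients are then bounded; and hemicontinuity is immediate. The abstract theorem then produces the desired unique $\F$-adapted global solution $v^\eps$ of the truncated equation, and uniqueness of the local solution $(\tau_R,v^\eps)$ follows.

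The hard part will be the Lipschitz bound for the divergence-form drift $v\mapsto\diver(B(R_\eps(v))\na R_\eps(v))$ in the $D(L)'$ norm. After passing through $R_\eps$ one lands in $D(L)\hookrightarrow W^{1,\infty}(\dom)$, so the product $B(R_\eps(v))\na R_\eps(v)$ is uniformly bounded in $L^\infty(\dom)$ on the truncation region, and its divergence defines a bounded functional on $D(L)$ via integration by parts using the no-flux condition $\na R_\eps(v)\cdot\nu=0$ encoded in $D(L)=H_N^m(\dom)$. The estimate for differences would then reduce to routine product-rule bookkeeping, combined with the fact that $\|\cdot\|_{D(L)'}$ is $1$-Lipschitz so that $\chi_R(\|\cdot\|_{D(L)'})$ inherits a global Lipschitz bound. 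All constants degenerate as $\eps\to 0$, which is harmless here since $\eps>0$ is fixed throughout this theorem; the de-regularization limit is handled in later sections.
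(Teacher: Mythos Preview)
Your strategy is correct and close to the paper's. The paper also applies \cite[Theorem 4.2.4]{LiRo15} with the same ingredients (Lipschitz continuity of $R_\eps$ from Lemma \ref{lem.Reps}, the embedding $D(L)\hookrightarrow W^{1,\infty}(\dom)$, the $C^1$ regularity and linear growth of $A$, and Assumption (A4) for $\sigma$), but instead of an explicit cutoff it verifies the hypotheses only on the ball $\{\|v\|_{D(L)'}\le K\}$---specifically, Fr\'echet differentiability of the drift $M(v)=\diver(B(R_\eps(v))\na R_\eps(v))$ with locally bounded derivative, yielding the one-sided condition $(\xi,\DD M[v]\xi-c\xi)_{D(L)'}\le 0$---and then argues that an inspection of the Galerkin proof of the abstract theorem shows local bounds suffice for a local solution up to $\tau_R$. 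Your truncation makes this last step self-contained, which is a modest gain in rigor at the cost of some bookkeeping when showing that $v\mapsto\chi_R(\|v\|_{D(L)'})M(v)$ is globally Lipschitz.

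One small correction: the Gelfand triple is superfluous here. The approximate equation is simply an SDE in the single Hilbert space $D(L)'$ (drift $D(L)'\to D(L)'$, diffusion $D(L)'\to\L_2(U;D(L)')$ via $L^2(\dom)\hookrightarrow D(L)'$), which is exactly how the paper frames it; accordingly your weak-monotonicity estimate should use the inner product $(\cdot,\cdot)_{D(L)'}$ rather than the duality pairing $\langle\cdot,\cdot\rangle_{D(L)',D(L)}$, since both $\mathcal A(v_1)-\mathcal A(v_2)$ and $v_1-v_2$ live in $D(L)'$.
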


\begin{proof}
We want to apply Theorem 4.2.4 and Proposition 4.1.4 of \cite{LiRo15}.
To this end, we need to verify that the operator
$M:D(L)'\to D(L)'$, $M(v) := \diver(B(R_\eps(v))\na R_\eps(v))$, is Fr\'echet
differentiable and has at most linear growth, $\DD M[v]-cI$ is negative semidefinite
for all $v\in D(L)'$ and some $c>0$, and $\sigma$ is Lipschitz continuous.
 
By the regularity of the matrix $A$ and the entropy density $h$,
the operator $D(L)\to D(L)'$, $w\mapsto\diver(B(w)\na w)$, is Fr\'echet
differentiable. Then the Fr\'echet differentiability of $R_\eps$ 
(see Lemma \ref{lem.Reps}) and the chain rule imply that the operator
$M$ is also Fr\'echet differentiable with derivative
$$
  \DD M[v](\xi) = \diver\big(\DD B[R_\eps(v)](\DD R_\eps[v](\xi))
	\na R_\eps(v)\big) + \diver\big(B(R_\eps(v))\na\DD R_\eps[v](\xi)\big),
$$
where $v$, $\xi\in D(L)'$. We claim that this derivative is
locally bounded, i.e.\ if $\|v\|_{D(L)'}\le K$ then 
$\|\DD M[v](\xi)\|_{D(L)'}\le C(K)\|\xi\|_{D(L)'}$.
For this, we deduce from the Lipschitz continuity of $R_\eps$
(Lemma \ref{lem.Reps}) and the property $u(R_\eps(v))\in L^\infty(\dom)$ 
for $v\in D(L)'$ that
$$
  \|B(R_\eps(v))\|_{L^\infty(\dom)} + \|\DD B[R_\eps(v)]\|_{L^\infty(\dom)}
	\le C(1+\|R_\eps(v)\|_{D(L)}) \le C(\eps)(1+\|v\|_{D(L)'}),
$$
where $\DD B[R_\eps(v)]$ is interpreted as a matrix.
Recalling from Lemma \ref{lem.Reps} that
$$
  \|\DD R_\eps[v](\xi)\|_{D(L)}\le C(\eps)\|\xi\|_{D(L)'}
	\quad\mbox{for all }\xi\in D(L)',
$$
we obtain for $\|v\|_{D(L)'}\le K$ and $\xi\in D(L)'$:
\begin{align*}
  \|\DD M[v](\xi)\|_{D(L)'}
	&\le C\big\|\DD B[R_\eps(v)](\DD R_\eps[v](\xi))\na R_\eps(v)
	+ B(R_\eps(v))\na\DD R_\eps[v](\xi)\big\|_{L^1(\dom)} \\
	&\le C\|\DD B[R_\eps(v)](\DD R_\eps[v](\xi))\|_{L^\infty(\dom)}
	\|\na R_\eps(v)\|_{L^1(\dom)} \\
	&\phantom{xx}{}
	+ C\|B(R_\eps(v))\|_{L^\infty(\dom)}\|\na\DD R_\eps[v](\xi)\|_{L^1(\dom)} \\
	&\le C\|\DD B[R_\eps(v)]\|_{L^\infty(\dom)}\|\DD R_\eps[v](\xi)\|_{D(L)}
	\|R_\eps(v)\|_{D(L)} \\
	&\phantom{xx}{}+ C\|B(R_\eps(v))\|_{L^\infty(\dom)}
	\|\DD R_\eps[v](\xi)\|_{D(L)} \\
	&\le C(\eps)(1+\|v\|_{D(L)'})\|\xi\|_{D(L)'}\le C(\eps,K)\|\xi\|_{D(L)'}.
\end{align*}
This proves the claim.
Thus, if $\|v\|_{D(L)'}\le K$, there exists $c>0$ such that
$$
  (\xi,\DD M[v](\xi)- c\xi)_{D(L)'}\le 0\quad\mbox{for }\xi\in D(L)'.
$$
Moreover, by Lemma \ref{lem.Reps} again,
\begin{align*}
  \|M(v)\|_{D(L)'} &\le C\|B(R_\eps(v))\na R_\eps(v)\|_{L^1(\dom)}
	\le C\|\na R_\eps(v)\|_{L^1(\dom)} \\
	&\le C\|R_\eps(v)\|_{D(L)} \le \eps^{-1}C(1+\|v\|_{D(L)'}).
\end{align*}
It follows from Assumption (A4) and Lemma \ref{lem.u} that for 
$v$, $\bar v\in D(L)'$ with  $\|v\|_{D(L)'}\le K$ and $\|\bar v\|_{D(L)'}\le K$, 
\begin{align*}
  \|\sigma&(u(R_\eps(v)))-\sigma(u(R_\eps(\bar v)))\|_{\mathcal{L}_2(U;D(L)')}
  \le C\|\sigma(u(R_\eps(v)))-\sigma(u(R_\eps(\bar v)))
	\|_{\mathcal{L}_2(U;L^2(\dom))} \\
	&\le C(K)\|u(R_\eps(v)))-u(R_\eps(\bar v))\|_{L^2(\dom)} \\
	&\le C(K)\|R_\eps(v)-R_\eps(\bar v)\|_{D(L)}
	\le C(\eps,K)\|v-\bar v\|_{D(L)'},
\end{align*}
where $C(K)$ also depends on the $L^\infty(\dom)$ norms of $u'(R_\eps(v))$
and $u'(R_\eps(\bar v))$. 

These estimates show that the assumptions of \blue{\cite[Theorem 4.2.4]{LiRo15}}
are satisfied in the ball $\{v\in D(L)':\|v\|_{D(L)'}\le K\}$. 
An inspection of the proof of that theorem, which is based on the Galerkin
method and It\^o's lemma, shows that {\em local} bounds are sufficient to
conclude the existence of a {\em local} solution $v$ up to the stopping time $\tau_R$. 
The boundary conditions follow from $R_\eps(v)\in D(L)=H^m_N(\dom)$ and the
definition of the space $H^m_N(\dom)$.
\end{proof}

For the entropy estimate we need two technical lemmas whose proofs are deferred to 
Appendix \ref{sec.proofs}.

\begin{lemma}\label{lem.ab}
Let $w\in D(L)$, $a=(a_{ij})\in L^1(\dom;\R^{n\times n})$, and 
$b=(b_{ij})\in D(L)^{n\times n}$ satisfying $\DD R_\eps[w](a) = b$.
Then
$$
	\int_\dom a:b\dd x 
	\le \int_\dom\operatorname{tr}[a^T u'(w)^{-1}a]\dd x.
$$
\end{lemma}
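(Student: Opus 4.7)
The plan is to combine testing the defining equation against $b$ with a pointwise weighted Young inequality. By Lemma~\ref{lem.Reps}, the hypothesis $\DD R_\eps[w](a)=b$ is equivalent to $a = u'(w)b + \eps L^*L b$ in $D(L)'$, column by column (with $a_{\cdot j}$, $b_{\cdot j}$ denoting the $j$-th columns). Since $b_{\cdot j}\in D(L)$ and $a\in L^1(\dom;\R^{n\times n})\hookrightarrow D(L)'$, the dual pairing $\langle a_{\cdot j},b_{\cdot j}\rangle_{D(L)',D(L)}$ reduces to an ordinary $L^1$--$L^\infty$ integral. Testing the equation against $b_{\cdot j}$ and summing over $j$ yields
\begin{equation*}
  \int_\dom a:b\,\dd x
  = \int_\dom \operatorname{tr}\big(b^T u'(w) b\big)\,\dd x + \eps\sum_{j=1}^n\|Lb_{\cdot j}\|_H^2,
\end{equation*}
where I used $\langle L^*Lb_{\cdot j},b_{\cdot j}\rangle_{D(L)',D(L)}=\|Lb_{\cdot j}\|_H^2$ from Proposition~\ref{prop.L}. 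Dropping the non-negative $L$-term gives the lower bound
\begin{equation*}
  \int_\dom \operatorname{tr}\big(b^T u'(w) b\big)\,\dd x \;\le\; \int_\dom a:b\,\dd x.
\end{equation*}

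For the second step I will use that $u'(w)(x)=h''(u(w(x)))^{-1}$ is pointwise symmetric positive definite (from the convexity of $h$ in (H1); for the SKT entropy \eqref{1.h} one has $u'(w)=\operatorname{diag}(u_i/\pi_i)$). The weighted Cauchy--Schwarz inequality
\begin{equation*}
  \alpha\cdot\beta = (M^{1/2}\alpha)\cdot(M^{-1/2}\beta)
  \le \tfrac{1}{2}\alpha^T M\alpha + \tfrac{1}{2}\beta^T M^{-1}\beta,
\end{equation*}
applied columnwise with $M=u'(w(x))$, $\alpha=b_{\cdot j}(x)$, $\beta=a_{\cdot j}(x)$, gives pointwise a.e.\ the matrix inequality
\begin{equation*}
  a:b \;\le\; \tfrac{1}{2}\operatorname{tr}\big(b^T u'(w) b\big)
  + \tfrac{1}{2}\operatorname{tr}\big(a^T u'(w)^{-1} a\big).
\end{equation*}

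Integrating this pointwise bound over $\dom$ and substituting the first-step lower bound $\int_\dom \operatorname{tr}(b^T u'(w)b)\dd x\le\int_\dom a:b\,\dd x$ gives
\begin{equation*}
  \int_\dom a:b\,\dd x \;\le\; \tfrac{1}{2}\int_\dom a:b\,\dd x
  + \tfrac{1}{2}\int_\dom \operatorname{tr}\big(a^T u'(w)^{-1} a\big)\,\dd x,
\end{equation*}
and absorbing one half of the left-hand side into the right yields the claim. The only genuine subtlety is integrability: if $\int_\dom \operatorname{tr}(a^T u'(w)^{-1}a)\dd x$ is infinite the statement is vacuous, while if it is finite the pointwise Young inequality combined with $u'(w)\in L^\infty(\dom)$ (since $w\in D(L)\hookrightarrow L^\infty(\dom)$) and $b\in D(L)\hookrightarrow L^\infty(\dom)$ makes all integrals absolutely convergent, so no further regularization is needed. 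The argument is in essence the standard fact that the positive semidefinite perturbation $\eps L^*L$ can only decrease the weighted norm $\langle a,(u'(w)+\eps L^*L)^{-1}a\rangle$ relative to $\langle a,u'(w)^{-1}a\rangle$, reformulated in a way that avoids requiring $u'(w)^{-1}a$ to lie in $D(L)$.
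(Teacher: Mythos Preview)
Your proof is correct and follows essentially the same approach as the paper: both start from the identity $\int_\dom a:b\,\dd x = \int_\dom\operatorname{tr}(b^Tu'(w)b)\,\dd x + \eps\sum_j\|Lb_{\cdot j}\|_H^2$ obtained by testing $a=u'(w)b+\eps L^*Lb$ against $b$, apply the pointwise weighted Young inequality $a:b\le\tfrac12\operatorname{tr}(b^Tu'(w)b)+\tfrac12\operatorname{tr}(a^Tu'(w)^{-1}a)$, and then absorb. The only cosmetic difference is the order of substitution: the paper first extracts the bound $\int_\dom\operatorname{tr}(b^Tu'(w)b)\,\dd x\le\int_\dom\operatorname{tr}(a^Tu'(w)^{-1}a)\,\dd x$ and inserts it into the integrated Young inequality, whereas you insert the lower bound $\int_\dom\operatorname{tr}(b^Tu'(w)b)\,\dd x\le\int_\dom a:b\,\dd x$ directly; the algebra is identical.
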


\begin{lemma}\label{lem.v0}
Let $v^0\in L^p(\Omega;L^1(\dom))$ for some $p\ge 1$ satisfies
$\E\int_\dom h(v^0)\dd x\le C$. Then 
$$
  \int_\dom h(u(R_\eps(v^0)))\dd x + \frac{\eps}{2}\|LR_\eps(v^0)\|_{L^2(\dom)}^2
	\le \int_\dom h(v^0)\dd x.
$$
\end{lemma}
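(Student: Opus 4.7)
The plan is to exploit the convexity of the entropy density $h$ together with the identity that $R_\eps$ is designed to satisfy. First I would set $w^\eps := R_\eps(v^0) \in D(L)$. The definition $Q_\eps(w)=u(w)+\eps L^*Lw$ together with Lemma \ref{lem.Qeps} then gives
$$
  u(w^\eps) + \eps L^*Lw^\eps = v^0 \quad\text{in }D(L)',
$$
so that $v^0 - u(w^\eps) = \eps L^*Lw^\eps$. Observe that the finiteness of $\int_\dom h(v^0)\,\dd x$ forces $v^0(x)\in\simplex$ for a.e.\ $x\in\dom$, while by construction $u(w^\eps)(x)\in\simplex$ as well, since $u=(h')^{-1}$ takes values in $\simplex$.

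Next I would apply the pointwise tangent inequality for the convex function $h$, using the key identity $h'(u(w^\eps))=w^\eps$ (immediate from $u=(h')^{-1}$):
$$
  h(v^0(x)) \ge h(u(w^\eps)(x)) + w^\eps(x)\cdot\bigl(v^0(x) - u(w^\eps)(x)\bigr)\quad\text{for a.e.\ }x\in\dom.
$$
Integrating over $\dom$ and substituting $v^0 - u(w^\eps) = \eps L^*Lw^\eps$, the remainder term becomes the duality pairing between $w^\eps\in D(L)$ and $\eps L^*Lw^\eps\in D(L)'$. By the defining property of $L^*$ recorded in Proposition \ref{prop.L}, this pairing equals
$$
  \eps\,\langle L^*Lw^\eps, w^\eps\rangle_{D(L)',D(L)} = \eps\,(Lw^\eps, Lw^\eps)_{L^2(\dom)} = \eps\,\|Lw^\eps\|_{L^2(\dom)}^2,
$$
which is even stronger than the claimed bound (since $\eps\ge\eps/2$), so the lemma follows.

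The main technicality to address is the rigorous interpretation of the integrated convexity inequality: one must check that $w^\eps\cdot(v^0 - u(w^\eps))$ is genuinely integrable and that its integral coincides with the abstract pairing $\langle\cdot,\cdot\rangle_{D(L)',D(L)}$. This is ensured by the Sobolev embedding $D(L)=H^m_N(\dom)\hookrightarrow L^\infty(\dom)$ (valid since $m>d/2+1$) together with $v^0\in L^1(\dom)$ and $u(w^\eps)\in L^\infty(\dom)$ from Lemma \ref{lem.u}, making the $L^1$--$L^\infty$ pairing available and consistent with the $D(L)'$--$D(L)$ duality. Beyond this verification, the proof is the three-line computation above.
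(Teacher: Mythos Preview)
Your proof is correct and follows essentially the same approach as the paper: both use the pointwise convexity inequality for $h$ at the point $u(R_\eps(v^0))$, invoke $h'(u(R_\eps(v^0)))=R_\eps(v^0)$, and then identify the remainder term via $v^0-u(R_\eps(v^0))=\eps L^*LR_\eps(v^0)$ and the duality $\langle L^*Lw,w\rangle=\|Lw\|_{L^2(\dom)}^2$. Your observation that one actually obtains the factor $\eps$ rather than $\eps/2$ is also borne out by the paper's computation.
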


We turn to the entropy estimate.

\begin{proposition}[Entropy inequality]\label{prop.ent}
Let $\blue{(v^\eps,\tau_R)}$ be a local solution to \eqref{4.approx}--\eqref{4.bic} and set
$v^R(t)=v^\eps(\omega,t\wedge\tau_R(\omega))$ for $\omega\in\Omega$, 
$t\in(0,\tau_R(\omega))$.
Then there exists a constant $C(u^0,T)>0$, depending on $u^0$ and $T$ but 
not on $\eps$ and $R$, such that
\begin{align*}
  \E&\sup_{0<t<T\wedge\tau_R}\int_\dom h(u^\eps(t))\dd x
	+ \frac{\eps}{2}\E\sup_{0<t<T\wedge\tau_R}\|Lw^\eps(t)\|_{L^2(\dom)}^2 \\
	&{}+ \E\sup_{0<t<T\wedge\tau_R}\int_0^t\int_\dom
	\na w^\eps(s):B(w^\eps(s))\na w^\eps(s)\dd x\dd s \le C(u^0,T),
\end{align*}
where $u^\eps:=u(R_\eps(v^R))$ and $w^\eps:=R_\eps(v^R)$.
\end{proposition}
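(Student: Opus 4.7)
The plan is to apply It\^o's formula to the perturbed entropy functional
$$\Phi(v) := \int_\dom h(u(R_\eps(v)))\dd x + \frac{\eps}{2}\|LR_\eps(v)\|_{L^2(\dom)}^2, \qquad v \in D(L)',$$
evaluated along the stopped process $v^R(t) = v^\eps(t\wedge\tau_R)$. First I would verify that $\Phi \in C^2(D(L)';\R)$ and compute its Fr\'echet derivatives. Using the chain rule together with the differentiability of $R_\eps$ (Lemma~\ref{lem.Reps}) and $u$ (Lemma~\ref{lem.u}), and the fact that $h'(u(w)) = w$ with $u'(w)$ symmetric, a direct computation gives $\DD\Phi[v]\eta = \langle u'(w)w + \eps L^*Lw,\, \DD R_\eps[v]\eta\rangle$ where $w = R_\eps(v)$. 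Since $\DD R_\eps[v]$ inverts the symmetric operator $\DD Q_\eps[w] = u'(w) + \eps L^*L$ and since trivially $(u'(w) + \eps L^*L)w = u'(w)w + \eps L^*Lw$, this collapses to $\langle w, \eta\rangle$, yielding the key identification $\DD\Phi[v] = R_\eps(v) \in D(L)$ (embedded in $D(L)''$ via the isometry of Lemma~\ref{lem.L1}). Differentiating again gives $\DD^2\Phi[v] = \DD R_\eps[v]$, a bounded symmetric bilinear form on $D(L)' \times D(L)'$.

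With these derivatives in hand, It\^o's formula applied to $\Phi(v^R(t))$ produces three contributions. The drift yields $\langle w^\eps,\, \diver(B(w^\eps)\na w^\eps)\rangle$, which after integration by parts equals $-\int_\dom \na w^\eps : B(w^\eps)\na w^\eps \dd x$; the boundary term vanishes because $w^\eps \in D(L) = H^m_N(\dom)$ enforces $\na w^\eps\cdot\nu = 0$. The stochastic integral is a local martingale $M(t) = \sum_{i,j,k}\int_0^t\int_\dom w_i^\eps \sigma_{ij}(u^\eps) e_k \dd x\, \dd W_j^k$ whose quadratic variation is exactly the quantity estimated by the first inequality of Assumption~(A5). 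The It\^o correction is $\tfrac12 \sum_k \langle \DD R_\eps[v^R](\sigma(u^\eps)e_k),\, \sigma(u^\eps)e_k\rangle$, and Lemma~\ref{lem.ab} (applied with $a = \sigma(u^\eps)e_k$, $b = \DD R_\eps[v^R]a$) bounds each summand pointwise by $\int_\dom \operatorname{tr}[(\sigma e_k)^T h''(u^\eps)(\sigma e_k)] \dd x$, matching the integrand in the second inequality of Assumption~(A5). Taking the supremum in $t$ and then expectation, Burkholder--Davis--Gundy combined with the first inequality of (A5) bounds $\E\sup |M(t)|$ by $C(1 + \E\int_0^{T\wedge\tau_R}\int_\dom h(u^\eps)\dd x\dd s)$, and (A5) bounds the accumulated It\^o correction by the same expression. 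The initial contribution $\E\Phi(v^\eps(0))$ is controlled by $\E\int_\dom h(u^0)\dd x$ via Lemma~\ref{lem.v0}, which is finite by Assumption~(A2) and the at-most-quadratic growth of $h$. A Gronwall argument applied to $\phi(t) := \E\sup_{s < t\wedge\tau_R}\int_\dom h(u^\eps(s))\dd x + \tfrac{\eps}{2}\E\sup_{s < t\wedge\tau_R}\|Lw^\eps(s)\|_{L^2}^2$, which is a priori finite (in $R$ and $\eps$) by the bound $\|v^R\|_{D(L)'} \le R$ and the Lipschitz property of $R_\eps$, closes the estimate uniformly in both parameters.

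I expect the principal obstacle to be the rigorous justification of It\^o's formula in this non-Hilbertian setting: $\DD\Phi$ maps $D(L)' \to D(L)$ and $\DD^2\Phi = \DD R_\eps$ lives naturally on $(D(L)', D(L)')$, while $\sigma(u^\eps)$ takes values in $\L_2(U;L^2(\dom))$. The cleanest route is to invoke the abstract It\^o formula in the Gelfand triple $D(L) \subset L^2(\dom) \subset D(L)'$ as in \cite[Chapter~4]{LiRo15}, noting that the identification $\DD\Phi[v] = R_\eps(v)$ is exactly the pairing used there; alternatively one can run the calculation on the Galerkin approximations underlying Theorem~\ref{thm.approx} and pass to the limit. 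Lemma~\ref{lem.ab} is the other non-routine ingredient, as it is what converts the abstract It\^o correction $\langle \DD R_\eps[v]a, a\rangle$ into the entropy-Hessian expression controllable via Assumption~(A5); once this and the identification $\DD\Phi = R_\eps$ are secured, the rest of the argument follows the standard deterministic entropy pattern combined with BDG.
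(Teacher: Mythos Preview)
Your proposal is correct and follows essentially the same strategy as the paper's proof: apply It\^o's formula to the regularized entropy $\mathcal{H}=\Phi$, identify $\DD\mathcal{H}[v]=R_\eps(v)$ and $\DD^2\mathcal{H}[v]=\DD R_\eps[v]$, treat the drift by integration by parts, the It\^o correction via Lemma~\ref{lem.ab}, the martingale via Burkholder--Davis--Gundy together with Assumption~(A5), the initial datum via Lemma~\ref{lem.v0}, and close with Gronwall. The only notable difference is in the justification of It\^o's formula: the paper invokes Krylov's version \cite[Theorem~3.1]{Kry13} with $V=H=D(L)'$ and therefore must represent $\DD\mathcal{H}[v]$ as the element $L^*LR_\eps(v)\in D(L)'$ (via Lemma~\ref{lem.L1}), whereas you propose the Gelfand-triple formula from \cite{LiRo15} or a Galerkin limit; either route works.
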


\begin{proof}
The result follows from It\^o's lemma using a regularized entropy.
More precisely, we want to apply the It\^o lemma in the version of 
\cite[Theorem 3.1]{Kry13}.
To this end, we verify the assumptions of that theorem. Basically, we need 
a twice differentiable function ${\mathcal H}$ on a Hilbert space $H$, 
whose derivatives
satisfy some local growth conditions on $H$ and $V$, where $V$ is another
Hilbert space such that the embedding
$V\hookrightarrow H$ is dense and continuous. We choose $V=H=D(L)'$
and the regularized entropy
\begin{equation}\label{3.ent}
  {\mathcal H}(v) := \int_\dom h(u(R_\eps(v)))\dd x 
	+ \frac{\eps}{2}\|LR_\eps(v)\|_{L^2(\dom)}^2, \quad v\in D(L)'.
\end{equation}
Recall that $R_\eps(v) = h'(u(R_\eps(v)))$ for $v\in D(L)'$, since
$u=u(w)$ is the inverse of $h'$. 
Then, in view of the regularity assumptions for $h$ and Lemma \ref{lem.Reps},
${\mathcal H}$ is Fr\'echet differentiable with derivative
\begin{align*}
  \DD {\mathcal H}[v](\xi) &= \int_\dom\big(h'(u(R_\eps(v)))u'(R_\eps(v))
	\DD R_\eps[v](\xi)
	+ \eps L\DD R_\eps[v](\xi)\cdot LR_\eps(v)\big)\dd x \\
	&= \big\langle (u'(R_\eps(v))+\eps L^*L)\DD R_\eps[v](\xi),R_\eps(v) 
	\big\rangle_{D(L)',D(L)} \\
	&= \big\langle \DD Q_\eps[R_\eps(v)]\DD R_\eps[v](\xi),R_\eps(v)
	\big\rangle_{D(L)',D(L)}
	= \langle \xi,R_\eps(v)\rangle_{D(L)',D(L)},
\end{align*}
where $v$, $\xi\in D(L)'$.
In other words, $\DD {\mathcal H}[v]$ can be identified with 
$R_\eps(v)\in D(L)$.
In a similar way, we can prove that $\DD {\mathcal H}[v]$ 
is Fr\'echet differentiable with
$$
  \DD^2 {\mathcal H}[v](\xi,\eta) = \langle \xi,\DD R_\eps[v](\eta)\rangle_{D(L)',D(L)}
	\quad\mbox{for } v,\,\xi,\,\eta\in D(L)'.
$$
We have, thanks to the Lipschitz continuity of $R_\eps$
and $\DD R_\eps[v]$ (see Lemma \ref{lem.Reps})
for all $v$, $\xi\in D(L)'$ with $\|v\|_{D(L)'}\le K$ for some $K>0$,
\begin{align*}
  |\DD {\mathcal H}[v](\xi)| &\le \|R_\eps(v)\|_{D(L)}\|\xi\|_{D(L)'}
	\le C(\eps)(1+\|v\|_{D(L)'})\|\xi\|_{D(L)'} \le C(\eps,K)\|\xi\|_{D(L)'}, \\
  |\DD^2 {\mathcal H}[v](\xi,\xi)| &\le \|\DD R_\eps[v](\xi)\|_{D(L)}\|\xi\|_{D(L)'}
	\le C(\eps)\|\xi\|_{D(L)'}^2.
\end{align*}

Finally, for any $\eta\in D(L)'$, we need an estimate for the mapping $D(L)'\to\R$, 
$v\mapsto \DD {\mathcal H}[v](\eta)$. 
We have identified $\DD {\mathcal H}[v]$ with $R_\eps(v)\in D(L)$, but we need
an identification in $D(L)'$. As in Lemma \ref{lem.L1}, the operator $L$ can be 
constructed in such a way that the Riesz representative in $D(L)'$ of a functional
acting on $D(L)'$ can be expressed via the application of $L^*L$ to an element of
$D(L)$. Indeed, for $F\in D(L)$ and $\xi\in D(L)'$, we infer from Lemma \ref{lem.L1}
that
\begin{align*}
  \langle\xi,F\rangle_{D(L)',D(L)} &= (L^{-1}\xi,LF\rangle_{D(L)',D(L)}
	= ((LL^{-1})L^{-1}\xi,LF)_{L^2(\dom)} \\
	&= (L^{-1}\xi,L^{-1}L^*LF)_{L^2(\dom)} = (L^*LF,\xi)_{D(L)'}.
\end{align*}
Hence, we can associate $\DD {\mathcal H}[v]$ with $L^*LR_\eps(v) \in D(L)'$.
Then, by the first estimate in \eqref{3.LL} and the Lipschitz continuity of 
$R_\eps$,
\begin{align*}
  \|L^*LR_\eps(v)\|_{D(L)'} 
	&\le C\|R_\eps(v)\|_{D(L)} 
	\le C\|R_\eps(v)-R_\eps(0)\|_{D(L)} + C\|R_\eps(0)\|_{D(L)} \\
	&\le C(\eps)(1+\|v\|_{D(L)'})\quad\mbox{for all }v\in D(L)',
\end{align*}
giving the desired estimate for $\DD{\mathcal H}[v]$ in $D(L)'$.
Thus, the assumptions of the It\^o lemma, as stated in \cite{Kry13}, are satisfied.

To simplify the notation, we set $u^\eps := u(R_\eps(v^R))$ and 
$w^\eps := R_\eps(v^R)$ in the following.
By It\^o's lemma, using $\DD {\mathcal H}[v^R]=h'(u^\eps)$, 
$\DD^2 {\mathcal H}[v^R]=\DD R_\eps(v^R)$, we have
\begin{align}\label{4.H0}
  {\mathcal H}(v^R(t))  
	&= {\mathcal H}(v(0)) + \int_0^t\big\langle
	\diver\big(B(w^\eps)\na h'(u^\eps(s))\big),w^\eps(s)
	\big\rangle_{D(L)',D(L)}\dd s \\
	&\phantom{xx}{}+ \sum_{k=1}^\infty\sum_{i,j=1}^n\int_0^t\int_\dom
	\frac{\pa h}{\pa u_i}(u^\eps(s))\sigma_{ij}(u^\eps(s))e_k\dd x\dd W_j^k(s)
  \nonumber \\
	&\phantom{xx}{}
	+ \frac12\sum_{k=1}^\infty\int_0^t\int_\dom\DD R_\eps[v^R(s)]
	\big(\sigma(u^\eps(s))e_k\big):\big(\sigma(u^\eps(s))e_k\big)
	\dd x\dd s. \nonumber
\end{align}
Lemma \ref{lem.v0} shows that the first term on the right-hand side can be
estimated from above by $\int_\dom h(u^0)\dd x$.
Using $w^\eps = R_\eps(v^R)=h'(u^\eps)$ and integrating by parts,
the second term on the right-hand side can be written as
\begin{align*}
  \int_0^t\big\langle &
	\diver\big(B(w^\eps)\na h'(u^\eps(s))\big),w^\eps(s)
	\big\rangle_{D(L)',D(L)}\dd s \\
	&= -\int_0^t\int_\dom \na w^\eps(s):B(w^\eps)\na w^\eps(s)\dd x\dd s \le 0.
\end{align*}
The boundary integral vanishes because of the choice of the space 
$D(L)=H^m_N(\dom)$. The last inequality
follows from Assumption (A3), which implies that 
$B(w^\eps) = A(u(w^\eps))h''(u(w^\eps))^{-1}$ is positive semidefinite..
We reformulate the last term in \eqref{4.H0} by applying
Lemma \ref{lem.ab} with $a=\sigma(u^\eps)e_k$ and
$b=\DD R_\eps[v](\sigma(u^\eps)e_k)$:
\begin{align*}
  \int_\dom & \DD R_\eps[v^R]
	\big(\sigma(u^\eps)e_k\big):\big(\sigma(u^\eps)e_k\big)\dd x \\
	&\le \int_\dom\operatorname{tr}\big[(\sigma(u^\eps)e_k)^T u'(w^\eps)^{-1}
	\sigma(u^\eps)e_k\big]\dd x.
\end{align*}
Taking the supremum in \eqref{4.H0} over $(0,T_R)$, where $T_R\le T\wedge\tau_R$, 
and the expectation yields
\begin{align}\label{4.Hest}
  \E&\sup_{0<t<T_R}\int_\dom h(u^\eps(t))\dd x 
	+ \frac{\eps}{2}\E\sup_{0<t<T_R}\|Lw^\eps\|_{L^2(\dom)}^2 \\
	&\phantom{xx}{}
	+ \E\sup_{0<t<T_R}\int_0^t\int_\dom \na w^\eps(s):B(w^\eps)\na w^\eps(s)
  \dd x\dd s - \E\int_\dom h(u^0)\dd x\nonumber \\
	&\le\E\sup_{0<t<T_R}\sum_{k=1}^\infty\sum_{i,j=1}^n\int_0^t\int_\dom
	\frac{\pa h}{\pa u_i}(u^\eps(s))\sigma_{ij}(u^\eps(s))e_k\dd x\dd W_j^k(s) 
	\nonumber \\
	&\phantom{xx}{}+ \frac12\E\sup_{0<t<T_R}\sum_{k=1}^\infty\int_0^t\int_\dom
	\operatorname{tr}\big[(\sigma(u^\eps(s))e_k)^T u'(w^\eps(s))^{-1}
	\sigma(u^\eps(s))e_k\big]\dd x\dd s \nonumber \\
	&=: I_1 + I_2. \nonumber
\end{align}
We apply the Burkholder--Davis--Gundy inequality \cite[Theorem 6.1.2]{LiRo15}
to $I_1$ and use Assumption (A5):
\begin{align*}
  I_1 &\le C\E\sup_{0<t<T_R}\bigg\{\int_0^t\sum_{k=1}^\infty\sum_{i,j=1}^n
	\bigg(\int_\dom\frac{\pa h}{\pa u_i}(u^\eps(s))\sigma_{ij}(u^\eps(s))e_k
	\dd x\bigg)^2\dd s\bigg\}^{1/2} \\
	&\le C\E\sup_{0<t<T_R}\bigg(1+\int_0^t\int_\dom h(u^\eps(s))\dd x\dd s\bigg).
\end{align*}
Also the remaining integral $I_2$ can be bounded from above by Assumption (A5):
$$
  I_2 \le C\E\sup_{0<t<T_R}\bigg(1+\int_0^t\int_\dom h(u^\eps(s))\dd x\dd s\bigg).
$$
Therefore, \eqref{4.Hest} becomes
\begin{align}\label{4.Hest2}
  \E&\sup_{0<t<T_R}\int_\dom h(u^\eps(t))\dd x 
	+ \frac{\eps}{2}\E\sup_{0<t<T_R}\|Lw^\eps\|_{L^2(\dom)}^2 \\
	&\phantom{xx}{}
	+ \E\sup_{0<t<T_R}\int_0^t\int_\dom \na w^\eps(s):B(w^\eps)\na w^\eps(s)
  \dd x\dd s - \E\int_\dom h(u^0)\dd x\nonumber \\
	&\le C\E\sup_{0<t<T_R}\bigg(1+\int_0^t\int_\dom h(u^\eps(s))\dd x\dd s\bigg) 
	\nonumber \\
	&\le C + C\E\int_0^{T_R}
	\int_\dom\sup_{0<s<t}h(u^\eps(s))\dd x\dd t. \nonumber
\end{align}
We apply Gronwall's lemma to the function $F(t)=\sup_{0<s<t}\int_\dom h(u^\eps(s))\dd x$
to find that
$$
  \E\sup_{0<t<T_R}\int_\dom h(u^\eps(t))\dd x \le C(u^0,T).
$$
Using this bound in \eqref{4.Hest2} then finishes the proof.
\end{proof}

The entropy inequality allows us to extend the local solution to a global one.

\begin{proposition}
Let $\blue{(v^\eps,\tau_R)}$ be a local solution to \eqref{4.defsol}--\eqref{4.bc},
constructed in Theorem \ref{thm.approx}. Then $v^\eps$ can be extended to 
a global solution to \eqref{4.defsol}--\eqref{4.bc}.
\end{proposition}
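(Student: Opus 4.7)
The plan is to show that the blow-up time $\tau_\infty:=\lim_{R\to\infty}\tau_R$ is almost surely infinite, which allows pasting together the local solutions $v^\eps$ indexed by $R$ to obtain a single process defined on $[0,\infty)$. The uniqueness assertion of Theorem \ref{thm.approx} guarantees that the local solutions corresponding to different values of $R$ agree on their common time intervals, so the pasting is unambiguous.

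First I would translate the entropy estimate of Proposition \ref{prop.ent} into an $R$-independent bound for $\|v^R\|_{D(L)'}$, where $v^R(t)=v^\eps(t\wedge\tau_R)$. Since $v^R=u(w^\eps)+\eps L^*Lw^\eps$ with $w^\eps=R_\eps(v^R)$, I split
$$
\|v^R(t)\|_{D(L)'}\le \|u^\eps(t)\|_{D(L)'}+\eps\|L^*Lw^\eps(t)\|_{D(L)'}.
$$
The embedding $D(L)\hookrightarrow L^\infty(\dom)$ yields $L^1(\dom)\hookrightarrow D(L)'$, so by Hypothesis (H1) the first term is controlled by $C(1+\int_\dom h(u^\eps)\,\dd x)$. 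For the second term, the bound $\|L^*(\cdot)\|_{D(L)'}\le\|\cdot\|_{L^2(\dom)}$ from \eqref{3.LL} gives $\eps\|L^*Lw^\eps\|_{D(L)'}\le \eps\|Lw^\eps\|_{L^2(\dom)}=\sqrt{\eps}\cdot\sqrt{\eps\|Lw^\eps\|_{L^2}^2}$. Taking the supremum over $t\in(0,T\wedge\tau_R)$ and expectations, and invoking the entropy inequality together with the Cauchy--Schwarz inequality, one obtains
$$
\E\sup_{0<t<T\wedge\tau_R}\|v^R(t)\|_{D(L)'}\le C(u^0,T,\eps),
$$
with a constant independent of $R$.

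Next, I would apply Chebyshev's inequality. By definition of $\tau_R$, on the event $\{\tau_R\le T\}$ the $D(L)'$-norm of $v^R$ attains the value $R$, so
$$
\Prob(\tau_R\le T)\le \Prob\Big(\sup_{0<t<T\wedge\tau_R}\|v^R(t)\|_{D(L)'}\ge R\Big)
\le \frac{C(u^0,T,\eps)}{R}\;\xrightarrow[R\to\infty]{}\;0.
$$
Since $\tau_R$ is nondecreasing in $R$, the limit $\tau_\infty$ exists a.s., and the monotone-convergence identity $\{\tau_\infty\le T\}=\bigcap_R\{\tau_R\le T\}$ yields $\Prob(\tau_\infty\le T)=0$ for every $T>0$. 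Hence $\tau_\infty=\infty$ a.s.

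Finally, I would define the global process by setting $v^\eps(t):=v^{\eps,R}(t)$ on $\{\tau_R>t\}$ where $v^{\eps,R}$ is the local solution provided by Theorem \ref{thm.approx} with stopping time $\tau_R$; uniqueness ensures consistency as $R$ varies, and the weak formulation \eqref{4.defsol} and the boundary condition \eqref{4.bc} pass to the global process trivially. The only non-routine ingredient is the first step, namely controlling the $D(L)'$-norm by the entropy functional uniformly in $R$; here the technical point is to verify that the $\eps$-weighted gradient term in the regularized entropy provides precisely the right bound for the $\eps L^*Lw^\eps$ contribution to $v^R$, which succeeds thanks to the dual estimate in \eqref{3.LL}.
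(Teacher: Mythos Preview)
Your proof is correct and follows essentially the same approach as the paper: both decompose $v^R=u^\eps+\eps L^*Lw^\eps$, bound the first piece via $L^1(\dom)\hookrightarrow D(L)'$ and Hypothesis (H1), bound the second via \eqref{3.LL}, and then invoke the entropy inequality to obtain an $R$-independent bound on $\E\sup_{0<t<T\wedge\tau_R}\|v^R(t)\|_{D(L)'}$. You are more explicit than the paper about the Chebyshev/monotone-convergence step and the pasting of local solutions (the paper simply writes ``this allows us to perform the limit $R\to\infty$''), and you note that the constant may depend on $\eps$, whereas the paper's estimate is in fact uniform in $\eps$; since $\eps$ is fixed in this proposition the distinction is immaterial here.
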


\begin{proof}
With the notation $u^\eps = u(R_\eps(v^\eps))$ and $w^\eps=R_\eps(v^\eps)$, 
we observe that $v^\eps=Q_\eps(R_\eps(v^\eps))$ 
$= u(R_\eps(v^\eps))+\eps L^*LR_\eps(v^\eps) = u^\eps + \eps L^*Lw^\eps$.
Thus, we have for $T_R\le T\wedge\tau_R$,
\begin{align*}
  \E&\sup_{0<t<T_R}\|v^\eps(t)\|_{D(L)'}
	\le \E\sup_{0<t<T_R}\|u^\eps\|_{D(L)'} 
	+ \eps\E\sup_{0<t<T_R}\|L^*Lw^\eps(t)\|_{D(L)'} \\
	&\le C\E\sup_{0<t<T_R}\|u^\eps\|_{L^1(\dom)}
	+ \eps \E\sup_{0<t<T_R}\|L^*Lw^\eps(t)\|_{D(L)'}.
\end{align*}
We know from Hypothesis (H1) that $|u^\eps|\le C(1+h(u^\eps))$. Therefore,
taking into account the entropy inequality and the second inequality
in \eqref{3.LL},
$$
  \E\sup_{0<t<T_R}\|v(t)\|_{D(L)'} 
	\le C\E\sup_{0<t<T_R}\|h(u^\eps(t))\|_{L^1(\dom)} + \eps C\sup_{0<t<T_R}
	\|Lw^\eps(t)\|_{L^2(\dom)} \le C(u^0,T).
$$
This allows us to perform the limit $R\to\infty$ and to conclude that we have
indeed a solution $v^\eps$ in $(0,T)$ for any $T>0$.
\end{proof}


\section{Proof of Theorem \ref{thm.skt}}\label{sec.skt}

We prove the global existence of martingale solutions to the SKT model
with self-diffusion.

\subsection{Uniform estimates}

Let $v^\eps$ be a global solution to \eqref{4.defsol}--\eqref{4.bc}
and set $u^\eps=u(R_\eps(v^\eps))$. We assume that $A(u)$ is given by
\eqref{1.SKT} and that $a_{ii}>0$ for $i=1,\ldots,n$.
We start with some uniform estimates, which are a consequence
of the entropy inequality in Proposition \ref{prop.ent}.

\begin{lemma}[Uniform estimates]\label{lem.est}
There exists a constant $C(u^0,T)>0$ such that for all $\eps>0$ and
$i,j=1,\ldots,n$ with $i\neq j$,
\begin{align}
  \E\|u_i^\eps\|_{L^\infty(0,T;L^1(\dom))} &\le C(u^0,T), \label{5.L1} \\
	a_{i0}^{1/2}\E\|(u_i^\eps)^{1/2}\|_{L^2(0,T;H^1(\dom))}
	+ a_{ii}^{1/2}\E\|u_i^\eps\|_{L^2(0,T;H^1(\dom))} &\le C(u^0,T), \label{5.H1} \\
	a_{ij}^{1/2}\E\|\na(u_i^\eps u_j^\eps)^{1/2}\|_{L^2(0,T;L^2(\dom))} &\le C(u^0,T).
	\nonumber 
\end{align}
Moreover, we have the estimate
\begin{equation}\label{5.v}
  \eps\E\|LR_\eps(v^\eps)\|_{L^\infty(0,T;L^2(\dom))}^2
	+ \E\|v^\eps\|_{L^\infty(0,T;D(L)')}^2 \le C(u^0,T).
\end{equation}
\end{lemma}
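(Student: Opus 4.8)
The plan is to read off all four bounds directly from the entropy inequality of Proposition~\ref{prop.ent}, applied to the \emph{global} solution $v^\eps$, so that in the notation of that proposition one may let $\tau_R$ be arbitrarily large and write $w^\eps=R_\eps(v^\eps)$, $u^\eps=u(R_\eps(v^\eps))$. Two ingredients will be used throughout. The first is Hypothesis~(H1), which gives the pointwise bound $|u^\eps|\le C(1+h(u^\eps))$, hence $\|u^\eps(t)\|_{L^1(\dom)}\le C\big(1+\int_\dom h(u^\eps(t))\dd x\big)$. The second is the algebraic identity underlying \eqref{1.ei}: for the SKT matrix \eqref{1.SKT} under the detailed-balance condition (A3) and with $w_i^\eps=\pi_i\log u_i^\eps$, one has
\[
  \int_\dom \na w^\eps:B(w^\eps)\na w^\eps\,\dd x
  \ge c\sum_{i=1}^n\Big(a_{i0}\int_\dom|\na\sqrt{u_i^\eps}|^2\dd x + a_{ii}\int_\dom|\na u_i^\eps|^2\dd x
  + \sum_{j\ne i}a_{ij}\int_\dom|\na\sqrt{u_i^\eps u_j^\eps}|^2\dd x\Big)
\]
for some $c>0$ depending only on the $\pi_i$ and $a_{ij}$; thus the dissipation term of Proposition~\ref{prop.ent} controls $a_{i0}\E\int_0^T\int_\dom|\na\sqrt{u_i^\eps}|^2$, $a_{ii}\E\int_0^T\int_\dom|\na u_i^\eps|^2$, and $a_{ij}\E\int_0^T\int_\dom|\na\sqrt{u_i^\eps u_j^\eps}|^2$.

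First I would prove \eqref{5.L1}: integrate $|u^\eps|\le C(1+h(u^\eps))$ over $\dom$, take $\sup_{0<t<T}$ and then the expectation, and use the first term of Proposition~\ref{prop.ent}. Next, for the self-diffusion part of \eqref{5.H1}, I would invoke the Poincaré--Wirtinger inequality on the Lipschitz domain $\dom$ (A1), giving $\|u_i^\eps\|_{H^1(\dom)}^2\le C\big(\|\na u_i^\eps\|_{L^2(\dom)}^2+\|u_i^\eps\|_{L^1(\dom)}^2\big)$; integrating in time, extracting a square root, and taking the expectation, I would bound the gradient term by the displayed inequality above (with the factor $a_{ii}$) together with Cauchy--Schwarz in $\omega$, and the mass term by \eqref{5.L1}, obtaining $a_{ii}^{1/2}\E\|u_i^\eps\|_{L^2(0,T;H^1(\dom))}\le C(u^0,T)$. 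The diffusion part is even cleaner, since $\|(u_i^\eps)^{1/2}\|_{L^2(\dom)}^2=\int_\dom u_i^\eps\,\dd x=\|u_i^\eps\|_{L^1(\dom)}$ requires no Poincaré step; and the cross term $a_{ij}^{1/2}\E\|\na(u_i^\eps u_j^\eps)^{1/2}\|_{L^2(0,T;L^2(\dom))}$ follows straight from Proposition~\ref{prop.ent} after one Cauchy--Schwarz step.

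For \eqref{5.v}, the bound on $\eps\,\E\|LR_\eps(v^\eps)\|_{L^\infty(0,T;L^2(\dom))}^2$ is precisely the second term of Proposition~\ref{prop.ent}. For the $v^\eps$-term I would use the decomposition $v^\eps=u^\eps+\eps L^*Lw^\eps$, the embedding $D(L)\hookrightarrow L^\infty(\dom)$ (whence $\|u^\eps\|_{D(L)'}\le C\|u^\eps\|_{L^1(\dom)}\le C\big(1+\int_\dom h(u^\eps)\dd x\big)$ by (H1)), and $\|L^*Lw^\eps\|_{D(L)'}\le\|Lw^\eps\|_{L^2(\dom)}$ from \eqref{3.LL}, to get $\|v^\eps(t)\|_{D(L)'}\le C\big(1+\int_\dom h(u^\eps(t))\dd x\big)+\eps\|Lw^\eps(t)\|_{L^2(\dom)}$. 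Squaring, taking $\sup_{0<t<T}$ and the expectation reduces the claim to $\E\sup_{0<t<T}\big(\int_\dom h(u^\eps(t))\dd x\big)^2\le C$; the quadratic factor $\eps^2\|Lw^\eps\|_{L^2}^2$ is again the second term of Proposition~\ref{prop.ent}. The required quadratic entropy bound is the $p=2$ version of Proposition~\ref{prop.ent}, obtained by repeating its proof with the second-moment Burkholder--Davis--Gundy inequality and Gronwall's lemma, which is legitimate since (A2) gives $\E\big(\int_\dom h(u^0)\dd x\big)^2<\infty$.

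The main obstacle is accounting rather than conceptual: the estimates are stated for expectations of the norms themselves, not their squares, and the mass term $\|u_i^\eps\|_{L^1(\dom)}$ produced by Poincaré enters only linearly after extracting the square root, so the \emph{linear} control of $\E\sup_t\int_\dom h(u^\eps)$ from Proposition~\ref{prop.ent} already suffices for \eqref{5.L1}--\eqref{5.H1}; the genuinely quadratic statement is \eqref{5.v}, which is exactly why the entropy inequality must be upgraded to its second-moment form there. The only point where the detailed-balance structure and the precise SKT coefficients enter is the identity relating $\na w^\eps:B(w^\eps)\na w^\eps$ to the sum of gradient terms, i.e.\ the computation underlying \eqref{1.ei}; granting that, the remainder is Poincaré, Cauchy--Schwarz, Jensen, and (H1).
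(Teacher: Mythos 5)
Your proof follows the same route as the paper's: the dissipation lower bound of \cite[Lemma~4]{CDJ18} applied to Proposition~\ref{prop.ent}, Hypothesis~(H1) for the $L^1$ bound, Poincar\'e--Wirtinger for the $H^1$ bound, and the decomposition $v^\eps=u^\eps+\eps L^*LR_\eps(v^\eps)$ together with \eqref{3.LL} for \eqref{5.v}. You are, in fact, slightly more careful than the paper on one point: the paper's own proof of \eqref{5.v} only exhibits the first-moment estimate $\E\sup_{0<t<T}\|v^\eps(t)\|_{D(L)'}\le C(u^0,T)$, whereas the displayed statement carries squares, and you correctly identify that the squared version requires the $p=2$ upgrade of the entropy inequality (which the paper supplies separately in Lemma~\ref{lem.hom}), noting that Assumption~(A2) indeed makes $\E\big(\int_\dom h(u^0)\dd x\big)^2$ finite so the Gronwall argument goes through at second moments.
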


\begin{proof}
Let $v^\eps$ be a global solution to \eqref{4.defsol}--\eqref{4.bc}.
We observe that $R_\eps(v^\eps) = h'(u(R_\eps(v^\eps))) = h'(u^\eps)$
implies that $\na R_\eps(v^\eps) = h''(u^\eps)\na u^\eps$.
It is shown in \cite[Lemma 4]{CDJ18}
that for all $z\in\R^n$ and $u\in(0,\infty)^n$,
$$
  z^T h''(u)A(u)z \ge \sum_{i=1}^n\pi_i\bigg(a_{0i}\frac{z_i^2}{u_i} 
	+ 2a_{ii}z_i^2\bigg) + \frac12\sum_{i,j=1,\,i\neq j}^n
	\pi_i a_{ij}\bigg(\sqrt{\frac{u_j}{u_i}}z_i + \sqrt{\frac{u_i}{u_j}}z_j\bigg)^2.
$$
Using $B(R_\eps(v^\eps))=A(u^\eps)h''(u^\eps)^{-1}$ 
and the previous inequality with $z=\na u^\eps$, we find that
\begin{align}\label{5.RBR}
  \na R_\eps(v^\eps)&:B(R_\eps(v^\eps))\na R_\eps(v)
	= \na u^\eps:h''(u^\eps)\big(A(u^\eps)h''(u^\eps)^{-1}\big)h''(u^\eps)\na u^\eps \\
	&= \na u^\eps:h''(u^\eps)A(u^\eps)\na u^\eps \nonumber \\
	&\ge \sum_{i=1}^n\pi_i\big(4a_{0i}|\na(u^\eps)^{1/2}|^2 + 2a_{ii}|\na u^\eps|^2\big)
	+ 2\sum_{i\neq j}\pi_i a_{ij}|\na (u^\eps_i u^\eps_j)^{1/2}|^2. \nonumber
\end{align}
Therefore, the entropy inequality in Proposition \ref{prop.ent} becomes
\begin{align}\label{5.ei}
  \E\sup_{0<t<T}\int_\dom h(u^\eps(t))\dd x 
	&+ \E\sup_{0<t<T}\frac{\eps}{2}\|LR(v^\eps(t))\|_{L^2(\dom)}^2 \\
	&+ \E\int_0^T\int_\dom\sum_{i=1}^n\pi_i\big(4a_{0i}|\na(u^\eps)^{1/2}|^2 
	+ 2a_{ii}|\na u^\eps|^2\big)\dd x\dd s \nonumber \\
	&+ 2\E\int_0^T\int_\dom\sum_{i\neq j}\pi_i 
	a_{ij}|\na (u^\eps_i u^\eps_j)^{1/2}|^2\dd x\dd s
	\le C(u^0,T). \nonumber
\end{align}
This is the stochastic analog of the entropy inequality \eqref{1.ei}.
By Hypothesis (H1), we have $|u|\le C(1+h(u))$ and consequently,
$$
  \E\sup_{0<t<T}\|u^\eps(t)\|_{L^1(\dom)}
	\le C\E\sup_{0<t<T}\int_\dom h(u^\eps(t))\dd x + C \le C(u^0,T),
$$
which proves \eqref{5.L1}. Estimate \eqref{5.H1} then follows from the
Poincar\'e--Wirtinger inequality. 

It remains to show estimate \eqref{5.v}. We deduce from the second inequality in
\eqref{3.LL} that
\begin{align*}
  \|v^\eps(t)\|_{D(L)'} &= \|Q_\eps(R_\eps(v^\eps(t)))\|_{D(L)'}
	= \|u(R_\eps(v^\eps(t))) + \eps L^*LR_\eps(v^\eps(t))\|_{D(L)'} \\
	&\le C\|u(R_\eps(v^\eps(t)))\|_{L^1(\dom)} 
	+ \eps \|L^*LR_\eps(v^\eps(t))\|_{D(L)'} \\
	&\le  C\|u^\eps(t)\|_{L^1(\dom)} 
	+ \eps C\|LR_\eps(v^\eps(t))\|_{L^2(\dom)}.
\end{align*}
This shows that
$$
  \E\sup_{0<t<T}\|v^\eps(t)\|_{D(L)'}
	\le C\E\sup_{0<t<T}\|u^\eps\|_{L^1(\dom)} 
	+ \eps C\E\sup_{0<t<T}\|LR_\eps(v^\eps(t))\|_{L^2(\dom)} \le C(u^0,T),
$$
ending the proof.
\end{proof}

We also need higher-order moment estimates.

\begin{lemma}[Higher-order moments I]\label{lem.hom}
Let $p\ge 2$.
There exists a constant $C(p,u^0,T)$, which is independent of $\eps$, such that
\begin{align}
  \E\|u^\eps\|_{L^\infty(0,T;L^1(\dom))}^p &\le C(p,u^0,T), \label{5.L1p} \\
	a_{i0}^{p/2}\E\|(u_i^\eps)^{1/2}\|_{L^2(0,T;H^1(\dom))}^p
	+ a_{ii}^{p/2}\E\|u_i^\eps\|_{L^2(0,T;H^1(\dom))}^p &\le C(p,u^0,T), \label{5.H1p} \\
	a_{ij}^{p/2}\E\|\na(u_i^\eps u_j^\eps)^{1/2}\|_{L^2(0,T;L^2(\dom))}^p &\le C(p,u^0,T).
	\label{5.nablap}
\end{align}
Moreover, we have
\begin{equation}
  \E\bigg(\eps\sup_{0<t<T}\|LR_\eps(v^\eps(t))\|_{L^2(\dom)}^2\bigg)^p
	+ \E\bigg(\sup_{0<t<T}\|v^\eps(t)\|_{D(L)'}\bigg)^p \le C(p,u^0,T). \label{5.vp}
\end{equation}
\end{lemma}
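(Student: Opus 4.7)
The plan is to repeat the It\^o-lemma computation behind Proposition \ref{prop.ent}, but now working with the $p$-th power of the entropy, exploiting the $p$-th moment Burkholder--Davis--Gundy inequality together with Assumption (A5), whose square-root form is tailored precisely for this purpose.

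First I would restart from the It\^o identity \eqref{4.H0}, stopped at the localizing time $\tau_R$, using Lemma \ref{lem.v0} to bound the initial contribution by $\int_\dom h(u^0)\,\dd x$. Rearranging as in \eqref{4.Hest} gives an inequality of the form $X(t) \le \int_\dom h(u^0)\,\dd x + M(t) + D(t)$, where
$$X(t) := \int_\dom h(u^\eps(t))\,\dd x + \tfrac{\eps}{2}\|Lw^\eps(t)\|_{L^2(\dom)}^2 + \int_0^t\int_\dom \na w^\eps : B(w^\eps)\na w^\eps\,\dd x\,\dd s,$$
$M$ is the stochastic integral, and $D$ is the trace drift coming from It\^o's correction. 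I then take $\sup_{0<t<T_R}$ with $T_R \le T \wedge \tau_R$, raise to the $p$-th power, and take expectations.

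The central step is the BDG inequality with exponent $p$, namely $\E\sup_{0<t<T_R}|M(t)|^p \le C_p\, \E \langle M\rangle_{T_R}^{p/2}$. Assumption (A5), raised to the $p$-th power, bounds the quadratic variation by $\langle M\rangle_{T_R}^{p/2} \le C_h^p\,(1 + \int_0^{T_R}\int_\dom h(u^\eps)\,\dd x\,\dd s)^p$, and a similar bound controls $\E\sup_{0<t<T_R} D(t)^p$. Combining these with the Jensen estimate $(\int_0^t f\,\dd s)^p \le t^{p-1}\int_0^t f^p\,\dd s$ and the $L^\infty(\Omega)$ bound on $u^0$ from (A2) (which ensures $\E(\int_\dom h(u^0)\,\dd x)^p < \infty$ thanks to the quadratic growth of $h$) yields
$$F(t) := \E\sup_{0<s<t}\bigg(\int_\dom h(u^\eps(s))\,\dd x\bigg)^p \le C(p,u^0) + C_{p,T}\int_0^t F(s)\,\dd s.$$
Gronwall then gives $F(T) \le C(p,u^0,T)$, and \eqref{5.L1p} follows from the linear bound $|u|\le C(1+h(u))$ in Hypothesis (H1).

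The remaining estimates fall out of the same $p$-th moment entropy inequality. The left-hand side of the stopped inequality also controls the gradient terms from \eqref{5.RBR}, so \eqref{5.H1p} and \eqref{5.nablap} follow after an application of the Poincar\'e--Wirtinger inequality. Estimate \eqref{5.vp} follows from $v^\eps = u^\eps + \eps L^*LR_\eps(v^\eps)$, the dual bound in \eqref{3.LL}, and the $p$-th moment controls on $\|u^\eps\|_{L^1(\dom)}$ and $\eps\|LR_\eps(v^\eps)\|_{L^2(\dom)}^2$. Finally I would pass $R \to \infty$ by Fatou, since all the derived bounds are uniform in $R$. The main technical subtlety is the matching of the square-root structure of (A5) with the $p/2$ exponent in BDG: raising (A5) to the $p$-th power produces exactly a polynomial in $\int_0^t\int_\dom h(u^\eps)\,\dd x\,\dd s$ of order $p$, which after Jensen gives a time-integrated bound amenable to Gronwall; any other scaling in (A5) would not close the loop, so the lemma is in some sense sharp for the chosen assumptions.
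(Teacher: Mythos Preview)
Your proposal is correct and follows essentially the same route as the paper: raise the It\^o entropy identity \eqref{4.H0} to the $p$-th power, apply the $p$-th moment Burkholder--Davis--Gundy inequality, use Assumption (A5) to control both the martingale and the trace terms by powers of $\int_0^t\int_\dom h(u^\eps)\,\dd x\,\dd s$, and close with Gronwall; the remaining estimates \eqref{5.H1p}--\eqref{5.vp} then follow from \eqref{5.RBR}, Poincar\'e--Wirtinger, and the decomposition $v^\eps=u^\eps+\eps L^*LR_\eps(v^\eps)$ exactly as you describe. The only cosmetic difference is that the paper works directly with the global solution (already obtained from the $p=1$ entropy bound) rather than passing $R\to\infty$ via Fatou at the end.
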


\begin{proof}
Proceeding as in the proof of Proposition \ref{prop.ent}
and taking into account identity \eqref{4.H0} and inequality \eqref{5.RBR}, we obtain
\begin{align*}
  {\mathcal H}(v^\eps(&t)) 
	+ \int_0^T\int_\dom\sum_{i=1}^n\pi_i\big(4a_{i0}|\na(u^\eps)^{1/2}|^2 
	+ 2a_{ii}|\na u^\eps|^2\big)\dd x\dd s \\
	&\phantom{xx}{}+ 2\E\int_0^T\int_\dom\sum_{i\neq j}\pi_i a_{ij}
	|\na (u^\eps_i u^\eps_j)^{1/2}|^2\dd x\dd s \\
	&\le {\mathcal H}(v^\eps(0)) +  \sum_{k=1}^\infty\sum_{i,j=1}^n\int_0^t\int_\dom
	\pi_i\log u^\eps_i(s)\sigma_{ij}(u^\eps(s))e_k\dd x\dd W_j^k(s) \\
	&\phantom{xx}{}
	+ \frac12\sum_{k=1}^\infty\int_0^t\int_\dom\operatorname{tr}
	\big[(\sigma(u^\eps(s))e_k)^T h''(u^\eps(s))\sigma(u^\eps(s))e_k\big]\dd x\dd s,
\end{align*}
recalling Definition \ref{3.ent} of ${\mathcal H}(v^\eps)$. 
We raise this inequality to the $p$th power, take the expectation, apply
the Burkholder--Davis--Gundy inequality (for the second term on the right-hand side),
and use Assumption (A5) to find that
\begin{align}\label{5.est3}
  \E&\bigg(\sup_{0<t<T}\int_\dom h(u^\eps(t))\dd x
	+ \eps\sup_{0<t<T}\|LR_\eps(v^\eps(t))\|_{L^2(\dom)}^2\bigg)^p \\
	&\phantom{xx}{}+ C\E\bigg(\int_0^T\int_\dom\sum_{i=1}^n\pi_i a_{i0}
	|\na(u_i^\eps(s))^{1/2}|^2\dd x\dd s\bigg)^p \nonumber \\
	&\phantom{xx}{}+ C\E\bigg(\int_0^T\int_\dom\sum_{i=1}^n\pi_i a_{ii}
	|\na u_i^\eps(s)|^2\dd x\dd s\bigg)^p \nonumber \\
	&\phantom{xx}{}+ C\E\bigg(\int_0^T\int_\dom\sum_{i\neq j}\pi_i a_{ij}
	|\na (u^\eps_i u^\eps_j)^{1/2}|^2\dd x\dd s\bigg)^p \nonumber \\
	&\le C(p,u^0) + C\E\bigg(\int_0^T\sum_{k=1}^\infty\sum_{i,j=1}^n
	\bigg(\int_\dom\log u^\eps_i(s)\sigma_{ij}(u^\eps(s))e_k\dd x\bigg)^2
	\dd s\bigg)^{p/2} \nonumber \\
	&\phantom{xx}{}+ C\E\bigg(\int_0^T\sum_{k=1}^\infty\int_\dom\operatorname{tr}
	\big[(\sigma(u^\eps(s))e_k)^T h''(u^\eps(s))(\sigma(u^\eps(s))e_k)\big]\dd x\dd s
	\bigg)^p \nonumber \\
	&\le C(p,u^0) + C\E\bigg(\int_0^T\int_\dom h(u^\eps(s))\dd x\dd s\bigg)^p.
	\nonumber 
\end{align}
We neglect the expression $\eps\|LR_\eps(v^\eps(t))\|_{L^2(\dom)}^2$ and
apply Gronwall's lemma. Then, taking into account the fact that the entropy
dominates the $L^1(\dom)$ norm, thanks to Hypothesis (H1), 
and applying the Poincar\'e--Wirtinger inequality,
we obtain estimates \eqref{5.L1p}--\eqref{5.nablap}. Going back to \eqref{5.est3},
we infer that
\begin{align*}
  \E\bigg(\eps\sup_{0<t<T}\|LR_\eps(v^\eps(t))\|_{L^2(\dom)}^2\bigg)^p
	&\le C(p,u^0) + C(p,T)\E\int_0^T\bigg(\int_\dom h(u^\eps(s))\dd x\bigg)^p \dd s \\
	&\le C(p,u^0,T).
\end{align*}
Combining the previous estimates and arguing as in the proof of Lemma \ref{lem.est},
we have
\begin{align*}
  \E&\bigg(\sup_{0<t<T}\|v^\eps(t)\|_{D(L)'}\bigg)^p
	= \E\bigg(\sup_{0<t<T}\|u^\eps(t)+\eps L^*LR_\eps(v^\eps(t))\|_{D(L)'}\bigg)^p \\
	&\le C\E\bigg(\sup_{0<t<T}\|u^\eps(t)\|_{L^1(\dom)}\bigg)^p
	+ C\E\bigg(\eps^2\sup_{0<t<T}\|LR_\eps(v^\eps(t))\|_{L^2(\dom)}^2\bigg)^{p/2}
	\le C(p,u^0,T).
\end{align*}
This ends the proof.
\end{proof}

Using the Gagliardo--Nirenberg inequality, we can derive further estimates. 
We recall that $Q_T=\dom\times(0,T)$.

\begin{lemma}[Higher-order moments II]\label{lem.hom2}
Let $p\ge 2$. There exists a constant
$C(p,u^0,T)$ $>0$, which is independent of $\eps$, such that
\begin{align}
  \E\|u_i^\eps\|_{L^{2+2/d}(Q_T)}^p &\le C(p,u^0,T), \label{5.22d} \\
	\E\|u_i^\eps\|_{L^{2+4/d}(0,T;L^2(\dom))}^p &\le C(p,u^0,T). \label{5.24d}
\end{align}
\end{lemma}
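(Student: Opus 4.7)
My plan is to derive both estimates from the uniform bounds in Lemma \ref{lem.hom} (namely, $L^\infty(0,T;L^1(\dom))$ and $L^2(0,T;H^1(\dom))$ moment estimates for $u_i^\eps$, which are available thanks to the assumption $a_{ii}>0$) by applying the Gagliardo--Nirenberg inequality pointwise in time and then integrating. The point is to choose the interpolation exponent so that the time integral of $\|\na u_i^\eps(t)\|_{L^2(\dom)}^2$ appears with exponent exactly one, which is precisely what is known to be finite.

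\emph{Step 1: pointwise-in-time Gagliardo--Nirenberg.} For the first estimate, I use the interpolation
\[
  \|f\|_{L^r(\dom)} \le C_{GN}\big(\|\na f\|_{L^2(\dom)}^\theta\|f\|_{L^1(\dom)}^{1-\theta} + \|f\|_{L^1(\dom)}\big),
\]
valid on the Lipschitz domain $\dom$, with the scaling relation $1/r = 1-\theta(1/2+1/d)$. Taking $r=2+2/d$ gives $\theta = 2/r$, so that $\theta r = 2$ and $(1-\theta)r = r-2 = 2/d$. Raising to the $r$-th power and integrating in time yields
\[
  \|u_i^\eps\|_{L^r(Q_T)}^r \le C\|u_i^\eps\|_{L^\infty(0,T;L^1(\dom))}^{r-2}\|\na u_i^\eps\|_{L^2(Q_T)}^2 + CT\|u_i^\eps\|_{L^\infty(0,T;L^1(\dom))}^r.
\]
For \eqref{5.24d}, I apply the same Gagliardo--Nirenberg inequality to the $L^2(\dom)$ norm, which gives $\theta = d/(d+2)$ and, choosing $s=2+4/d$ so that $\theta s=2$ and $(1-\theta)s = 4/d$,
\[
  \|u_i^\eps\|_{L^s(0,T;L^2(\dom))}^s \le C\|u_i^\eps\|_{L^\infty(0,T;L^1(\dom))}^{4/d}\|\na u_i^\eps\|_{L^2(Q_T)}^2 + CT\|u_i^\eps\|_{L^\infty(0,T;L^1(\dom))}^s.
\]

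\emph{Step 2: passage to the $p$-th moment.} Raising each of the two deterministic inequalities to the power $p/r$ (resp.\ $p/s$), taking expectations and applying H\"older's inequality with conjugate exponents $r/(r-2)$ and $r/2$ (resp.\ $s/(s-2)$ and $s/2$), I estimate for \eqref{5.22d}
\[
  \E\|u_i^\eps\|_{L^r(Q_T)}^p \le C\big(\E\|u_i^\eps\|_{L^\infty(0,T;L^1(\dom))}^p\big)^{(r-2)/r}\big(\E\|\na u_i^\eps\|_{L^2(Q_T)}^p\big)^{2/r} + C\E\|u_i^\eps\|_{L^\infty(0,T;L^1(\dom))}^p,
\]
and analogously for \eqref{5.24d}. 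Both factors on the right-hand side are controlled uniformly in $\eps$ by \eqref{5.L1p} and \eqref{5.H1p} of Lemma \ref{lem.hom} (using $a_{ii}>0$), which completes the proof.

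\emph{Main obstacle.} The estimates are essentially bookkeeping once the correct Gagliardo--Nirenberg exponent pair $(\theta,r)$ (resp.\ $(\theta,s)$) is identified; the only delicate point is to verify that the chosen $\theta \in (0,1)$ is admissible for every $d\ge 1$ (for $d=1$, one has $r=4$, $s=6$, and the formula still produces $\theta\in(0,1)$), and to apply H\"older's inequality correctly so that the random factor involving $\|\na u_i^\eps\|_{L^2(Q_T)}$ enters precisely with exponent $p$, matching the bound provided by \eqref{5.H1p}. No additional stochastic argument is needed: the pathwise Gagliardo--Nirenberg inequality combined with Lemma \ref{lem.hom} suffices.
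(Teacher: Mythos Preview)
Your proof is correct and follows essentially the same route as the paper: apply the Gagliardo--Nirenberg inequality between $L^1(\dom)$ and $H^1(\dom)$ with the exponent $\theta$ chosen so that $\theta r=2$ (resp.\ $\theta s=2$), pull out the $L^\infty(0,T;L^1(\dom))$ norm, and then split the expectation by H\"older's inequality to land on \eqref{5.L1p} and \eqref{5.H1p}. The only cosmetic difference is that the paper uses the full $H^1$ norm in the interpolation (hence no additive lower-order term) and applies Cauchy--Schwarz in $\Omega$ rather than your H\"older pair $(r/(r-2),r/2)$.
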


\begin{proof}
We apply the Gagliardo--Nirenberg inequality:
\begin{align*}
  \E&\bigg(\int_0^T\|u_i^\eps\|_{L^r(\dom)}^s\dd t\bigg)^{p/s}
	\le C\E\bigg(\int_0^T\|u_i^\eps\|_{H^1(\dom)}^{\theta s}
	\|u_i^\eps\|_{L^1(\dom)}^{(1-\theta)s}\dd t\bigg)^{p/s} \\
	&\le C\E\bigg(\|u_i^\eps\|_{L^\infty(0,T;L^1(\dom))}^{(1-\theta)s}\int_0^T
	\|u_i^\eps\|_{H^1(\dom)}^{2}\dd t\bigg)^{p/s} \\
	&\le C\big(\E\|u_i^\eps\|_{L^\infty(0,T;L^1(\dom))}^{2(1-\theta)p}\big)^{1/2}
	\big(\E\|u_i^\eps\|_{L^2(0,T;H^1(\dom))}^{4p/s}\big)^{1/2} \le C,
\end{align*}
where $r>1$ and $\theta\in(0,1]$ are related by $1/r=1-\theta(d+2)/(2d)$ 
and $s=2/\theta\ge 2$. The right-hand side is
bounded in view of estimates \eqref{5.L1p} and \eqref{5.H1p}.
Estimate \eqref{5.22d} follows after choosing $r=s$, implying that $r=2+2/d$, and
\eqref{5.24d} follows from the choice $s=2+4/d$, implying that
$r=2$. 
\end{proof}

Next, we show some bounds for the fractional time derivative of $u^\eps$.
This result is used to establish the tightness of the laws of $(u^\eps)$ in a
sub-Polish space. Alternatively, the tightness property can be proved by
verifying the Aldous condition; see, e.g., \cite{DJZ19}.
We recall the definition of the Sobolev--Slobodeckij spaces. Let $X$ be a vector
space and let $p\ge 1$, $\alpha\in(0,1)$. 
Then $W^{\alpha,p}(0,T;X)$ is the set of
all functions $v\in L^p(0,T;X)$ for which 
\begin{align*}
  \|v\|_{W^{\alpha,p}(0,T;X)}^p 
	&= \|v\|_{L^p(0,T;X)}^p + |v|_{W^{\alpha,p}(0,T;X)}^p \\
	&= \int_0^T\|v\|_X^p \dd t
	+ \int_0^T\int_0^T\frac{\|v(t)-v(s)\|_X^p}{|t-s|^{1+\alpha p}}\dd t\dd s < \infty.
\end{align*}
With this norm, $W^{\alpha,p}(0,T;X)$ becomes a Banach space.
We need the following technical lemma, which is proved in Appendix \ref{sec.proofs}.

\begin{lemma}\label{lem.g}
Let $g\in L^1(0,T)$ and $\delta<2$, $\delta\neq 1$. Then
\begin{equation}\label{5.int}
  \int_0^T\int_0^T|t-s|^{-\delta}\int_{s\wedge t}^{t\lor s}g(r)\dd r\dd t\dd s 
	< \infty.
\end{equation}
\end{lemma}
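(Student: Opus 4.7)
The plan is to reduce to the region $s < t$ by symmetry, then swap the order of integration in the inner double integral, carry out the elementary $s$-integral explicitly, and finally bound the remaining integral against $\|g\|_{L^1(0,T)}$. The two hypotheses $\delta \neq 1$ and $\delta < 2$ will appear naturally as the exact conditions needed for the two resulting antiderivatives to be finite.

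First I would use the symmetry in $(s,t)$ to write the triple integral as
\begin{equation*}
  I = 2\int_0^T\int_0^t (t-s)^{-\delta}\int_s^t g(r)\,\dd r\,\dd s\,\dd t,
\end{equation*}
and then apply Fubini in the inner $(s,r)$-integral (over the triangle $\{0<s<r<t\}$) to obtain
\begin{equation*}
  I = 2\int_0^T\int_0^t g(r)\left(\int_0^r (t-s)^{-\delta}\,\dd s\right)\dd r\,\dd t.
\end{equation*}
A direct computation with the substitution $u=t-s$ gives $\int_0^r (t-s)^{-\delta}\dd s = (1-\delta)^{-1}\bigl[t^{1-\delta} - (t-r)^{1-\delta}\bigr]$, which is exactly where the hypothesis $\delta \neq 1$ enters.

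Next I would split into the two cases. If $\delta < 1$, then $1-\delta>0$ and the inner integral is bounded by $T^{1-\delta}/(1-\delta)$ uniformly in $r$ and $t$, so
\begin{equation*}
  I \le \frac{2T^{1-\delta}}{1-\delta}\int_0^T\int_0^t g(r)\,\dd r\,\dd t \le \frac{2T^{2-\delta}}{1-\delta}\|g\|_{L^1(0,T)}.
\end{equation*}
If $1 < \delta < 2$, then $1-\delta<0$ and one has the pointwise bound $\int_0^r (t-s)^{-\delta}\dd s \le (\delta-1)^{-1}(t-r)^{1-\delta}$; applying Fubini once more in $(r,t)$ yields
\begin{equation*}
  I \le \frac{2}{\delta-1}\int_0^T g(r)\int_r^T (t-r)^{1-\delta}\,\dd t\,\dd r \le \frac{2T^{2-\delta}}{(\delta-1)(2-\delta)}\|g\|_{L^1(0,T)},
\end{equation*}
and here the hypothesis $\delta < 2$ is precisely what guarantees that the $t$-integral is finite. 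Combining the two cases proves \eqref{5.int}.

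There is no genuine obstacle in this argument; the only subtlety is bookkeeping the two cases to see that $\delta=1$ is the integrability threshold for the $s$-integral while $\delta=2$ is the threshold for the subsequent $t$-integral, so that the stated range $\delta<2$, $\delta\neq 1$ is sharp for this method.
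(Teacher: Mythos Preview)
Your proof is correct. The paper takes a closely related but distinct route: after the same symmetry reduction to $s<t$, it integrates by parts in $t$ (integrating $(t-s)^{-\delta}$ and differentiating $\int_s^t g(r)\,\dd r$), obtaining a boundary term at $t=T$ plus an integral of $(t-s)^{1-\delta}g(t)$; the vanishing of the boundary contribution at $t=s$ is justified via the Lebesgue differentiation theorem together with $1-\delta>-1$. Your Fubini-based argument is more direct, avoids this limit, and yields explicit constants, at the price of a case split on the sign of $1-\delta$; the paper's integration-by-parts version handles both ranges $\delta<1$ and $1<\delta<2$ in one stroke. Both arguments are elementary and of comparable length. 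One minor remark: since the lemma allows signed $g\in L^1(0,T)$, your swaps of integration order are strictly speaking Tonelli applied to $|g|$, which then justifies Fubini for $g$ itself; the paper's proof glosses over the same point.
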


We obtain the following uniform bounds for $u^\eps$ and $v^\eps$ in 
Sobolev--Slobodeckij spaces.

\begin{lemma}[Fractional time regularity]\label{lem.frac}
Let $\alpha<1/2$. 
There exists a constant $C(u^0,T)>0$ such that, for $p:=(2d+4)/d >2$,
\begin{align}
  \E\|u^\eps\|_{W^{\alpha,p}(0,T;D(L)')}^p &\le C(u^0,T), \nonumber\\ 
	\eps^p\E\|L^*LR_\eps(v^\eps)\|_{W^{\alpha,p}(0,T;D(L)')}^p 
	+ \E\|v^\eps\|_{W^{\alpha,p}(0,T;D(L)')}^p &\le C(u^0,T). \label{5.LLR}
\end{align}
\end{lemma}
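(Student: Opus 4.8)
The plan is to split the estimate into the contributions from the drift (deterministic Bochner integral) and the stochastic (It\^o) integral, using the formulation \eqref{4.defsol} for $v^\eps$. Since $u^\eps = u(R_\eps(v^\eps))$ and $v^\eps = u^\eps + \eps L^*LR_\eps(v^\eps)$, a bound on $v^\eps$ and on $\eps L^*LR_\eps(v^\eps)$ in $W^{\alpha,p}(0,T;D(L)')$ gives the bound on $u^\eps$ by the triangle inequality; and the term $\eps L^*LR_\eps(v^\eps)$ is controlled directly from \eqref{5.LLR} (or from the $L^\infty(0,T;D(L)')$-type bound \eqref{5.vp}) because $L^*L$ is continuous from $D(L)$ into $D(L)'$ and $\eps\|LR_\eps(v^\eps)\|_{L^2}$ is bounded in every moment. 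So the heart of the matter is the $W^{\alpha,p}$-bound on $v^\eps$ itself.

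For $v^\eps$, I would write, for $0\le s<t\le T$,
\[
  v^\eps(t)-v^\eps(s) = \int_s^t \diver\big(B(R_\eps(v^\eps(r)))\na R_\eps(v^\eps(r))\big)\dd r
  + \int_s^t \sigma\big(u(R_\eps(v^\eps(r)))\big)\dd W(r) =: J_1(s,t)+J_2(s,t).
\]
For $J_1$: from the estimates established in the proof of Theorem~\ref{thm.approx} (or more cleanly, recalling \eqref{1.form} and the self-diffusion gradient bounds \eqref{5.H1}, \eqref{5.22d}), the integrand $\diver(B(R_\eps(v^\eps))\na R_\eps(v^\eps)) = \diver(\sum_j A_{ij}(u^\eps)\na u_j^\eps)$ is bounded in $L^{p_0}(0,T;D(L)')$ for a suitable $p_0$ uniformly in $\eps$ — indeed, pairing against $\phi\in D(L)\hookrightarrow W^{1,\infty}$, one has $|\langle\diver(A_{ij}(u^\eps)\na u_j^\eps),\phi\rangle| \le \|A_{ij}(u^\eps)\|_{L^2}\|\na u_j^\eps\|_{L^2}\|\na\phi\|_{L^\infty}$, and $A_{ij}(u^\eps)$ is bounded in $L^2(Q_T)$ by \eqref{5.22d} while $\na u_j^\eps\in L^2(Q_T)$ by \eqref{5.H1p}; this gives an $L^1(0,T;D(L)')$ bound, and interpolating with better integrability one gets $L^{p_0}$ with $p_0>1$. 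Then $\|J_1(s,t)\|_{D(L)'}\le |t-s|^{1-1/p_0}\|\cdot\|_{L^{p_0}(0,T;D(L)')}$ by H\"older, so $\|J_1(s,t)\|_{D(L)'}^p \le |t-s|^{p(1-1/p_0)}(\cdots)$, and the double integral $\int_0^T\int_0^T |t-s|^{-1-\alpha p}\|J_1(s,t)\|^p\dd t\dd s$ converges provided $p(1-1/p_0) - 1-\alpha p > -1$, i.e.\ $\alpha < 1-1/p_0$; since $p_0$ can be taken close enough to $1+2/d$ (from \eqref{5.22d}) and $\alpha<1/2$, this holds for $d$ in the relevant range — more robustly one combines this with Lemma~\ref{lem.g} applied to $g(r) = \|$integrand$(r)\|_{D(L)'}^{p}$ (with $\delta = 1+\alpha p < 2$ since $\alpha<1/2$ forces $\alpha p < 1$ for $p$ near $2$, but here $p=(2d+4)/d$ may be large, so the H\"older route with $p_0$ is the cleaner one).

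For $J_2$: this is the main obstacle, and the standard tool is the Kolmogorov-type continuity estimate for stochastic integrals (see, e.g., \cite[Lemma 2.1]{FlGa95} or \cite{DaZa14}): for $p>2$,
\[
  \E\|J_2(s,t)\|_{D(L)'}^p \le C_p\, |t-s|^{p/2-1}\,\E\int_s^t \|\sigma(u(R_\eps(v^\eps(r))))\|_{\L_2(U;D(L)')}^p\dd r.
\]
By Assumption~(A4) (or the strengthened version in Theorem~\ref{thm.skt2}), $\|\sigma(u^\eps)\|_{\L_2(U;L^2(\dom))}\le C_\sigma(1+\|u^\eps\|_{L^2(\dom)})$ and the embedding $L^2(\dom)\hookrightarrow D(L)'$ is continuous, so this is bounded by $C(1+\|u^\eps\|_{L^2(\dom)}^p)$; integrating in $r$ and using \eqref{5.24d} (which gives $u^\eps\in L^{2+4/d}(0,T;L^2(\dom))$ in every moment, and $p=(2d+4)/d$ is exactly the exponent $2+4/d$) shows $\E\int_0^T\|\sigma(u^\eps(r))\|^p_{\L_2(U;D(L)')}\dd r\le C(p,u^0,T)$ uniformly in $\eps$ — this is precisely why that peculiar value of $p$ was chosen. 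Then
\[
  \E\int_0^T\int_0^T \frac{\|J_2(s,t)\|_{D(L)'}^p}{|t-s|^{1+\alpha p}}\dd t\dd s
  \le C_p\int_0^T\int_0^T |t-s|^{p/2-2-\alpha p}\Big(\E\int_{s\wedge t}^{s\vee t}\|\sigma(u^\eps)\|^p\dd r\Big)\dd t\dd s,
\]
and since $\alpha<1/2$ we have $\delta := 2+\alpha p - p/2 = 2 - p(1/2-\alpha) < 2$, so Lemma~\ref{lem.g} applied with $g(r)=\E\|\sigma(u^\eps(r))\|_{\L_2(U;D(L)')}^p\in L^1(0,T)$ yields finiteness, with a bound depending only on $p$, $u^0$, $T$ — not on $\eps$. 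The $L^p(0,T;D(L)')$ part of the $W^{\alpha,p}$-norm is immediate from \eqref{5.vp}. Adding the two pieces and recalling $u^\eps = v^\eps - \eps L^*LR_\eps(v^\eps)$ completes the proof; the delicate point throughout is to keep every constant independent of $\eps$, which is guaranteed because all the inputs \eqref{5.H1p}, \eqref{5.22d}, \eqref{5.24d}, \eqref{5.vp} are $\eps$-uniform.
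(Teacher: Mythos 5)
Your decomposition ($v^\eps$ first, then $u^\eps = v^\eps - \eps L^*LR_\eps(v^\eps)$; split the increment into drift $J_1$ and stochastic $J_2$) and your treatment of $J_2$ (Kolmogorov/BDG estimate, the embedding $L^2(\dom)\hookrightarrow D(L)'$, the $L^{2+4/d}(0,T;L^2(\dom))$ bound \eqref{5.24d}, and Lemma~\ref{lem.g} with $\delta=2+\alpha p-p/2<2$) are the same as in the paper and correct. However, there are two genuine gaps.

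\textbf{The drift term $J_1$.} Your ``H\"older route'' --- bound the integrand in $L^{p_0}(0,T;D(L)')$, pull out $|t-s|^{1-1/p_0}$, and require $\alpha<1-1/p_0$ --- cannot reach the stated $\alpha<1/2$ for $d\ge 3$. The best $p_0$ that the available estimates give for $\|\,\diver\sum_j A_{ij}(u^\eps)\na u_j^\eps\|_{D(L)'}$ is on the order of $(d+2)/(d+1)$ (the product $\|u^\eps\|_{L^2(\dom)}\|\na u^\eps\|_{L^2(\dom)}$ is in $L^{(d+2)/(d+1)}(0,T)$ by H\"older between \eqref{5.24d} and \eqref{5.H1p}), so $1-1/p_0=1/(d+2)<1/2$ when $d\ge 2$. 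Your dismissal of Lemma~\ref{lem.g} --- that it would require $1+\alpha p<2$ --- rests on applying it directly to $g=\|\text{integrand}\|_{D(L)'}^p$; that is not how it is used here. The correct procedure is to first apply H\"older \emph{inside} the inner $\int_{s\wedge t}^{t\vee s}$ integral to pull out a power $|t-s|^{p/2}$ (for the $\|\na u^\eps\|_{L^2}$ piece) or $|t-s|^{p/(d+2)}$ (for the $\|u^\eps\|_{L^2}\|\na u^\eps\|_{L^2}$ piece, after a further H\"older), \emph{and then} apply Lemma~\ref{lem.g} to the remaining $\int_{s\wedge t}^{t\vee s}g(r)\dd r$, where the effective $\delta$ is $1+\alpha p-p/2$ resp.\ $1+\alpha p-p/(d+2)$. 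Keeping the $s,t$-dependence through $\int_{s\wedge t}^{t\vee s}$ and invoking Lemma~\ref{lem.g} buys an extra factor of $|t-s|$, which is exactly what makes the binding condition $\alpha<1/2$ rather than $\alpha<1/(d+2)$. You also need to split $J_1$ into two pieces with different H\"older exponents; lumping them together loses the sharp exponent.

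\textbf{The term $\eps L^*LR_\eps(v^\eps)$.} You say this is ``controlled directly from \eqref{5.LLR}'', but \eqref{5.LLR} is the statement being proved --- that is circular. Your fallback, the $L^\infty(0,T;D(L)')$ moment bound \eqref{5.vp}, controls the $L^p(0,T;D(L)')$ part of the norm but gives no information about time increments, hence cannot control the Gagliardo seminorm $|{\cdot}|_{W^{\alpha,p}}$. The missing ingredient is the Lipschitz bound of Lemma~\ref{lem.Reps}: $\|L^*LR_\eps(v^\eps(t))-L^*LR_\eps(v^\eps(s))\|_{D(L)'}\le\|R_\eps(v^\eps(t))-R_\eps(v^\eps(s))\|_{D(L)}\le\eps^{-1}C\|v^\eps(t)-v^\eps(s)\|_{D(L)'}$, which, after multiplying by $\eps$, cancels the $\eps^{-1}$ and reduces the seminorm of $\eps L^*LR_\eps(v^\eps)$ to that of $v^\eps$, uniformly in $\eps$. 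Without this observation the proof of the $u^\eps$-bound does not close.
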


Since $p>2$, we can choose $\alpha<1/2$ such that $\alpha p>1$.
Then the continuous embedding $W^{\alpha,p}(0,T)\hookrightarrow C^{0,\beta}([0,T])$
for $\beta=\alpha-1/p>0$ implies that 
\begin{equation}\label{5.C0}
  \E\|u^\eps\|_{C^{0,\beta}([0,T];D(L)')}^p\le C(u^0,T).
\end{equation}

\begin{proof}
First, we derive the $W^{\alpha,p}$ estimate for $v^\eps$ and then we conclude
the estimate for $u^\eps$ from the definition 
$v^\eps = u^\eps + \eps L^*LR_\eps(v^\eps)$
and Lemma \ref{lem.hom2}.
Equation \eqref{4.approx} reads in terms of $u^\eps$ as
$$
  \dd v^\eps_i = \diver\bigg(\sum_{j=1}^n A_{ij}(u^\eps)\na u^\eps_j\bigg)\dd t
	+ \sum_{j=1}^n\sigma_{ij}(u^\eps)\dd W_j, \quad i=1,\ldots,n.
$$
We know from \eqref{5.vp} that $\E\|v^\eps\|_{L^\infty(0,T;D(L)')}^p$
is bounded. Thus, to prove the bound for the second term in \eqref{5.LLR},
it remains to estimate the following seminorm:
\begin{align*}
  \E|v_i^\eps&|_{W^{\alpha,p}(0,T;D(L)'}^p
	= \E\int_0^T\int_0^T\frac{\|v_i^\eps(t)-v_i^\eps(s)\|_{D(L)'}^p}{
	|t-s|^{1+\alpha p}}\dd t\dd s \\
  &\le \E\int_0^T\int_0^T|t-s|^{-1-\alpha p}\bigg\|\int_{s\wedge t}^{t\lor s}
	\diver\sum_{j=1}^n A_{ij}(u^\eps(r))\na u_j^\eps(r)\dd r\bigg\|_{D(L)'}^p
	\dd t\dd s \\
	&\phantom{xx}{}+ \E\int_0^T\int_0^T|t-s|^{-1-\alpha p}
	\bigg\|\int_{s\wedge t}^{t\lor s}
	\sum_{j=1}^n\sigma_{ij}(u^\eps(r))\dd W_j(r)\bigg\|_{D(L)'}^p\dd t\dd s \\
	&=: J_1 + J_2.
\end{align*}

We need some preparations before we can estimate $J_1$. We observe that
\begin{align*}
  \bigg\|\sum_{j=1}^n A_{ij}(u^\eps)\na u_j^\eps\bigg\|_{L^1(\dom)}
	&= \bigg\|\bigg(a_{i0}+2\sum_{j=1}^n a_{ij}u_j^\eps\bigg)\na u_i^\eps
	+ \sum_{j\neq i}a_{ij}u_i^\eps\na u_j^\eps\bigg\|_{L^1(\dom)} \\
	&\le C\|\na u_i^\eps\|_{L^1(\dom)} + C\|u^\eps\|_{L^2(\dom)}
	\|\na u^\eps\|_{L^2(\dom)}.
\end{align*}
It follows from the embedding $L^1(\dom)\hookrightarrow D(L)'$ that
\begin{align*}
  J_1 &\le \E\int_0^T\int_0^T|t-s|^{-1-\alpha p}\bigg(\int_{s\wedge t}^{t\lor s}
	\bigg\|\diver\sum_{j=1}^n A_{ij}(u^\eps(r))\na u_j^\eps(r)\bigg\|_{D(L)'}\dd r
	\bigg)^p\dd t\dd s \\
	&\le C\E\int_0^T\int_0^T|t-s|^{-1-\alpha p}\bigg(\int_{s\wedge t}^{t\lor s}
	\bigg\|\sum_{j=1}^nA_{ij}(u^\eps(r))\na u_j^\eps(r)\bigg\|_{L^1(\dom)}\dd r
	\bigg)^p\dd t\dd s \\
	&\le C\E\int_0^T\int_0^T|t-s|^{-1-\alpha p}\bigg(\int_{s\wedge t}^{t\lor s}
	\|\na u^\eps(r)\|_{L^2(\dom)}\dd r\bigg)^p \dd t\dd s \\
	&\phantom{xx}{}+ C\E\int_0^T\int_0^T|t-s|^{-1-\alpha p}
	\bigg(\int_{s\wedge t}^{t\lor s}
	\|u^\eps(r)\|_{L^2(\dom)}\|\na u^\eps(r)\|_{L^2(\dom)}\dd r\bigg)^p \dd t\dd s \\
	&=: J_{11} + J_{12}.
\end{align*}
We use H\"older's inequality and fix $p=(2d+4)/d$ to obtain
$$
  J_{11}\le C\E\int_0^T\int_0^T|t-s|^{-1-\alpha p}|t-s|^{p/2}
	\bigg(\int_{s\wedge t}^{t\lor s}\|\na u^\eps(r)\|_{L^2(\dom)}^2\dd r\bigg)^{p/2}
	\dd t\dd s.
$$
In view of estimate \eqref{5.H1p} and \eqref{5.int}, the right-hand side is finite if
$1+\alpha p-p/2<2$ or, equivalently, $\alpha<(d+1)/(d+2)$, and this
holds true since $\alpha<1/2$. Applying H\"older's inequality again, we have
\begin{align*}
	J_{12} &\le C\E\int_0^T\int_0^T|t-s|^{-1-\alpha p}\bigg(
	\int_{s\wedge t}^{t\lor s}\|u^\eps(r)\|_{L^2(\dom)}^2\dd r\bigg)^{p/2}
	\bigg(\int_{s\wedge t}^{t\lor s}\|\na u^\eps(r)\|_{L^2(\dom)}^2\dd r\bigg)^{p/2}
	\dd t \dd s \\
	&\le C\E\int_0^T\int_0^T|t-s|^{-1-\alpha p}|t-s|^{p/(d+2)}\bigg(
	\int_{s\wedge t}^{t\lor s}\|u^\eps(r)\|_{L^2(\dom)}^{(2d+4)/d}\dd r
	\bigg)^{pd/(2d+4)} \\
	&\phantom{xx}{}\times\bigg(\int_{s\wedge t}^{t\lor s}\|\na u^\eps(r)\|_{L^2(\dom)}^2
	\dd r\bigg)^{p/2}\dd t\dd s \\
	&\le C\bigg\{\E\bigg(\int_0^T\int_0^T|t-s|^{-1-\alpha p+p/(d+2)}
	\bigg(\int_{s\wedge t}^{t\lor s}\|u^\eps(r)\|_{L^2(\dom)}^{(2d+4)/d}\dd r\bigg)
	\dd t\dd s\bigg)^2\bigg\}^{1/2}\\
	&\phantom{xx}{}\times\bigg\{\E\bigg(\int_0^T\|\na u^\eps(r)
	\|_{L^2(\dom)}^2\dd r\bigg)^{p}\bigg\}^{1/2}.
\end{align*}
Because of estimates \eqref{5.H1p}, \eqref{5.24d}, and \eqref{5.int}, 
the right-hand side of is finite if $1+\alpha p-p/(d+2)<2$, which is equivalent to
$\alpha<1/2$. 

To estimate $J_2$, we use the embedding $L^2(\dom)\hookrightarrow D(L)'$, the
Burkholder--Davis--Gundy inequality, the linear growth of $\sigma$ from
Assumption (A4), and the H\"older inequality:
\begin{align*}
  J_2 &\le C\int_0^T\int_0^T|t-s|^{-1-\alpha p}
	\E\bigg\|\int_{s\wedge t}^{t\lor s}
	\sum_{j=1}^n\sigma_{ij}(u^\eps(r))\dd W_j(r)\bigg\|_{L^2(\dom)}^p\dd t\dd s \\
	&\le C\int_0^T\int_0^T|t-s|^{-1-\alpha p}
	\E\bigg(\int_{s\wedge t}^{t\lor s}\sum_{k=1}^\infty\sum_{j=1}^n
	\|\sigma_{ij}(u^\eps(r))e_k\|_{L^2(\dom)}^2\dd r
	\bigg)^{p/2}\dd t \dd s \\
	&\le C\int_0^T\int_0^T|t-s|^{-1-\alpha p+(p-2)/2}\int_{s\wedge t}^{t\lor s}\E
	\sum_{j=1}^n\big(1+\|u_j^\eps(r)\|_{L^2(\dom)}^p\big)\dd r\dd t\dd s.
\end{align*}
By \eqref{5.24d} and \eqref{5.int}, the right-hand side is finite if
$1+\alpha p-(p-2)/2<2$, which is equivalent to $\alpha<(3d+2)/(2d+4)$,
and this is valid due to the condition $\alpha<1/2$.
We conclude that $(v^\eps)$ is bounded in $L^p(\Omega;W^{\alpha,p}(0,T;D(L)'))$
with $p=(2d+4)/d$. 

Next, we derive the uniform bounds for $u^\eps$. By definition of $v^\eps$
and the $W^{\alpha,p}$ seminorm,
\begin{align*}
  \E|u^\eps|_{W^{\alpha,p}(0,T;D(L)')}^p
	&= \E|v^\eps - \eps L^*LR_\eps(v^\eps)|_{W^{\alpha,p}(0,T;D(L)')}^p \\
	&\le C\E\int_0^T\int_0^T\frac{\|v^\eps(t)-v^\eps(s)\|_{D(L)'}^p}{|t-s|^{1+\alpha p}}
	\dd t\dd s \\
	&\phantom{xx}{}+ C\E\int_0^T\int_0^T	\frac{\eps^p\|L^*LR_\eps(v^\eps(t))
	-L^*LR_\eps(v^\eps(s))\|_{D(L)'}^p}{|t-s|^{1+\alpha p}}\dd t\dd s.
\end{align*}
It follows from \eqref{3.LL} 
and the Lipschitz continuity of $R_\eps$ (Lemma \ref{lem.Reps}) that
\begin{align*}
  \|L^*LR_\eps(v^\eps(t))-L^*LR_\eps(v^\eps(s))\|_{D(L)'}
	&\le \|R_\eps(v^\eps(t))-R_\eps(v^\eps(s))\|_{L^2(\dom)} \\
	&\le \eps^{-1}C\|v^\eps(t)-v^\eps(s)\|_{D(L)'}.
\end{align*}
Then we find that
$$
  \E|u^\eps|_{W^{\alpha,p}(0,T;D(L)')}^p
	\le C\E\int_0^T\int_0^T\frac{\|v^\eps(t)-v^\eps(s)\|_{D(L)'}^p}{|t-s|^{1+\alpha p}}
	\dd t\dd s = C\E|v^\eps|_{W^{\alpha,p}(0,T;D(L)')},
$$
which finishes the proof.
\end{proof}


\subsection{Tightness of the laws of $(u^\eps)$}\label{sec.tight}

We show that the laws of $(u^\eps)$ are tight in a certain sub-Polish space. 
For this, we introduce the following spaces:
\begin{itemize}
\item $C^0([0,T];D(L)')$ is the space of continuous functions $u:[0,T]\to D(L)'$
with the topology $\mathbb{T}_1$ induced by the norm $\|u\|_{C^0([0,T];D(L)')}
=\sup_{0<t<T}\|u(t)\|_{D(L)'}$;
\item $L_w^2(0,T;H^1(\dom))$ is the space $L^2(0,T;H^1(\dom))$ with the weak topology
$\mathbb{T}_2$.
\end{itemize}
We define the space
$$
  \widetilde{Z}_T := C^0([0,T];D(L)')\cap L_w^2(0,T;H^1(\dom)),
$$
endowed with the topology $\widetilde{\mathbb{T}}$ that is the maximum of the
topologies $\mathbb{T}_1$ and $\mathbb{T}_2$. The space $\widetilde{Z}_T$
is a sub-Polish space, since $C^0([0,T];D(L)')$ is 
separable and metrizable and 
$$
  f_m(u) = \int_0^T(u(t),v_m(t))_{H^1(\dom)}\dd t, \quad
	u\in L_w^2(0,T;H^1(\dom)),\ m\in\N,
$$
where $(v_m)_m$ is a dense subset of $L^2(0,T;H^1(\dom))$,
is a countable family $(f_m)$ of
point-separating functionals acting on $L^2(0,T;H^1(\dom))$.
In the following, we choose a number $s^*\ge 1$ such that
\begin{equation}\label{5.sstar}
  s^* < \frac{2d}{d-2}\quad\mbox{if }d\ge 3, \quad s^*<\infty\quad\mbox{if }d=2, \quad
	s^* \le \infty\quad\mbox{if }d=1.
\end{equation}
Then the embedding $H^1(\dom)\hookrightarrow L^{s^*}(\dom)$ is compact.

\begin{lemma}\label{lem.tight1}
The set of laws of $(u^\eps)$ is tight in 
$$
  Z_T = \widetilde{Z}_T\cap L^2(0,T;L^{s^*}(\dom))
$$
with the topology $\mathbb{T}$ that is the maximum of $\widetilde{\mathbb{T}}$ 
and the topology induced by the $L^2(0,T;$ $L^{s^*}(\dom))$ norm, where
$s^*$ is given by \eqref{5.sstar}. 
\end{lemma}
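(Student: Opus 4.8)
The plan is to derive tightness from the uniform (in $\eps$) moment estimates collected in Lemmas \ref{lem.hom} and \ref{lem.frac}, fed into the abstract tightness criterion of Appendix \ref{sec.aux}, which extends \cite[Corollary 2.6]{BrMo14} to cover also the strong topology of $L^2(0,T;L^{s^*}(\dom))$. First I would fix the functional-analytic framework of that criterion by setting $V=H^1(\dom)$, $H=L^{s^*}(\dom)$, and $U'=D(L)'$. By Assumption (A1) and the choice \eqref{5.sstar} of $s^*$, the Rellich--Kondrachov theorem yields the compact embedding $H^1(\dom)\hookrightarrow\hookrightarrow L^{s^*}(\dom)$, while $L^{s^*}(\dom)\hookrightarrow L^2(\dom)=(L^2(\dom))'\hookrightarrow D(L)'$ continuously, since $m>d/2+1$. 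Thus the chain $V\hookrightarrow\hookrightarrow H\hookrightarrow U'$ required by the criterion is in place.

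Next I would check that the hypotheses of the criterion are exactly the bounds already available. Fix $p=(2d+4)/d>2$ and, as in Lemma \ref{lem.frac}, choose $\alpha<1/2$ with $\alpha p>1$. Since $a_{ii}>0$ in Theorem \ref{thm.skt}, estimate \eqref{5.H1p} gives $\sup_\eps\E\|u^\eps\|_{L^2(0,T;H^1(\dom))}^p\le C(u^0,T)$, and Lemma \ref{lem.frac} gives $\sup_\eps\E\|u^\eps\|_{W^{\alpha,p}(0,T;D(L)')}^p\le C(u^0,T)$. Given $\delta>0$, Chebyshev's inequality provides $R=R(\delta)>0$ such that, uniformly in $\eps$,
\[
  \Prob\big(\|u^\eps\|_{L^2(0,T;H^1(\dom))}>R\big)
  +\Prob\big(\|u^\eps\|_{W^{\alpha,p}(0,T;D(L)')}>R\big)\le\delta .
\]
Let $B_R$ denote the set of functions bounded by $R$ in both of these norms. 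The criterion of Appendix \ref{sec.aux} asserts that $B_R$ is relatively compact in $Z_T$: the bound in $W^{\alpha,p}(0,T;D(L)')$ together with $\alpha p>1$ gives equicontinuity via $W^{\alpha,p}(0,T)\hookrightarrow C^{0,\alpha-1/p}([0,T])$ and, combined with the $L^2(0,T;H^1(\dom))$ bound and $H^1(\dom)\hookrightarrow\hookrightarrow D(L)'$, relative compactness in $C^0([0,T];D(L)')$; the Aubin--Lions--Simon lemma with a fractional time derivative and the compactness $H^1(\dom)\hookrightarrow\hookrightarrow L^{s^*}(\dom)$ yields relative compactness in $L^2(0,T;L^{s^*}(\dom))$; and boundedness in the reflexive space $L^2(0,T;H^1(\dom))$ yields sequential relative compactness in $L^2_w(0,T;H^1(\dom))$. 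Taking $K_\delta:=\overline{B_R}$ in $Z_T$ produces a compact set with $\sup_\eps\Prob(u^\eps\notin K_\delta)\le\delta$, which is precisely tightness.

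Most of this is bookkeeping once the a priori estimates are granted; the one genuinely delicate point is the compactness in the \emph{strong} topology of $L^2(0,T;L^{s^*}(\dom))$, which is why the fractional time regularity of Lemma \ref{lem.frac} (rather than a mere Aldous-type modulus-of-continuity estimate) is needed, why the subcritical range \eqref{5.sstar} for $s^*$ must be imposed so that $H^1(\dom)\hookrightarrow\hookrightarrow L^{s^*}(\dom)$, and why the criterion of \cite{BrMo14} has to be (slightly) extended in Appendix \ref{sec.aux} to include this third topology. Everything else reduces to Chebyshev's inequality together with the standard Arzelà--Ascoli and Aubin--Lions--Simon compactness mechanisms, and to the elementary observation that bounded subsets of a reflexive Hilbert space are weakly relatively compact and metrizable.
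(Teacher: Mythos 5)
Your argument is correct and, up to reorganization, is the one the paper gives: Chebyshev's inequality converts the uniform moment bounds from Lemmas \ref{lem.hom} and \ref{lem.frac} into probability bounds, and the $L^2(0,T;H^1(\dom))$ and $W^{\alpha,p}(0,T;D(L)')$ estimates (with $\alpha p>1$) feed standard compactness devices — Simon/Aubin--Lions for the strong $L^2(0,T;L^{s^*}(\dom))$ topology, the Hölder embedding $W^{\alpha,p}(0,T)\hookrightarrow C^{0,\alpha-1/p}([0,T])$ for equicontinuity in $C^0([0,T];D(L)')$, and reflexivity for the weak $L^2_w(0,T;H^1(\dom))$ topology. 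One framing detail is slightly off: Lemma \ref{lem.tight} in Appendix \ref{sec.aux} is stated only for the single topology of $L^p(0,T;L^s(\dom))$, not for the full intersection $Z_T$, so it does not literally ``assert that $B_R$ is relatively compact in $Z_T$''; the paper accordingly invokes it only for the $L^2(0,T;L^{s^*}(\dom))$ part and treats $C^0([0,T];D(L)')$ (via the Aldous condition and \cite{Sim87}) and $L^2_w(0,T;H^1(\dom))$ (via weak compactness of closed balls) by separate arguments. Since the three sub-arguments you sketch after the colon are precisely these, the discrepancy is only in attribution, not in substance.
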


\begin{proof}
We apply Chebyshev's inequality for the first moment and use estimate
\eqref{5.C0} with $\beta=\alpha-1/p>0$, for any $\eta>0$ and $\delta>0$,
\begin{align*}
  \sup_{\eps>0}\,&\Prob\bigg(\sup_{\substack{s,t\in[0,T], \\ |t-s|\le\delta}}
	\|u^\eps(t)-u^\eps(s)\|_{D(L)'}>\eta\bigg)
	\le \sup_{\eps>0}\frac{1}{\eta}\E\bigg(
	\sup_{\substack{s,t\in[0,T], \\ |t-s|\le\delta}}
	\|u^\eps(t)-u^\eps(s)\|_{D(L)'}\bigg) \\
	&\le \frac{\delta^\beta}{\eta}\sup_{\eps>0}\E\bigg(\sup_{\substack{s,t\in[0,T], \\ 
	|t-s|\le\delta}}\frac{\|u^\eps(t)-u^\eps(s)\|_{D(L)'}}{|t-s|^\beta}\bigg)
	\le \frac{\delta^\beta}{\eta}\sup_{\eps>0}\E\|u^\eps\|_{C^{0,\beta}([0,T];D(L)'))}
	\le C\frac{\delta^\beta}{\eta}.
\end{align*}
This means that for all $\theta>0$ and
all $\eta>0$, there exists $\delta>0$ such that
$$
  \sup_{\eps>0}\,\Prob\bigg(\sup_{s,t\in[0,T], \, |t-s|\le\delta}
	\|u^\eps(t)-u^\eps(s)\|_{D(L)'}>\eta\bigg) \le \theta,
$$
which is equivalent to the Aldous 
condition \cite[Section 2.2]{BrMo14}. Applying \cite[Lemma 5, Theorem 3]{Sim87}
with the spaces $X=H^1(\dom)$ and $B=D(L)'$, we conclude that
$(u^\eps)$ is precompact in $C^0([0,T];D(L)')$. 
Then, proceeding as in the proof of the basic criterion for tightness 
\cite[Chapter II, Section 2.1]{Met88}, we see that 
the set of laws of $(u^\eps)$ is tight in $C^0([0,T];D(L)')$.

Next, by Chebyshev's inequality again and estimate \eqref{5.H1}, for all $K>0$,
$$
  \Prob\big(\|u^\eps\|_{L^2(0,T;H^1(\dom))} > K\big)
	\le \frac{1}{K^2}\E\|u^\eps\|_{L^2(0,T;H^1(\dom))}^2 \le \frac{C}{K^2}.
$$
This implies that for any $\delta>0$, there exists $K>0$ such that
$\Prob(\|u^\eps\|_{L^2(0,T;H^1(\dom))}\le K)\le 1-\delta$. Since closed balls with
respect to the norm of $L^2(0,T;H^1(\dom))$ are weakly compact, we infer that
the set of laws of $(u^\eps)$ is tight in $L^2_w(0,T;H^1(\dom))$. 

The tightness in $L^2(0,T;L^{s^*}(\dom))$ follows from  Lemma \ref{lem.tight} 
in Appendix \ref{sec.aux} with $p=q=2$ and $r=2+4/d$. 
\end{proof}

\begin{lemma}\label{lem.tighteps}
The set of laws of $(\sqrt{\eps}L^*LR_\eps(v^\eps))$ is tight in 
$$
  Y_T:= L_w^2(0,T;D(L)')\cap L^\infty_{w*}(0,T;D(L)')
$$ 
with the associated topology $\mathbb{T}_Y$. 
\end{lemma}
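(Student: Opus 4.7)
\medskip

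\noindent\textbf{Proof plan for Lemma \ref{lem.tighteps}.}
The strategy is to show that $\sqrt{\eps}L^*LR_\eps(v^\eps)$ is, with high probability uniform in $\eps$, contained in an $L^\infty(0,T;D(L)')$-ball, and then to argue that such balls are compact in $Y_T$. The starting point is the energy-type bound \eqref{5.v} from Lemma \ref{lem.est}:
$$
\eps\,\E\|LR_\eps(v^\eps)\|_{L^\infty(0,T;L^2(\dom))}^2 \le C(u^0,T).
$$
Combining this with the inequality $\|L^*Lw\|_{D(L)'} \le \|Lw\|_{L^2(\dom)}$, which follows from the second estimate of \eqref{3.LL}, I obtain
$$
\E\bigl\|\sqrt{\eps}\,L^*LR_\eps(v^\eps)\bigr\|_{L^\infty(0,T;D(L)')}^2 \le C(u^0,T).
$$
Chebyshev's inequality then yields, for every $\delta>0$, a radius $K=K(\delta)>0$ such that
$$
\sup_{\eps>0}\,\Prob\Bigl(\bigl\|\sqrt{\eps}\,L^*LR_\eps(v^\eps)\bigr\|_{L^\infty(0,T;D(L)')} > K\Bigr) < \delta.
$$

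The next step is to verify that the closed ball
$$
\mathcal{K}_K := \bigl\{f\in L^\infty(0,T;D(L)') : \|f\|_{L^\infty(0,T;D(L)')}\le K\bigr\}
$$
is compact in the joint topology $\mathbb{T}_Y$. For the weak-$*$ component, since $D(L)=H^m_N(\dom)$ is separable, its dual has the Radon--Nikod\'ym property, and the identification $L^\infty(0,T;D(L)')=(L^1(0,T;D(L)))^*$ together with Banach--Alaoglu shows that $\mathcal{K}_K$ is weak-$*$ compact. Moreover, the weak-$*$ topology on $\mathcal{K}_K$ is metrizable because $L^1(0,T;D(L))$ is separable. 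For the $L^2_w$ component, the continuous embedding $L^\infty(0,T;D(L)')\hookrightarrow L^2(0,T;D(L)')$ shows that $\mathcal{K}_K$ is bounded in $L^2(0,T;D(L)')$ by $K\sqrt{T}$; since $L^2(0,T;D(L)')$ is a separable reflexive Hilbert space, bounded sets are relatively weakly sequentially compact and the weak topology on balls is metrizable.

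To conclude $\mathbb{T}_Y$-compactness of $\mathcal{K}_K$, it suffices to argue sequentially. Given a sequence $(f_n)\subset \mathcal{K}_K$, extract a weak-$*$ convergent subsequence with limit $f^*\in\mathcal{K}_K$, and then a further weakly convergent subsequence in $L^2(0,T;D(L)')$ with limit $\widetilde f$. Testing both limits against any $\psi\in L^2(0,T;D(L))\subset L^1(0,T;D(L))$ forces $f^*=\widetilde f$ a.e., so the same subsequence converges in both topologies to the same limit, establishing $\mathbb{T}_Y$-sequential compactness, and hence $\mathbb{T}_Y$-compactness by the metrizability observed above.

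Putting the pieces together, for every $\delta>0$ the $\mathbb{T}_Y$-compact set $\mathcal{K}_{K(\delta)}$ satisfies $\Prob(\sqrt{\eps}L^*LR_\eps(v^\eps)\notin \mathcal{K}_{K(\delta)}) < \delta$ uniformly in $\eps$, which is precisely the tightness of the laws in $Y_T$. The only nontrivial point is the joint compactness argument; the probabilistic content reduces to the entropy estimate \eqref{5.v} and a standard Chebyshev step. I do not foresee a serious obstacle, and I would execute the sequential-compactness verification as the main technical item.
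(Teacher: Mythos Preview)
Your proof is correct and follows essentially the same route as the paper: derive a uniform $L^\infty(0,T;D(L)')$ bound from \eqref{5.v} and \eqref{3.LL}, apply Chebyshev, and invoke weak/weak-$*$ compactness of balls. The paper is terser---it treats the two factor topologies separately and leaves the joint compactness implicit---whereas you spell out the sequential argument that the $L^\infty$-ball is compact for the maximum topology $\mathbb{T}_Y$; this extra care is appropriate and does no harm.

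One small correction: your justification ``since $D(L)=H^m_N(\dom)$ is separable, its dual has the Radon--Nikod\'ym property'' is not valid as stated (separability of a Banach space does not imply RNP for its dual; e.g.\ $L^1$ versus $L^\infty$). The correct reason is that $D(L)'$ is itself a separable Hilbert space, hence reflexive, and reflexive spaces have the RNP; this is what gives the duality $L^\infty(0,T;D(L)')=(L^1(0,T;D(L)))^*$ and, together with separability of $L^1(0,T;D(L))$, the metrizability of the weak-$*$ topology on bounded sets. With that fix your argument goes through.
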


\begin{proof}
We apply the Chebyshev inequality and use the inequality
$\|L^*LR_\eps(v^\eps)\|_{D(L)'}\le C\|LR_\eps(v^\eps)\|_{L^2(\dom)}$ and
estimate \eqref{5.v}:
$$
  \Prob\big(\sqrt{\eps}\|L^*LR_\eps(v^\eps)\|_{L^2(0,T;D(L)')}>K\big)
	\le \frac{\eps}{K^2}\E\|L^*LR_\eps(v^\eps)\|_{L^2(0,T;D(L)')}^2
	\le \frac{C}{K^2}
$$
for any $K>0$. Since closed balls in $L^2(0,T;D(L)')$ are weakly compact, 
the set of laws of $(\sqrt{\eps}L^*LR_\eps(v^\eps))$ is tight in $L_w^2(0,T;D(L)')$.
The second claim follows from an analogous argument.
\end{proof}


\subsection{Convergence of $(u^\eps)$}\label{sec.conv1}

Let $\mbox{P}(X)$ be the space of probability measures on $X$.
We consider the space $Z_T\times Y_T\times C^0([0,T];U_0)$, equipped with the
probability measure $\mu^\eps:=\mu_u^\eps\times\mu_w^\eps\times\mu_W^\eps$, where
\begin{align*}
  \mu_u^\eps(\cdot) &= \Prob(u^\eps\in\cdot)\in \mbox{P}(Z_T), \\
	\mu_w^\eps &= \Prob(\sqrt{\eps} L^*LR_\eps(v^\eps)\in\cdot)\in
	\mbox{P}(Y_T), \\
	\mu_W^\eps(\cdot) &= \Prob(W\in\cdot) \in \mbox{P}(C^0([0,T];U_0)),
\end{align*}
recalling the choice \eqref{5.sstar} of $s^*$.
The set of measures $(\mu^\eps)$ is tight, since the set of laws of
$(u^\eps)$ and $(\sqrt{\eps}L^*LR_\eps(v^\eps))$ are tight in 
$(Z_T,\mathbb{T})$ and $(Y_T,\mathbb{T}_Y)$, respectively.
Moreover, $(\mu_W^\eps)$ consists of one element only and is consequently weakly
compact in $C^0([0,T];U_0)$. By \blue{Prokhorov's} theorem, $(\mu_W^\eps)$ is tight. Hence,
$Z_T\times Y_T\times C^0([0,T];U_0)$ satisfies the assumptions of the
Skorokhod--Jakubowski theorem \cite[Theorem C.1]{BrOn10}. We infer that there
exists a subsequence of $(u^\eps,\sqrt{\eps}L^*LR_\eps(v^\eps))$, 
which is not relabeled, a probability
space $(\widetilde{\Omega},\widetilde{\mathcal F},\widetilde{\Prob})$ and, 
on this space, $(Z_T\times Y_T\times C^0([0,T];U_0))$-valued random variables
$(\widetilde{u},\widetilde{w},\widetilde{W})$ and $(\widetilde{u}^\eps,
\widetilde{w}^\eps,\widetilde{W}^\eps)$ such that $(\widetilde{u}^\eps,
\widetilde{w}^\eps,\widetilde{W}^\eps)$ has the same law as
$(u^\eps,\sqrt{\eps}L^*LR_\eps(v^\eps),W)$ on ${\mathcal B}(Z_T\times Y_T\times
C^0([0,T];U_0))$ and, as $\eps\to 0$,
$$
  (\widetilde{u}^\eps,\widetilde{w}^\eps,\widetilde{W}^\eps)\to 
	(\widetilde{u},\widetilde{w},\widetilde{W})\quad\mbox{in }
	Z_T\times Y_T\times C^0([0,T];U_0)\quad \widetilde{\Prob}\mbox{-a.s.}
$$
By the definition of $Z_T$ and $Y_T$, this convergence means 
$\widetilde{\Prob}$-a.s., 
\begin{align*}
  \widetilde{u}^\eps\to \widetilde{u} &\quad\mbox{strongly in }C^0([0,T];D(L)'), \\
	\widetilde{u}^\eps\rightharpoonup \widetilde{u} &\quad\mbox{weakly in }
	L^2(0,T;H^1(\dom)), \\
	\widetilde{u}^\eps\to\widetilde{u} &\quad\mbox{strongly in }
	L^2(0,T;L^{s^*}(\dom)), \\
	\widetilde{w}^\eps\rightharpoonup\widetilde{w} &\quad\mbox{weakly in }
	L^2(0,T;D(L)'), \\
	\widetilde{w}^\eps\rightharpoonup\widetilde{w} &\quad\mbox{weakly* in }
	L^\infty(0,T;D(L)'), \\
	\widetilde{W}^\eps\to\widetilde{W} &\quad\mbox{strongly in }C^0([0,T];U_0).
\end{align*}

We derive some regularity properties for the limit $\widetilde{u}$. We note that
$\widetilde{u}$ is a $Z_T$-Borel random variable, since
${\mathcal B}(Z_T\times Y_T\times C^0([0,T];U_0))$ is a subset of
${\mathcal B}(Z_T)\times{\mathcal B}(Y_T)\times{\mathcal B}(C^0([0,T];U_0))$.
We deduce from estimates \eqref{5.L1} and \eqref{5.H1} and the fact that
$u^\eps$ and $\widetilde{u}^\eps$ have the same law that
$$
  \sup_{\eps>0}\widetilde{\E}\|\widetilde{u}^\eps\|_{L^2(0,T;H^1(\dom))}^p
	+ \sup_{\eps>0}\widetilde{\E}\|\widetilde{u}^\eps\|_{L^\infty(0,T;D(L)')}^p 
	< \infty.
$$
We infer the existence of a further subsequence of $(\widetilde{u}^\eps)$
(not relabeled) that is weakly converging in 
$L^p(\widetilde{\Omega};L^2(0,T;H^1(\dom)))$ and weakly* converging in
$L^p(\widetilde{\Omega};C^0([0,T];D(L)'))$ as $\eps\to 0$. 
Because $\widetilde{u}^\eps\to\widetilde{u}$ in $Z_T$ $\widetilde{\Prob}$-a.s.,
we conclude that the limit function satisfies
$$
  \widetilde{\E}\|\widetilde{u}\|_{L^2(0,T;H^1(\dom))}^p
	+ \widetilde{\E}\|\widetilde{u}\|_{L^\infty(0,T;D(L)')}^p < \infty.
$$
Let $\widetilde{\F}$ and $\widetilde{\F}^\eps$ be the filtrations generated by
$(\widetilde{u},\widetilde{w},\widetilde{W})$ and 
$(\widetilde{u}^\eps,\widetilde{w}^\eps,\widetilde{W})$, respectively.
By following the arguments of the proof of \cite[Proposition B4]{BrOn13},
we can verify that these new random variables induce actually stochastic processes.
The progressive measurability of $\widetilde{u}^\eps$ is a consequence of
\cite[Appendix B]{BHM13}. Set 
$\widetilde{W}_j^{\eps,k}(t):=(\widetilde{W}^\eps(t),e_k)_U$.
We claim that $\widetilde{W}_j^{\eps,k}(t)$ for $k\in\N$
are independent, standard $\widetilde{\mathcal F}_t$-Wiener processes.
The adaptedness is a direct consequence of the definition; the independence 
of $\widetilde{W}_j^{\eps,k}(t)$ and the independence of the increments
$\widetilde{W}^{\eps,k}(t)-\widetilde{W}^{\eps,k}(s)$ with respect to
$\widetilde{\mathcal F}_s$ are inherited from $(W(t),e_k)_U$. Passing to the
limit $\eps\to 0$ in the characteristic function, by using dominated convergence,
we find that $\widetilde{W}(t)$ are $\widetilde{\mathcal F}_t$-martingales
with the correct marginal distributions. We deduce from L\'evy's characterization
theorem that $\widetilde{W}(t)$ is indeed a cylindrical Wiener process.

By definition, $u_i^\eps=u_i(R_\eps(v^\eps))=\exp(R_\eps(v^\eps))$ is positive in 
$Q_T$ a.s. We claim that also $\widetilde{u}_i$ is nonnegative in $\dom$ a.s.

\begin{lemma}[Nonnegativity]
It holds that $\widetilde{u}_i\ge 0$ a.e.\ in $Q_T$
$\widetilde{\Prob}$-a.s. for all $i=1,\ldots,n$.
\end{lemma}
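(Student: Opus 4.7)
The plan is to transfer nonnegativity from $u^\eps$ to $\widetilde{u}^\eps$ via equality of laws, and then pass to the limit $\eps\to 0$ using the almost sure strong convergence in $L^2(0,T;L^{s^*}(\dom))$ together with the fact that the nonnegative cone is closed.

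First I would observe that, since $u_i^\eps = \exp(R_\eps(v^\eps)/\pi_i)$ by construction, we have $u_i^\eps > 0$ a.e.\ in $Q_T$, $\Prob$-a.s. Consider the Borel set
\begin{equation*}
  K := \{u\in Z_T: u_i(x,t)\ge 0\text{ for a.e.\ }(x,t)\in Q_T,\ i=1,\ldots,n\}.
\end{equation*}
This set is measurable with respect to $\mathcal{B}(Z_T)$ because nonnegativity a.e.\ is preserved under $L^2(0,T;L^{s^*}(\dom))$-convergence and $Z_T$ embeds continuously into this space; equivalently, $K$ is the intersection of the countably many closed half-spaces $\{\int_{Q_T} u_i\,\varphi\,\dd x\,\dd t \ge 0\}$ indexed by nonnegative $\varphi$ from a countable dense subset of $L^2(Q_T)$. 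Since $u^\eps\in K$ $\Prob$-a.s.\ and $\widetilde{u}^\eps$ has the same law as $u^\eps$ on $\mathcal{B}(Z_T)$, it follows that $\widetilde{u}^\eps\in K$ $\widetilde{\Prob}$-a.s., i.e.\ $\widetilde{u}_i^\eps\ge 0$ a.e.\ in $Q_T$ $\widetilde{\Prob}$-a.s.

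Next I would pass to the limit $\eps\to 0$. From Section \ref{sec.conv1} we know that $\widetilde{u}^\eps\to\widetilde{u}$ strongly in $L^2(0,T;L^{s^*}(\dom))$, $\widetilde{\Prob}$-a.s. Extracting a (random) subsequence, we may assume pointwise a.e.\ convergence $\widetilde{u}^\eps_i(x,t)\to\widetilde{u}_i(x,t)$ in $Q_T$. Since each $\widetilde{u}^\eps_i$ is nonnegative a.e., the pointwise limit $\widetilde{u}_i$ is nonnegative a.e.\ in $Q_T$, $\widetilde{\Prob}$-a.s.

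I do not anticipate serious obstacles here: the only subtlety is ensuring that $K$ is a genuine Borel subset of $Z_T$ so that the law-transfer step is legitimate, but this follows by writing $K$ as a countable intersection of closed (hence Borel) sets as described above. The argument uses only the explicit representation $u_i^\eps=\exp(R_\eps(v^\eps)/\pi_i)$, the equality of laws on $\mathcal{B}(Z_T)$, and the closedness of the nonnegative cone under a.e.\ convergence.
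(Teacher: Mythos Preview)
Your proposal is correct and follows essentially the same approach as the paper: transfer nonnegativity from $u^\eps$ to $\widetilde{u}^\eps$ via equality of laws on $\mathcal{B}(Z_T)$, then pass to the limit using a.e.\ convergence along a subsequence extracted from the strong $L^2(Q_T)$ convergence. The only cosmetic difference is that the paper phrases the law-transfer step through the identity $\widetilde{\E}\|(\widetilde{u}_i^\eps)^-\|_{L^2(Q_T)} = \E\|(u_i^\eps)^-\|_{L^2(Q_T)} = 0$, whereas you argue via Borel measurability of the nonnegative cone; both are equivalent and standard.
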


\begin{proof}
Let $i\in\{1,\ldots,n\}$. Since $u_i^\eps > 0$ in $Q_T$ a.s., we have
$\E\|(u_i^\eps)^-\|_{L^2(0,T;L^2(\dom))}=0$, where $z^-=\min\{0,z\}$.
The function $u_i^\eps$ is $Z_T$-Borel measurable and so does its negative part.
Therefore, using the equivalence of the laws of $u_i^\eps$ and $\widetilde{u}_i^\eps$
in $Z_T$ and writing $\mu_i^\eps$ and $\widetilde{\mu}_i^\eps$ for the laws of
$u_i^\eps$ and $\widetilde{u}_i^\eps$, respectively, we obtain
\begin{align*}
  \widetilde{\E}\|(\widetilde{u}_i^\eps)^-\|_{L^2(Q_T)}
	&= \int_{L^2(Q_T)}\|y^-\|_{L^2(Q_T)}\dd\widetilde{\mu}_i^\eps(y)\\
	&= \int_{L^2(Q_T)}\|y^-\|_{L^2Q_T)}\dd\mu_i^\eps(y)
	= \E\|u_i^\eps\|_{L^2(Q_T)} = 0.
\end{align*}
This shows that $\widetilde{u}_i^\eps\ge 0$ a.e.\ in $Q_T$ $\widetilde{\Prob}$-a.s.
The convergence (up to a subsequence) $\widetilde{u}^\eps\to \widetilde{u}$
a.e.\ in $Q_T$ $\widetilde{\Prob}$-a.s.\ then implies that $\widetilde{u}_i\ge 0$
in $Q_T$ $\widetilde{\Prob}$-a.s.
\end{proof}

The following lemma is needed to verify that $(\widetilde{u},\widetilde{W})$ 
is a martingale solution to \eqref{1.eq}--\eqref{1.bic}.

\begin{lemma}
It holds for all $t\in[0,T]$, $i=1,\ldots,n$, and all
$\phi_1\in L^2(\dom)$ and all $\phi_2\in D(L)$ that
\begin{align}
  \lim_{\eps\to 0}\widetilde\E\int_0^T\big(\widetilde u_{i}^\eps(t)
	-\widetilde u_{i}(t),	\phi_1\big)_{L^2(\dom)}\dd t &= 0, \label{5.E1} \\
	\lim_{\eps\to 0}\widetilde\E\big\langle\widetilde u_{i}^\eps(0)
	-\widetilde u_{i}(0),\phi_2\big\rangle_{D(L)',D(L)} &= 0, \label{5.E2} \\
	\lim_{\eps\to 0}\widetilde\E\int_0^T\big\langle\sqrt{\eps}\widetilde{w}_i^\eps(t),
	\phi_2\big\rangle_{D(L)',D(L)}\dd t &= 0, \label{5.E3} \\
	\lim_{\eps\to 0}\widetilde\E\langle\sqrt{\eps}\widetilde{w}_i^\eps(0),
	\phi_2\rangle_{D(L)',D(L)} &= 0, \label{5.E3a} \\
	\lim_{\eps\to 0}\widetilde\E\int_0^T\bigg|\sum_{j=1}^n\int_0^t\int_\dom\big(
	A_{ij}(\widetilde u^\eps(s))\na\widetilde u_j^\eps(s)
	- A_{ij}(\widetilde u(s))\na\widetilde u_j(s)\big)\cdot
	\na\phi_2	\dd x\dd s\bigg|\dd t &= 0, \label{5.E4} \\
	\lim_{\eps\to 0}\widetilde\E\int_0^T\bigg|\sum_{j=1}^n\int_0^t
	\Big(\sigma_{ij}(\widetilde u^\eps(s))\dd\widetilde W_j^\eps(s)
	-\sigma_{ij}(\widetilde u(s))\dd\widetilde W_j(s),\phi_1\Big)_{L^2(\dom)}\bigg|^2 
	\dd t &= 0. \label{5.E5}
\end{align}
\end{lemma}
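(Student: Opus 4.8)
The six limits all follow the same two-step scheme. First, the convergence is established $\widetilde\Prob$-almost surely from the a.s.\ convergences of $(\widetilde u^\eps,\widetilde w^\eps,\widetilde W^\eps)$ supplied by the Skorokhod--Jakubowski theorem; then it is upgraded to convergence of the expectations by Vitali's convergence theorem, the equi-integrability coming from the uniform $L^p(\widetilde\Omega)$-moment bounds of Lemmas \ref{lem.est}, \ref{lem.hom}, and \ref{lem.hom2} (valid for $\widetilde u^\eps$ and $\widetilde w^\eps$ since these share the laws of $u^\eps$ and $\sqrt\eps L^*LR_\eps(v^\eps)$). I would dispatch the four elementary limits first. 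For \eqref{5.E1}, since $s^*\ge2$ may be assumed in \eqref{5.sstar} (note $2d/(d-2)>2$ for $d\ge3$), the strong convergence $\widetilde u^\eps\to\widetilde u$ in $L^2(0,T;L^{s^*}(\dom))$ implies strong convergence in $L^2(Q_T)$, whence $\int_0^T(\widetilde u_i^\eps-\widetilde u_i,\phi_1)_{L^2(\dom)}\,\dd t\to0$ a.s., this quantity being bounded in $L^p(\widetilde\Omega)$ for some $p>1$ by \eqref{5.24d}. For \eqref{5.E2}, I evaluate $\widetilde u_i^\eps\to\widetilde u_i$ in $C^0([0,T];D(L)')$ at $t=0$; $\widetilde u_i^\eps(0)$ is bounded in $L^p(\widetilde\Omega;D(L)')$ by \eqref{5.C0}. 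For \eqref{5.E3}--\eqref{5.E3a}, I pair $\sqrt\eps\,\widetilde w_i^\eps$ with $\phi_2$, use $\|L^*Lw\|_{D(L)'}\le\|w\|_{D(L)}=\|Lw\|_{L^2(\dom)}$ from \eqref{3.L}--\eqref{3.LL}, and invoke
\begin{align*}
  \widetilde\E\|\widetilde w^\eps\|_{L^\infty(0,T;D(L)')}^2
  &= \E\|\sqrt\eps\,L^*LR_\eps(v^\eps)\|_{L^\infty(0,T;D(L)')}^2 \\
  &\le \E\big(\eps\|LR_\eps(v^\eps)\|_{L^\infty(0,T;L^2(\dom))}^2\big)\le C
\end{align*}
from \eqref{5.v}, together with $\E\big(\eps\|LR_\eps(u^0)\|_{L^2(\dom)}^2\big)\le2\,\E\int_\dom h(u^0)\,\dd x<\infty$ from Lemma \ref{lem.v0} for the value at $t=0$; the additional factor $\sqrt\eps$ then makes both quantities of order $\sqrt\eps$.

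For the drift limit \eqref{5.E4}, I write $A_{ij}(u)=a_{i0}\delta_{ij}+\ell_{ij}(u)$ with $\ell_{ij}$ linear, and note $\na\phi_2\in L^\infty(\dom)$ since $D(L)\hookrightarrow W^{1,\infty}(\dom)$. The constant part passes to the limit by testing the weak convergence $\na\widetilde u_j^\eps\rightharpoonup\na\widetilde u_j$ in $L^2(Q_T)$ against $\mathbf{1}_{(0,t)}\na\phi_2$. For the linear part I use
\begin{align*}
  \ell_{ij}(\widetilde u^\eps)\na\widetilde u_j^\eps-\ell_{ij}(\widetilde u)\na\widetilde u_j
  &= \big(\ell_{ij}(\widetilde u^\eps)-\ell_{ij}(\widetilde u)\big)\na\widetilde u_j^\eps
  +\ell_{ij}(\widetilde u)\big(\na\widetilde u_j^\eps-\na\widetilde u_j\big),
\end{align*}
where the first summand tends to $0$ in $L^1(Q_T)$ a.s.\ because $\|\ell_{ij}(\widetilde u^\eps)-\ell_{ij}(\widetilde u)\|_{L^2(Q_T)}\le C\|\widetilde u^\eps-\widetilde u\|_{L^2(Q_T)}\to0$ while $\|\na\widetilde u_j^\eps\|_{L^2(Q_T)}$ stays bounded a.s.\ (weakly convergent sequences are bounded), and the second tends to $0$ a.s.\ because $\mathbf{1}_{(0,t)}\ell_{ij}(\widetilde u)\na\phi_2\in L^2(Q_T)$ a.s.\ and $\na\widetilde u_j^\eps\rightharpoonup\na\widetilde u_j$. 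This yields the a.s.\ limit for each fixed $t$; dominated convergence in $t$ (the inner integral is bounded, uniformly in $t$ and $\eps$ a.s., by $\|A_{ij}(\widetilde u^\eps)\na\widetilde u_j^\eps-A_{ij}(\widetilde u)\na\widetilde u_j\|_{L^1(Q_T)}\,\|\na\phi_2\|_{L^\infty(\dom)}$), followed by equi-integrability in $\omega$ (via $\|A_{ij}(\widetilde u^\eps)\na\widetilde u_j^\eps\|_{L^1(Q_T)}\le C(1+\|\widetilde u^\eps\|_{L^2(Q_T)})\|\widetilde u^\eps\|_{L^2(0,T;H^1(\dom))}$ together with \eqref{5.H1p} and \eqref{5.24d}), then completes this term.

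The stochastic limit \eqref{5.E5} is the main obstacle, essentially because the driving noise $\widetilde W^\eps$ itself depends on $\eps$, so one cannot naively subtract a difference of integrands: the intermediate integral $\int_0^\cdot\sigma_{ij}(\widetilde u)\,\dd\widetilde W_j^\eps$ need not even be defined, since $\widetilde u$ need not be $\widetilde\F^\eps$-adapted. The plan is to invoke the by now classical convergence lemma for stochastic integrals under a varying driving Wiener process (continuity of the It\^o map under joint convergence of integrand and noise), whose hypotheses hold here: $\widetilde u^\eps$ is $\widetilde\F^\eps$-progressively measurable and $\widetilde W^\eps$ is an $\widetilde\F^\eps$-cylindrical Wiener process (cf.\ the discussion preceding this lemma); $\widetilde W^\eps\to\widetilde W$ in $C^0([0,T];U_0)$ $\widetilde\Prob$-a.s.; and $\sigma(\widetilde u^\eps)\to\sigma(\widetilde u)$ in $L^2(0,T;\L_2(U;L^2(\dom)))$ $\widetilde\Prob$-a.s., by the Lipschitz bound (A4) and the a.s.\ $L^2(Q_T)$-convergence of $\widetilde u^\eps$. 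The conclusion is $\int_0^\cdot\sigma_{ij}(\widetilde u^\eps)\,\dd\widetilde W_j^\eps\to\int_0^\cdot\sigma_{ij}(\widetilde u)\,\dd\widetilde W_j$ in probability in $C^0([0,T];L^2(\dom))$; pairing with $\phi_1$ and using that the Burkholder--Davis--Gundy inequality, the linear growth of $\sigma$ from (A4), and \eqref{5.24d} provide the requisite equi-integrability, one upgrades this to convergence in $L^2(\widetilde\Omega\times(0,T))$, which is \eqref{5.E5}. The proof of the underlying lemma itself combines a Burkholder--Davis--Gundy estimate of the integrand difference — reducing, via (A4), to $\widetilde\E\|\widetilde u^\eps-\widetilde u\|_{L^2(Q_T)}^2\to0$, which holds because $\widetilde u^\eps\to\widetilde u$ in $L^2(Q_T)$ a.s.\ and $\|\widetilde u^\eps-\widetilde u\|_{L^2(Q_T)}^2$ is bounded in $L^{p/2}(\widetilde\Omega)$ with $p/2>1$ by \eqref{5.24d} and Fatou's lemma — with a truncation of the series over $k$ (the Hilbert--Schmidt tail being equi-small thanks to the a.s.\ strong convergence in $\L_2(U;L^2(\dom))$, which makes $\{\sigma(\widetilde u^\eps(s,\omega))\}_\eps$ relatively compact there) and the pathwise continuity of the It\^o map for the finitely many remaining one-dimensional Brownian integrals, using $\widetilde W_j^{\eps,k}\to\widetilde W_j^k$ in $C^0([0,T])$. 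Carrying out this last step, and verifying that the filtrations are compatible so that all the stochastic integrals are well defined, is where the real work lies.
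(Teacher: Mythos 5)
Your proposal is correct and follows essentially the same route as the paper: almost-sure convergence supplied by the Skorokhod--Jakubowski theorem, upgraded to convergence of expectations by Vitali's theorem using the moment bounds of Lemmas \ref{lem.hom} and \ref{lem.hom2}, with the splitting of the drift term in \eqref{5.E4} into a Lipschitz-difference piece and a weak-convergence piece, and with the convergence of stochastic integrals under a jointly varying driver for \eqref{5.E5}. The only presentational differences are minor: for \eqref{5.E1}--\eqref{5.E2} the paper simply cites the proof of Lemma~16 in \cite{DJZ19} whereas you supply the short direct arguments (these are sound); for \eqref{5.E3a} the paper invokes weak* convergence of $\widetilde w_i^\eps$ in $L^\infty(0,T;D(L)')$ while you argue via the explicit $\sqrt\eps$ damping and Lemma~\ref{lem.v0}, which is arguably the cleaner route since a pointwise-in-time evaluation at $t=0$ is not directly controlled by weak* convergence in $L^\infty$; and for \eqref{5.E5} the paper cites \cite[Lemma~2.1]{DGT11} for the continuity of the It\^o map under joint convergence of integrand and noise, which is exactly the ``classical convergence lemma'' you describe and sketch. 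Your observation that one cannot simply subtract integrands because $\widetilde u$ need not be $\widetilde\F^\eps$-adapted is precisely what makes the DGT11-type lemma necessary, and your equi-integrability verifications via \eqref{5.H1p}, \eqref{5.24d}, and Fatou are all in order.
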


\begin{proof}
The proof is a combination of the uniform bounds and Vitali's 
convergence theorem. Convergences \eqref{5.E1} and \eqref{5.E2} have been shown
in the proof of \cite[Lemma 16]{DJZ19}, and \eqref{5.E3} is a direct consequence of
\eqref{5.LLR} and
$$
  \widetilde{\E}\bigg(\int_0^T\langle\sqrt{\eps}\widetilde{w}_i^\eps(t),\phi_2
	\rangle_{D(L)',D(L)}\dd t\bigg)^p 
	\le \eps^{p/2}\widetilde{\E}\bigg(\int_0^T\|\widetilde{w}_i^\eps(t)\|_{D(L)'}
	\|\phi_2\|_{D(L)}\dd t\bigg)^p \le \eps^{p/2}C.
$$
Convergence \eqref{5.E3a} follows from $\widetilde{w}_i^\eps\rightharpoonup
\widetilde{w}_i$ weakly* in $L^\infty(0,T;D(L)')$.
We establish \eqref{5.E4}:
\begin{align*}
  \bigg|\int_0^T&\bigg|\sum_{j=1}^n\int_0^t\int_\dom\big(
	A_{ij}(\widetilde u^\eps(s))\na\widetilde u_j^\eps(s)
	- A_{ij}(\widetilde u(s))\na\widetilde u_j(s)\big)\cdot\na\phi_2\dd x\dd s\bigg| \\
	&\le \int_0^T\|A_{ij}(\widetilde u^\eps(s))-A_{ij}(\widetilde u(s))\|_{L^2(\dom)}
	\|\na \widetilde u_j^\eps(s)\|_{L^2(\dom)}\|\na\phi_2\|_{L^\infty(\dom)}\dd s \\
	&\phantom{xx}{}+ \bigg|\int_0^T\int_\dom A_{ij}(\widetilde u(s))
	\na(\widetilde u^\eps(s) - \widetilde u(s))\cdot\na\phi_2 \dd x\dd s\bigg| 
	=: I_1^\eps+I_2^\eps.
\end{align*}
By the Lipschitz continuity of $A$ and the uniform bound for $\na\widetilde u^\eps$, 
we have $I_1^\eps\to 0$ as $\eps\to 0$ $\widetilde{\Prob}$-a.s. 
At this point, we use the embedding $D(L)\hookrightarrow W^{1,\infty}(\dom)$.
Also the second
integral $I_2^\eps$ converges to zero, since $A_{ij}(\widetilde u)\na\phi_2
\in L^2(0,T;L^2(\dom))$ and $\na\widetilde u_j^\eps\rightharpoonup\na\widetilde u_j$
weakly in $L^2(0,T;L^2(\dom))$. This shows that $\widetilde \Prob$-a.s., 
$$
  \lim_{\eps\to 0}\int_0^T\int_\dom A_{ij}(\widetilde u^\eps(s))
	\na\widetilde u_j^\eps(s)\cdot\na\phi_2\dd x\dd s 
	= \int_0^T\int_\dom A_{ij}(\widetilde u(s))\na\widetilde u_j(s)
	\cdot\na\phi_2\dd x\dd s.
$$
A straightforward estimation and bound \eqref{5.H1p} lead to
\begin{align*}
  \widetilde \E&\bigg|\int_0^T\int_\dom A_{ij}(\widetilde u^\eps(s))
	\na\widetilde u_j^\eps(s)\cdot\na\phi_2\dd x\dd s\bigg|^p \\
	&\le \|\na\phi_2\|_{L^\infty(\dom)}^p\widetilde \E\bigg(\int_0^T
	\bigg\|\sum_{j=1}^n A_{ij}(\widetilde u^\eps(s))\na\widetilde u_j^\eps(s)
	\bigg\|_{L^1(\dom)}\dd s\bigg)^p \le C,
\end{align*}
Hence, Vitali's convergence theorem gives \eqref{5.E4}. 

It remains to prove \eqref{5.E5}. 
By Assumption (A4), $\widetilde \Prob$-a.s.,
$$
  \int_0^T\|\sigma_{ij}(\widetilde u^\eps(s))-\sigma_{ij}(\widetilde u(s))
	\|_{{\mathcal L}_2(U;L^2(\dom))}^2\dd s
	\le C_\sigma\|\widetilde u^\eps-\widetilde u\|_{L^2(0,T;L^2(\dom))}\to 0.
$$
This convergence and $\widetilde W^\eps\to\widetilde W$ in
$C^0([0,T];U_0)$ imply that \cite[Lemma 2.1]{DGT11}
$$
  \int_0^T\sigma_{ij}(\widetilde u^\eps)\dd\widetilde W^\eps
	\to\int_0^T\sigma_{ij}(\widetilde u)\dd\widetilde W\quad\mbox{in }
	L^2(0,T;L^2(\dom))\ \widetilde \Prob\mbox{-a.s.}
$$
By Assumption (A4) again,
\begin{align*}
  \widetilde \E&\bigg(\int_0^T\|\sigma_{ij}(\widetilde u^\eps(s))-\sigma_{ij}
	(\widetilde u(s))\|^2_{{\mathcal L}_2(U;L^2(\dom))}\dd s\bigg)^p \\
	&\le C + C\widetilde \E\bigg(\int_0^T\big(\|\widetilde u^\eps(s)\|_{L^2(\dom)}^2
	+ \|\widetilde u(s)\|_{L^2(\dom)}^2\big)\dd s\bigg)^p \le C.
\end{align*}
We infer from Vitali's convergence theorem that
$$
  \lim_{\eps\to 0}\widetilde \E\int_0^T\|\sigma_{ij}(\widetilde u^\eps(s))
	-\sigma_{ij}(\widetilde u(s))\|_{{\mathcal L}_2(U;L^2(\dom))}^2\dd s = 0.
$$
The estimate
\begin{align*}
  \widetilde \E&\bigg|\bigg(\int_0^T\sigma_{ij}(\widetilde u^\eps(s))
	\dd \widetilde W_j^\eps(s) - \int_0^T\sigma_{ij}(\widetilde u(s))
	\dd\widetilde W_j(s),\phi_1\bigg)_{L^2(\dom)}\bigg|^2 \\
	&\le C\|\phi_1\|_{L^2(\dom)}^2\widetilde \E\int_0^T
	\big(\|\sigma_{ij}(\widetilde u^\eps(s))\|_{{\mathcal L}_2(U;L^2(\dom))}^2
	+ \|\sigma_{ij}(\widetilde u(s))\|_{{\mathcal L}_2(U;L^2(\dom))}^2\big)\dd s \\
	&\le C\|\phi_1\|_{L^2(\dom)}^2\bigg\{1 + \widetilde \E\bigg(\int_0^T\big(
	\|\widetilde u^\eps(s)\|_{L^2(\dom)}^2 + \|\widetilde u(s)\|_{L^2(\dom)}^2\big)
	\dd s\bigg)\bigg\} \le C
\end{align*}
for all $\phi_1\in L^2(\dom)$ and the dominated convergence theorem yield
\eqref{5.E5}.
\end{proof}

To show that the limit is indeed a solution, we define, for $t\in[0,T]$,
$i=1,\ldots,n$, and $\phi\in D(L)$,
\begin{align*}
  \Lambda_i^\eps(\widetilde u^\eps,\widetilde w^\eps,\widetilde W^\eps,\phi)(t)
	&:= \langle\widetilde u_i(0),\phi\rangle 
	+ \sqrt{\eps}\langle\widetilde w^\eps(0),\phi\rangle \\
	&\phantom{xx}{}- \sum_{j=1}^n\int_0^t\int_\dom A_{ij}(\widetilde u^\eps(s))
	\na\widetilde u_j^\eps(s)\cdot\na\phi\dd x\dd s \\
	&\phantom{xx}{}+ \sum_{j=1}^n\bigg(\int_0^t\sigma_{ij}(\widetilde u^\eps(s))\dd
	\widetilde W^\eps_j(s),\phi\bigg)_{L^2(\dom)}, \\
	\Lambda_i(\widetilde u,\widetilde w,\widetilde W,\phi)(t) 
	&:= \langle\widetilde u_i(0),\phi\rangle
	- \sum_{j=1}^n\int_0^t\int_\dom\langle A_{ij}(\widetilde u(s))\na\widetilde u_j(s)
	\cdot\na\phi\dd x\dd s \\
	&\phantom{xx}{}+ \sum_{j=1}^n\bigg(\int_0^t\sigma_{ij}(\widetilde u(s))\dd
	\widetilde W_j(s),\phi\bigg)_{L^2(\dom)}.
\end{align*}
The following corollary is a consequence of the previous lemma.

\begin{corollary}\label{coro.skt}
It holds for any $\phi_1\in L^2(\dom)$ and $\phi_2\in D(L)$ that
\begin{align*}
  \lim_{\eps\to 0}\big\|(\widetilde u_i^\eps,\phi_1)_{L^2(\dom)}
	- (\widetilde u_i,\phi_1)_{L^2(\dom)}\big\|_{L^1(\widetilde\Omega\times(0,T))} 
	&= 0, \\
	\lim_{\eps\to 0}\|\Lambda_i^\eps(\widetilde u^\eps,\sqrt{\eps}\widetilde w^\eps,
	\widetilde W^\eps,\phi_2) - \Lambda_i(\widetilde u,0,\widetilde W,\phi_2)
	\|_{L^1(\widetilde \Omega\times(0,T))} &= 0.
\end{align*}
\end{corollary}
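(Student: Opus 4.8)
The plan is to obtain the corollary directly from the convergences \eqref{5.E1}--\eqref{5.E5} of the preceding lemma, the only real work being to expand the functionals $\Lambda_i^\eps$ and $\Lambda_i$ into their constituent terms and, for each term, to upgrade the available mode of convergence to convergence in $L^1(\widetilde\Omega\times(0,T))$ by invoking Vitali's convergence theorem together with the uniform higher-order moment bounds of Lemmas \ref{lem.hom} and \ref{lem.hom2}.

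I would first treat the statement about $(\widetilde u_i^\eps,\phi_1)_{L^2(\dom)}$. Since $\widetilde u^\eps\to\widetilde u$ strongly in $L^2(0,T;L^{s^*}(\dom))$ $\widetilde\Prob$-a.s.\ and $s^*\ge 2$, for a fixed $\phi_1\in L^2(\dom)$ the real-valued maps $t\mapsto(\widetilde u_i^\eps(t),\phi_1)_{L^2(\dom)}$ converge to $t\mapsto(\widetilde u_i(t),\phi_1)_{L^2(\dom)}$ in $L^1(0,T)$, $\widetilde\Prob$-a.s. Because $\widetilde u^\eps$ has the law of $u^\eps$, estimate \eqref{5.22d} with some $p>2$ bounds $\widetilde{\E}\big(\|(\widetilde u_i^\eps,\phi_1)_{L^2(\dom)}\|_{L^1(0,T)}\big)^p$ uniformly in $\eps$, which gives uniform integrability on $\widetilde\Omega\times(0,T)$; Vitali's theorem then yields the claimed $L^1(\widetilde\Omega\times(0,T))$ convergence. (This is exactly \eqref{5.E1}.)

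Next I would split the difference $\Lambda_i^\eps(\widetilde u^\eps,\sqrt\eps\widetilde w^\eps,\widetilde W^\eps,\phi_2)-\Lambda_i(\widetilde u,0,\widetilde W,\phi_2)$ into its four contributions and estimate each in $L^1(\widetilde\Omega\times(0,T))$. The initial-datum terms match (they coincide, or converge by \eqref{5.E2}). The term $\sqrt\eps\langle\widetilde w^\eps(0),\phi_2\rangle$, whose modulus does not depend on $t$, vanishes by \eqref{5.E3a} (and \eqref{5.E3}) together with the bound in \eqref{5.LLR}. The cross-diffusion contribution $\sum_{j=1}^n\int_0^t\int_\dom\big(A_{ij}(\widetilde u^\eps)\na\widetilde u_j^\eps-A_{ij}(\widetilde u)\na\widetilde u_j\big)\cdot\na\phi_2\,\dd x\,\dd s$ tends to zero in $L^1(\widetilde\Omega\times(0,T))$ by \eqref{5.E4}. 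Finally the stochastic term $\sum_{j=1}^n\big(\int_0^t(\sigma_{ij}(\widetilde u^\eps)\,\dd\widetilde W_j^\eps-\sigma_{ij}(\widetilde u)\,\dd\widetilde W_j),\phi_2\big)_{L^2(\dom)}$ converges to zero in $L^2(\widetilde\Omega\times(0,T))$ by \eqref{5.E5}, hence a fortiori in $L^1(\widetilde\Omega\times(0,T))$ by the Cauchy--Schwarz inequality. Adding the four estimates completes the proof.

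The argument involves no genuine difficulty; the only point requiring attention is the systematic passage from a.s.\ or weak convergence to convergence in $L^1(\widetilde\Omega\times(0,T))$, which throughout rests on the $p$-th moment estimates (with $p=(2d+4)/d>2$) of Lemmas \ref{lem.hom} and \ref{lem.hom2} and on Vitali's theorem, precisely as in the proof of the preceding lemma.
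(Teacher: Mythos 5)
Your proposal is correct and takes essentially the same route the paper intends: the paper simply declares the corollary ``a consequence of the previous lemma,'' and your term-by-term expansion of $\Lambda_i^\eps-\Lambda_i$, with each piece handled by \eqref{5.E1}--\eqref{5.E5} (upgrading to $L^1(\widetilde\Omega\times(0,T))$ via a.s.\ convergence, the higher-moment bounds, Vitali, and Cauchy--Schwarz for the stochastic term), is exactly the argument this statement presupposes. The only cosmetic remark is that for the uniform integrability of $(\widetilde u_i^\eps,\phi_1)_{L^2(\dom)}$ it is \eqref{5.24d} (the $L^{2+4/d}(0,T;L^2(\dom))$ bound) rather than \eqref{5.22d} that bounds $|(\widetilde u_i^\eps,\phi_1)_{L^2}|\le\|\widetilde u_i^\eps\|_{L^2}\|\phi_1\|_{L^2}$ in $L^{2+4/d}(\widetilde\Omega\times(0,T))$ most directly, and that \eqref{5.E1}, \eqref{5.E3}, \eqref{5.E3a} are stated in the lemma without absolute values, so for the $L^1$-norm convergence you are in fact re-running their (straightforward) proofs with the modulus inserted rather than quoting them verbatim --- which is what you do.
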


Since $v^\eps$ is a strong solution to \eqref{4.approx}, it satisfies
for a.e.\ $t\in[0,T]$ $\Prob$-a.s., $i=1,\ldots,n$, and $\phi\in D(L)$,
$$
  (v_i^\eps(t),\phi)_{L^2(\dom)}
	= \Lambda_i^\eps(u^\eps,\eps L^*LR_\eps(v^\eps),W,\phi)(t)
$$
and in particular,
$$
  \int_0^T\E\big|(v_i^\eps(t),\phi)_{L^2(\dom)} 
	- \Lambda_i^\eps(u^\eps,\eps L^*LR_\eps(v^\eps),W,\phi)(t)\big|\dd t = 0.
$$
We deduce from the equivalence of the laws of $(u^\eps,\eps L^*LR_\eps(v^\eps),W)$
and $(\widetilde u^\eps,\sqrt{\eps}\widetilde w^\eps,\widetilde W)$ that
$$
  \int_0^T\widetilde \E\big|\big(\widetilde u^\eps_i(t)
	+\sqrt{\eps}\widetilde w_i^\eps,\phi\big)_{L^2(\dom)} 
	- \Lambda_i^\eps\big(\widetilde u^\eps,\sqrt{\eps}\widetilde w^\eps,
	\widetilde W^\eps,\phi\big)(t)\big|\dd t = 0.
$$
By Corollary \ref{coro.skt}, we can pass to the limit $\eps\to 0$ to obtain
$$
  \int_0^T\widetilde \E\big|(\widetilde u_i(t),\phi)_{L^2(\dom)} 
	- \Lambda_i(\widetilde u,0,\widetilde W,\phi)(t)\big|\dd t = 0.
$$
This identity holds for all $i=1,\ldots,n$ and all $\phi\in D(L)$. This shows that 
$$
  \big|(\widetilde u_i(t),\phi)_{L^2(\dom)} 
	- \Lambda_i(\widetilde u,0,\widetilde W,\phi)(t)\big| = 0
	\quad\mbox{for a.e. }t\in[0,T]\ \widetilde\Prob\mbox{-a.s.},\ i=1,\ldots,n.
$$
We infer from the definition of $\Lambda_i$ that
\begin{align*}
  (\widetilde u_i(t),\phi)_{L^2(\dom)} &= (\widetilde u_i(0),\phi)_{L^2(\dom)} 
	- \sum_{j=1}^n\int_0^t \int_\dom A_{ij}(\widetilde u(s))\na\widetilde u_j(s)
	\cdot\na\phi\dd x\dd s \\
	&\phantom{xx}{}+ \sum_{j=1}^n\bigg(\int_0^t\sigma_{ij}
	(\widetilde u(s))\dd\widetilde W_j(s),\phi\bigg)_{L^2(\dom)} 
\end{align*}
for a.e.\ $t\in[0,T]$ and all $\phi\in D(L)$. 
Set $\widetilde U=(\widetilde\Omega,\widetilde{\mathcal F},\widetilde\Prob,
\widetilde\F)$. Then $(\widetilde U,\widetilde W,\widetilde u)$ is a martingale
solution to \eqref{1.eq}--\eqref{1.SKT}.


\section{Proof of Theorem \ref{thm.skt2}}\label{sec.skt2}

We turn to the existence proof of the SKT model without self-diffusion.

\subsection{Uniform estimates}

Let $v^\eps$ be a global solution to \eqref{4.defsol}--\eqref{4.bc}
and set $u^\eps=u(R_\eps(v^\eps))$. We assume that $A(u)$ is given by
\eqref{1.SKT} and that $a_{i0}>0$, $a_{ii}=0$ for $i=1,\ldots,n$.
The uniform estimates of Lemmas \ref{lem.est} and \ref{lem.hom} are still valid.
Since $a_{ii}=0$, we obtain an $H^1(\dom)$ bound for $(u_i^\eps)^{1/2}$
instead of $u_i^\eps$, which yields weaker bounds than those in Lemma \ref{lem.hom2}.

\begin{lemma}
Let $p\ge 2$ and set $\rho_1:=(d+2)/(d+1)$. 
Then there exists a constant $C(p,u^0,T)>0$, which is independent of $\eps$, such that 
\begin{align}
  \E\|u_i^\eps\|_{L^2(0,T;W^{1,1}(\dom))}^p &\le C(p,u^0,T), \label{6.W11p} \\
	\E\|u_i^\eps\|_{L^{1+2/d}(Q_T)}^p &\le C(p,u^0,T), \label{6.12dp} \\
	\E\|u_i^\eps\|_{L^{4/d}(0,T;L^2(\dom))}^p &\le C(p,u^0,T), \label{6.4d2p} \\
  \E\|u_i^\eps\|_{L^{\rho_1}(0,T;W^{1,\rho_1}(\dom))}^p &\le C(p,u^0,T). 
	\label{6.rho1p}
\end{align}
\end{lemma}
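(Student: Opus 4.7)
The plan is to exploit three ingredients already at our disposal in the $a_{ii}=0$ setting: (i) the entropy bound $\na\sqrt{u_i^\eps}\in L^2(0,T;L^2(\dom))$ contained in \eqref{5.H1p} (the case $a_{i0}>0$); (ii) the mass bound $u_i^\eps\in L^\infty(0,T;L^1(\dom))$ from \eqref{5.L1p}; and (iii) the pointwise chain-rule identity $\na u_i^\eps=2\sqrt{u_i^\eps}\,\na\sqrt{u_i^\eps}$, which substitutes for the missing $H^1$ bound on $u_i^\eps$ itself. Every $p$-th moment estimate then follows by raising a pointwise-in-$x$ inequality to a suitable power and splitting factors with Cauchy--Schwarz or H\"older in $\widetilde{\Omega}$, reducing everything to the moments already established in Lemma~\ref{lem.hom}.

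For \eqref{6.W11p}, the Cauchy--Schwarz inequality applied to (iii) and then integrated in time gives
$$
\|\na u_i^\eps\|_{L^2(0,T;L^1(\dom))}^{2}\le 4\|u_i^\eps\|_{L^\infty(0,T;L^1(\dom))}\,\|\na\sqrt{u_i^\eps}\|_{L^2(0,T;L^2(\dom))}^{2}.
$$
Raising to the $p/2$-th power, taking expectations, and applying the Cauchy--Schwarz inequality in $\widetilde{\Omega}$ bounds this by the product of the square roots of the $p$-th moments in \eqref{5.L1p} and \eqref{5.H1p}; adding back the $L^1(\dom)$ part to recover the full $W^{1,1}$ norm is trivial.

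Estimates \eqref{6.12dp} and \eqref{6.4d2p} are standard parabolic interpolations applied to $\sqrt{u_i^\eps}$. The Gagliardo--Nirenberg inequality in the usual form
$$
\int_0^T\|\sqrt{u_i^\eps}\|_{L^r(\dom)}^{s}\,\dd t\le C\sup_t\|\sqrt{u_i^\eps}(t)\|_{L^2(\dom)}^{(1-\theta)s}\int_0^T\|\sqrt{u_i^\eps}\|_{H^1(\dom)}^{2}\,\dd t,
$$
with $s\theta=2$ and $1/r=(1/2-1/d)\theta+(1-\theta)/2$, gives, upon choosing $r=s=2+4/d$, the bound $\sqrt{u_i^\eps}\in L^{2+4/d}(Q_T)$, i.e.\ \eqref{6.12dp}, and upon choosing $r=4,\ s=8/d$ (legitimate for $d\le 2$) the bound $\sqrt{u_i^\eps}\in L^{8/d}(0,T;L^4(\dom))$, i.e.\ \eqref{6.4d2p}. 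Cauchy--Schwarz in $\widetilde{\Omega}$ applied to the product structure leaves only quantities controlled by \eqref{5.L1p}--\eqref{5.H1p}.

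For \eqref{6.rho1p}, the key point (and the only nontrivial one) is that $\rho_1=(d+2)/(d+1)$ is tuned precisely so that the H\"older conjugate $q=2\rho_1/(2-\rho_1)$ arising from
$$
\|\na u_i^\eps\|_{L^{\rho_1}(\dom)}\le 2\|\sqrt{u_i^\eps}\|_{L^q(\dom)}\|\na\sqrt{u_i^\eps}\|_{L^2(\dom)}
$$
equals $2+4/d$, matching the output of the Gagliardo--Nirenberg step above. Indeed, H\"older in $t$ with exponents $2/\rho_1$ and $2/(2-\rho_1)$ then yields
$$
\int_0^T\|\na u_i^\eps\|_{L^{\rho_1}(\dom)}^{\rho_1}\,\dd t\le C\bigg(\int_0^T\|\sqrt{u_i^\eps}\|_{L^{2+4/d}(\dom)}^{2+4/d}\,\dd t\bigg)^{\rho_1/(2+4/d)}\bigg(\int_0^T\|\na\sqrt{u_i^\eps}\|_{L^2(\dom)}^2\,\dd t\bigg)^{\rho_1/2},
$$
and both integrals are controlled by the previous steps. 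The $L^{\rho_1}(Q_T)$ part of the $W^{1,\rho_1}$ norm is trivially bounded by \eqref{6.12dp} and H\"older in time since $\rho_1<1+2/d$ for $d\le 2$. Taking $p$-th moments and using Cauchy--Schwarz in $\widetilde{\Omega}$ finishes the proof. There is no genuine obstacle beyond this exponent-matching, which is precisely why the hypothesis $d\le 2$ enters: both the Gagliardo--Nirenberg exponent and the legitimacy of the interpolation exponents break for $d\ge 3$.
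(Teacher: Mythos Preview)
Your proof is correct and follows essentially the same route as the paper: chain rule $\na u_i^\eps=2\sqrt{u_i^\eps}\,\na\sqrt{u_i^\eps}$ combined with Cauchy--Schwarz for \eqref{6.W11p}, Gagliardo--Nirenberg applied to $\sqrt{u_i^\eps}$ with the same exponent choices $r=s=2+4/d$ and $r=4$, $s=8/d$ for \eqref{6.12dp}--\eqref{6.4d2p}, and H\"older with the matched exponent $q=2\rho_1/(2-\rho_1)=2+4/d$ for \eqref{6.rho1p}. The only cosmetic difference is that the paper applies H\"older in space-time directly for \eqref{6.rho1p}, whereas you split into a spatial step followed by a temporal one; the outcome is identical. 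One minor remark: your closing comment that the exponents ``break for $d\ge 3$'' is not quite accurate for this lemma itself (the Gagliardo--Nirenberg step with $\theta=d/4$ is valid up to $d\le 4$, and the $\rho_1$ computation holds in any dimension); the restriction $d\le 2$ enters only later, in the tightness and limit-passage arguments.
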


\begin{proof}
The identity $\na u_i^{\eps}=2(u_i^{\eps})^{1/2}\na(u_i^{\eps})^{1/2}$ and
the H\"older inequality show that
\begin{align*}
  \E\|\na u_i^{\eps}\|_{L^2(0,T;L^1(\dom))}^{p}
	&\le C\E\bigg(\int_0^T\|(u_i^{\eps})^{1/2}\|_{L^2(\dom)}^2
	\|\na (u_i^{\eps})^{1/2}\|_{L^2(\dom)}^2\dd t\bigg)^{p/2} \\
  &\le C\E\bigg(\|u_i^{\eps}\|_{L^\infty(0,T;L^1(\dom))}\int_0^T
	\|\na (u_i^{\eps})^{1/2}\|_{L^2(\dom)}^2\dd t\bigg)^{p/2} \\
	&\le C\big(\E\|u_i^{\eps}\|_{L^\infty(0,T;L^1(\dom))}^{p}\big)^{1/2}
	\big(\E\|\na (u_i^{\eps})^{1/2}\|_{L^2(0,T;L^2(\dom))}^{2p}\big)^{1/2}.
\end{align*}
Because of \eqref{5.L1p} and \eqref{5.H1p}, the right-hand side is bounded.
Using \eqref{5.L1p} again, we infer that \eqref{6.W11p} holds.
Estimate \eqref{6.12dp} is obtained from the Gagliardo--Nirenberg inequality 
similarly as in the proof of Lemma \ref{lem.hom2}:
\begin{align*}
  \E\bigg(\int_0^T\|(u_i^{\eps})^{1/2}\|_{L^{r}(\dom)}^{s}\bigg)^{p/s}
	&\le C\big(\E\|(u_i^{\eps})^{1/2}\|_{L^\infty(0,T;L^2(\dom))}^{2(1-\theta)p}
	\big)^{1/2} \\
	&\phantom{xx}{}\times
	\big(\E\|(u_i^{\eps})^{1/2}\|_{L^2(0,T;H^1(\dom))}^{4p/s}\big)^{1/2} \le C,
\end{align*}
where $s=2/\theta\ge 2$ and $1/r=1/2-\theta/d=1/2-2/(ds)$. Choosing $r=(2d+4)/d$ gives 
$s=r$, and $r=4$ leads to $s=8/d$; this proves estimates \eqref{6.12dp}
and \eqref{6.4d2p}. Finally, \eqref{6.rho1p} follows from H\"older's inequality:
\begin{align*}
  \|u_i^\eps\|_{L^{\rho_1}(Q_T)}
	&= 2\|(u_i^\eps)^{1/2}\na(u_i^\eps)^{1/2}\|_{L^{\rho_1}(Q_T)} 
	\le 2\|(u_i^\eps)^{1/2}\|_{L^{(2d+4)/d}(Q_T)}\|\na(u_i^\eps)^{1/2}\|_{L^2(Q_T)} \\
  &\le 2\|u_i^\eps\|_{L^{1+2/d}(Q_T)}^{1/2}\|(u_i^\eps)^{1/2}\|_{L^2(0,T;H^1(\dom))}
\end{align*}
and taking the expectation and using \eqref{6.12dp} and \eqref{5.H1p} ends the proof.
\end{proof}

The following lemma is needed to derive the fractional time estimate.

\begin{lemma}
Let $p\ge 2$ and set $\rho_2:=(2d+2)/(2d+1)$. Then it holds for any 
$i,j=1,\ldots,n$ with $i\neq j$:
\begin{equation}\label{6.uuW11}
  \E\|u_i^\eps u_j^\eps\|_{L^{\rho_2}(0,T;W^{1,\rho_2}(\dom))}^p \le C(p,u^0,T).
\end{equation}
\end{lemma}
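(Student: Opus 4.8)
The plan is to reduce the whole estimate to a single Gagliardo--Nirenberg bound for the square root $f^\eps:=(u_i^\eps u_j^\eps)^{1/2}$, using that the entropy inequality already controls $\na f^\eps$ in $L^2(Q_T)$ through \eqref{5.nablap}. If $a_{ij}=0$, then $a_{ji}=0$ as well by the detailed-balance condition (A3), and the product $u_i^\eps u_j^\eps$ does not occur in \eqref{1.eq}, so we may assume $a_{ij}>0$ and let $C$ depend on $a_{ij}^{-1}$. First I would record two elementary properties of $f^\eps$. The pointwise identity $\na(u_i^\eps u_j^\eps)=2f^\eps\,\na f^\eps$ will handle the gradient part. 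The Cauchy--Schwarz inequality gives $\|f^\eps(t)\|_{L^1(\dom)}\le\|u_i^\eps(t)\|_{L^1(\dom)}^{1/2}\|u_j^\eps(t)\|_{L^1(\dom)}^{1/2}$, so \eqref{5.L1p} yields $\E\|f^\eps\|_{L^\infty(0,T;L^1(\dom))}^q\le C(q,u^0,T)$ for every $q\ge1$; since $d\le2$, estimate \eqref{6.12dp} gives $u_i^\eps\in L^2(Q_T)$ with all moments, hence $\|f^\eps\|_{L^2(Q_T)}^2=\|u_i^\eps u_j^\eps\|_{L^1(Q_T)}\le\|u_i^\eps\|_{L^2(Q_T)}\|u_j^\eps\|_{L^2(Q_T)}$ has all moments, and together with \eqref{5.nablap} this shows $\E\big(\int_0^T\|f^\eps\|_{H^1(\dom)}^2\,\dd t\big)^q\le C(q,u^0,T)$ for all $q\ge1$.

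The heart of the argument is a Gagliardo--Nirenberg interpolation with exponents chosen so that the spatial and temporal integrability exponents coincide. Let $a:=(2d+2)/d=2(d+1)/d$ and $\theta:=d/(d+1)\in(0,1)$; then $1/a=\theta(1/2-1/d)+(1-\theta)$ and $2/\theta=a$, so the Gagliardo--Nirenberg inequality in $\dom$ and the choice $s=2/\theta$ give
\[
  \int_0^T\|f^\eps\|_{L^a(\dom)}^a\,\dd t
  \le C\int_0^T\|f^\eps\|_{H^1(\dom)}^2\,\|f^\eps\|_{L^1(\dom)}^{(1-\theta)a}\,\dd t
  \le C\,\|f^\eps\|_{L^\infty(0,T;L^1(\dom))}^{(1-\theta)a}\int_0^T\|f^\eps\|_{H^1(\dom)}^2\,\dd t .
\]
Raising to the power $p/a$, taking the expectation, and applying H\"older's inequality in $\Omega$ together with the two moment bounds from the previous paragraph yields $\E\|f^\eps\|_{L^a(Q_T)}^p\le C(p,u^0,T)$; the same computation works verbatim with $p$ replaced by any larger exponent, which I will use below.

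Finally I would assemble the two parts of the $W^{1,\rho_2}$ norm. For the function itself, $u_i^\eps u_j^\eps=(f^\eps)^2\in L^{a/2}(Q_T)$ with $a/2=(d+1)/d\ge\rho_2$, so on the bounded cylinder $Q_T$ one has $\|u_i^\eps u_j^\eps\|_{L^{\rho_2}(Q_T)}\le C\|f^\eps\|_{L^a(Q_T)}^2$. For the gradient, the identity $\na(u_i^\eps u_j^\eps)=2f^\eps\na f^\eps$ and H\"older's inequality with $1/\rho_2=1/a+1/2$ give $\|\na(u_i^\eps u_j^\eps)\|_{L^{\rho_2}(Q_T)}\le2\|f^\eps\|_{L^a(Q_T)}\|\na f^\eps\|_{L^2(Q_T)}$. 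Taking $L^p(\Omega)$ norms, using H\"older in $\Omega$, the bound for $\|f^\eps\|_{L^a(Q_T)}$ with exponent $2p$, and \eqref{5.nablap} with exponent $2p$ then gives \eqref{6.uuW11}. The one genuine pitfall is the choice of exponents in the Gagliardo--Nirenberg step: bounding $f^\eps=(u_i^\eps u_j^\eps)^{1/2}$ by $\tfrac12(u_i^\eps+u_j^\eps)$ would lose too much integrability, so the interpolation must be applied directly to $f^\eps$, for which only the $L^\infty(0,T;L^1)$ and $L^2(0,T;H^1)$ bounds are needed --- and both are already in hand.
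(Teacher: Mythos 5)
Your proof takes essentially the same route as the paper's: Gagliardo--Nirenberg applied to $f^\eps:=(u_i^\eps u_j^\eps)^{1/2}$ with $\theta=d/(d+1)$ gives the $L^{2(d+1)/d}(Q_T)$ bound, and the identity $\na(u_i^\eps u_j^\eps)=2f^\eps\na f^\eps$ together with H\"older with $1/\rho_2=d/(2d+2)+1/2$ finishes the argument; the exponent arithmetic checks out. Your observation that one may assume $a_{ij}>0$ (detailed balance forces $a_{ji}=0$ otherwise, so the product never appears in the equations and \eqref{5.nablap} is vacuous for that pair) is a genuine refinement that the paper leaves implicit.

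The one real gap is in the moment bound for $\|f^\eps\|_{L^2(0,T;H^1(\dom))}$ (estimate \eqref{6.uu12H1} in the paper). You reach the $L^2$ part through $\|f^\eps\|_{L^2(Q_T)}^2\le\|u_i^\eps\|_{L^2(Q_T)}\|u_j^\eps\|_{L^2(Q_T)}$ and estimate \eqref{6.12dp}, which places $u_i^\eps$ in $L^{1+2/d}(Q_T)$; that exponent is $\ge 2$ only when $d\le 2$, so your argument implicitly assumes $d\le 2$. The paper instead combines $\E\|f^\eps\|_{L^\infty(0,T;L^1(\dom))}^p\le C$ (which you also established via Cauchy--Schwarz and \eqref{5.L1p}) with the Poincar\'e--Wirtinger inequality $\|f^\eps\|_{L^2(\dom)}\le C\big(\|\na f^\eps\|_{L^2(\dom)}+\|f^\eps\|_{L^1(\dom)}\big)$ and the gradient bound \eqref{5.nablap}; this is dimension-free. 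The distinction is not cosmetic: the lemma carries no restriction on $d$, and the paper explicitly invokes \eqref{6.uuW11} for $d=3$ in the remark at the end of Section~\ref{sec.skt2}. Replacing your $L^2(Q_T)$ detour with Poincar\'e--Wirtinger repairs this and leaves the rest of your argument unchanged.
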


\begin{proof}
The H\"older inequality and \eqref{5.L1p} immediately yield 
\begin{equation*}
  \E\|(u_i^\eps u_j^\eps)^{1/2}\|_{L^\infty(0,T;L^1(\dom))}^p \le C,
\end{equation*}
and we conclude from the Poincar\'e--Wirtinger inequality, estimate 
\eqref{5.nablap}, and the previous estimate that
\begin{equation}\label{6.uu12H1}
  \E\|(u_i^\eps u_j^\eps)^{1/2}\|_{L^2(0,T;H^1(\dom))}^p \le C.
\end{equation}
By the Gagliardo--Nirenberg inequality, with $\theta=d/(d+1)$,
\begin{align*}
  \int_0^T\|(u_i^\eps u_j^\eps)^{1/2}\|_{L^{2(d+1)/d}(\dom)}^{2(d+1)/d}\dd t
	&\le C\int_0^T\|(u_i^\eps u_j^\eps)^{1/2}\|_{H^1(\dom)}^{2\theta(d+1)/d}
	\|(u_i^\eps u_j^\eps)^{1/2}\|_{L^1(\dom)}^{2(1-\theta)(d+1)/d}\dd t \\
	&\le C\|(u_i^\eps u_j^\eps)^{1/2}\|_{L^\infty(0,T;L^1(\dom))}^{2(1-\theta)(d+1)/d}
	\int_0^T\|(u_i^\eps u_j^\eps)^{1/2}\|_{H^1(\dom)}^2 \dd t \\
	&= C\|(u_i^\eps u_j^\eps)^{1/2}\|_{L^\infty(0,T;L^1(\dom))}^{2/d}
	\|(u_i^\eps u_j^\eps)^{1/2}\|_{L^2(0,T;H^1(\dom))}^2.
\end{align*}
Taking the expectation and applying the H\"older inequality, we infer that
\begin{equation}\label{6.uu12L}
  \E\|(u_i^\eps u_j^\eps)^{1/2}\|_{L^{2(d+1)/d}(Q_T)}^p \le C.
\end{equation}
Finally, the identity $\na(u_i^\eps u_j^\eps) = 2(u_i^\eps u_j^\eps)^{1/2}
\na(u_i^\eps u_j^\eps)^{1/2}$ and H\"older's inequality lead to
\begin{align*}
  \int_0^T&\|\na(u_i^\eps u_j^\eps)\|_{L^{\rho_2}(\dom)}^{\rho_2}\dd t
	\le C\int_0^T\|(u_i^\eps u_j^\eps)^{1/2}\|_{L^{2(d+1)/d}(\dom)}^2
	\|\na(u_i^\eps u_j^\eps)^{1/2}\|_{L^2(\dom)}^2\dd t \\
  &\le C\bigg(\int_0^T\|(u_i^\eps u_j^\eps)^{1/2}\|_{L^{2(d+1)/d}(\dom)}^{2(d+1)/d}
	\dd t\bigg)^{1-\rho_2/2}\bigg(\int_0^T
	\|\na(u_i^\eps u_j^\eps)^{1/2}\|_{L^2(\dom)}^2\dd t\bigg)^{\rho_2/2}.
\end{align*}
The bounds \eqref{6.uu12H1}--\eqref{6.uu12L} yield, 
after taking the expectation and applying
H\"older's inequality again, the conclusion \eqref{6.uuW11}.
\end{proof}

We show now that the fractional time derivative of $u^\eps$ is uniformly bounded.

\begin{lemma}[Fractional time regularity]
Let $d\le 2$. Then there exist $0<\alpha<1$, $p>1$, and $\beta>0$ such that
$\alpha p>1$ and
\begin{equation}
	\E\|u^\eps\|_{W^{\alpha,p}(0,T;D(L)')}^p + \E\|u^\eps\|_{C^{0,\beta}([0,T];D(L)')}^p
	\le C. \label{6.ualpha}
\end{equation}
\end{lemma}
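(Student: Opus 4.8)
The plan is to follow the scheme of Lemma~\ref{lem.frac}, but to move the Laplacian onto the test function in order to cope with the weaker gradient bounds available without self-diffusion. Since $a_{ii}=0$, identity \eqref{1.form} lets us write \eqref{4.defsol} componentwise as $\dd v_i^\eps=\Delta g_i^\eps\,\dd t+\sum_{j=1}^n\sigma_{ij}(u^\eps)\,\dd W_j$ in $D(L)'$, where $g_i^\eps:=u_i^\eps(a_{i0}+\sum_{j=1}^n a_{ij}u_j^\eps)$ and $\Delta g_i^\eps$ acts on $\phi\in D(L)$ through $\int_\dom g_i^\eps\,\Delta\phi\,\dd x$, as in the very weak formulation of Theorem~\ref{thm.skt2}. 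Because $D(L)=H^m_N(\dom)$ with $m>d/2+1$ and $d\le 2$, we have the embedding $D(L)\hookrightarrow W^{2,\kappa'}(\dom)$ with $\kappa:=(d+1)/d$, $\kappa'=d+1$, hence $\|\Delta g\|_{D(L)'}\le C\|g\|_{L^\kappa(\dom)}$. As in Lemma~\ref{lem.frac}, the $L^p(0,T;D(L)')$ part of the Sobolev--Slobodeckij norm of $u^\eps$ is controlled by \eqref{5.L1p} and the embedding $L^1(\dom)\hookrightarrow D(L)'$, while the Lipschitz continuity of $R_\eps$ (Lemma~\ref{lem.Reps}), together with \eqref{3.LL} and $u^\eps=v^\eps-\eps L^*LR_\eps(v^\eps)$, reduces the seminorm of $u^\eps$ to that of $v^\eps$. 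Thus it suffices to estimate $\E|v_i^\eps|_{W^{\alpha,p}(0,T;D(L)')}^p\le J_1+J_2$, where $J_1$ and $J_2$ collect the drift and the stochastic contributions to the seminorm.

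For $J_1$, I would insert $\|\Delta g_i^\eps\|_{D(L)'}\le C\|g_i^\eps\|_{L^\kappa(\dom)}$, apply H\"older's inequality in the time variable to extract a factor $|t-s|^{p/\kappa'}$, and bound the remaining time integral over $(s\wedge t,s\vee t)$ by $\|g_i^\eps\|_{L^\kappa(Q_T)}^p$, arriving at
\begin{equation*}
  J_1 \le C\,\E\big(\|g_i^\eps\|_{L^\kappa(Q_T)}^p\big)
	\int_0^T\int_0^T|t-s|^{-1-\alpha p+p/\kappa'}\,\dd t\,\dd s.
\end{equation*}
As $g_i^\eps$ is a combination of $u_i^\eps$ and of the products $u_i^\eps u_j^\eps$, and $\kappa=(d+1)/d\le 1+2/d$, the estimates \eqref{6.12dp} and \eqref{6.uu12L} give $\E\|g_i^\eps\|_{L^\kappa(Q_T)}^p\le C$ for every $p\ge 2$, whereas the double time integral converges provided $1+\alpha p-p/\kappa'<1$, i.e. $\alpha<1/\kappa'=1/(d+1)$.

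For $J_2$, I would use the embedding $L^2(\dom)\hookrightarrow D(L)'$, the Burkholder--Davis--Gundy inequality \cite[Theorem~6.1.2]{LiRo15}, and the strengthened growth bound $\|\sigma(v)\|_{\L_2(U;L^2(\dom))}\le C_\sigma(1+\|v\|_{L^2(\dom)}^\gamma)$ to reduce $J_2$ to a double time integral, against the kernel $|t-s|^{-1-\alpha p}$, of $\E\big(\int_{s\wedge t}^{s\vee t}(1+\|u^\eps(r)\|_{L^2(\dom)}^{2\gamma})\,\dd r\big)^{p/2}$. When $d=2$, where $\gamma<1$ and \eqref{6.4d2p} yields only the control $\E\|u^\eps\|_{L^2(0,T;L^2(\dom))}^p\le C$, H\"older's inequality with exponents $1/\gamma$ and $1/(1-\gamma)$ extracts a factor $|t-s|^{(1-\gamma)p/2}$, and the double integral converges if $\alpha<(1-\gamma)/2$. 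When $d=1$, where $\gamma=1$ but \eqref{6.4d2p} yields $\E\|u^\eps\|_{L^4(0,T;L^2(\dom))}^p\le C$, the Cauchy--Schwarz inequality in the inner variable extracts $|t-s|^{p/4}$, and the double integral converges if $\alpha<1/4$.

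To finish, I would fix $\alpha>0$ strictly below $\min\{1/(d+1),(1-\gamma)/2\}$ if $d=2$ and strictly below $1/4$ if $d=1$, then choose $p\ge 2$ with $p>1/\alpha$, so that $\alpha p>1$; this gives the first bound in \eqref{6.ualpha}, and the second follows with $\beta:=\alpha-1/p>0$ from the continuous embedding $W^{\alpha,p}(0,T)\hookrightarrow C^{0,\beta}([0,T])$. I expect the main obstacle to be the stochastic term $J_2$ in dimension two: there the time integrability of $u^\eps$ is only the borderline $L^2(0,T;L^2(\dom))$, so a positive H\"older exponent $\alpha$ can be produced only by trading against the sublinear growth $\gamma<1$ of the noise --- which is exactly why Assumption~(A4) must be strengthened in this regime.
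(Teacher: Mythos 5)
Your proposal is correct, and for the drift term it takes a genuinely different route from the paper. The paper keeps one divergence on the flux, estimates $\|\diver(\cdot)\|_{D(L)'}$ via $D(L)\hookrightarrow W^{1,\infty}(\dom)$, and feeds the $L^1(\dom)$-norms of $\na u_i^\eps$ and $\na(u_i^\eps u_j^\eps)$ into the embedding $W^{1,p}(0,T)\hookrightarrow W^{\alpha,p}(0,T)$ together with \eqref{6.rho1p} and \eqref{6.uuW11}; this forces $p$ below $(2d+2)/(2d+1)$ and hence $\alpha$ close to $1$. You instead move the Laplacian onto the test function (exactly the very weak formulation of Theorem~\ref{thm.skt2}), use $D(L)\hookrightarrow W^{2,d+1}(\dom)$ so that $\|\Delta g_i^\eps\|_{D(L)'}\le C\|g_i^\eps\|_{L^{(d+1)/d}(\dom)}$, and exploit only the gradient-free space-time bounds \eqref{6.12dp} and \eqref{6.uu12L}, which hold for every moment $p\ge 2$. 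This buys a constraint $\alpha<1/(d+1)$ with $p$ arbitrarily large, so the same $(\alpha,p)$ pair works simultaneously for the drift and the noise. The paper's two sub-estimates, by contrast, demand small $p$ (drift) but larger $p$ (noise), and reconciling them implicitly requires an intermediate H\"older-space argument that the paper leaves tacit; your decomposition avoids that step entirely and is, in that sense, cleaner and more in harmony with the solution concept used in Theorem~\ref{thm.skt2}. For the noise term the two proofs use the same BDG set-up, but you apply H\"older inside the inner time integral to extract the extra factor $|t-s|^{(1-\gamma)p/2}$ (resp.\ $|t-s|^{p/4}$ for $d=1$) and control the remainder by the global $L^{4/d}(0,T;L^2(\dom))$ moment, whereas the paper pushes the Jensen inequality through and imposes $\gamma p\le 4/d$; the resulting admissible windows for $\alpha$ differ but both are nonempty under the strengthened Assumption~(A4), and both isolate $\gamma<1$ as the critical hypothesis when $d=2$.
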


\begin{proof}
We proceed similarly as in the proof of Lemma \ref{lem.frac}. First, we estimate
the diffusion part, setting
$$
  g(t) = \int_0^t\bigg\|a_{i0}\na u_i^\eps + \sum_{j\neq i}a_{ij}
	\na(u_i^\eps u_j^\eps)\bigg\|_{L^1(\dom)}\dd r.
$$
Then, using $D(L)\subset W^{1,\infty}(\dom)$ (which holds due to the assumption
$m>d/2+1$),
\begin{align*}
  \E&\int_0^T\int_0^T|t-s|^{-1-\alpha p}\bigg\|\int_{s\wedge t}^{t\lor s}
	\diver\sum_{j=1}^n A_{ij}(u^\eps(r))\na u_j^\eps(r)\dd r\bigg\|_{D(L)'}^p\dd t\dd s \\
	&\le C\int_0^T\int_0^T|t-s|^{-1-\alpha p}\bigg(\int_{s\wedge t}^{t\lor s}
	\bigg\|a_{i0}\na u_i^\eps + \sum_{j\neq i}a_{ij}
	\na(u_i^\eps u_j^\eps)\bigg\|_{L^1(\dom)}\dd r\bigg)^p\dd t\dd s \\
	&\le C\E\int_0^T\int_0^T\frac{|g(t)-g(s)|^p}{|t-s|^{1+\alpha p}}\dd t\dd s
	\le C\E\|g\|_{W^{\alpha,p}(0,T;\R)}^p.
\end{align*}
The embedding $W^{1,p}(0,T;\R)\hookrightarrow W^{\alpha,p}(0,T;\R)$ 
and estimates \eqref{6.uuW11} and \eqref{6.rho1p} 
show that for $1\le p\le\rho_1 = (d+2)/(d+1)$,
\begin{align*}
  \E&\|g\|_{W^{\alpha,p}(0,T;\R)}^p
	\le C\E\|g\|_{W^{1,p}(0,T;\R)}^p
	= C\E\|\pa_t g\|_{L^p(0,T;\R)}^p + C\E\|g\|_{L^p(0,T;\R)}^p \\
	&\le C\E\int_0^T\bigg\|a_{i0}\na u_i^\eps(t) + \sum_{j\neq i}a_{ij}
	\na(u_i^\eps u_j^\eps)(t)\bigg\|_{L^1(\dom)}^p \dd t \\
	&\phantom{xx}{}+ C\E\int_0^T\int_0^t\bigg\|a_{i0}\na u_i^\eps(r) 
	+ \sum_{j\neq i}a_{ij}\na(u_i^\eps u_j^\eps)(r)\bigg\|_{L^1(\dom)}^p \dd r\dd t
	\le C.
\end{align*}
Next, we consider the stochastic part, using the Burkholder--Davis--Gundy
inequality, H\"older's inequality, and the sublinear growth condition
in the statement of the theorem:
\begin{align*}
  \E&\int_0^T\int_0^T|t-s|^{-1-\alpha p}\bigg\|\int_{s\wedge t}^{t\lor s}
	\sum_{j=1}^n\sigma_{ij}(u^\eps(r))\dd W_j(r)\bigg\|_{L^2(\dom)}^p\dd t\dd s \\
	&\le C\E\int_0^T\int_0^T|t-s|^{-1-\alpha p}\bigg(\int_{s\wedge t}^{t\lor s}
	\sum_{j=1}^n\|\sigma_{ij}(u^\eps(r))\|_{{\mathcal L}_2(U;L^2(\dom))}^2\dd r
	\bigg)^{p/2}\dd t\dd s \\
	&\le  C\int_0^T\int_0^T|t-s|^{-1-\alpha p+p/2-1}\E\int_{s\wedge t}^{t\lor s}
	\sum_{j=1}^n\|\sigma_{ij}(u^\eps(r))\|_{{\mathcal L}_2(U;L^2(\dom))}^p\dd r
	\dd t\dd s \\
	&\le C\int_0^T\int_0^T|t-s|^{-1-\alpha p+p/2-1}\E\int_{s\wedge t}^{t\lor s}
	\sum_{j=1}^n(1+\|u^\eps(r)\|_{L^2(\dom)}^{\gamma p})\dd r\dd t\dd s \le C.
\end{align*}
The last step follows from estimate \eqref{6.4d2p} (assuming that $1\le\gamma p\le 4/d$)
and Lemma \ref{lem.g}, since $1+\alpha p-p/2+1<2$ if and only if $\alpha<1/2$. 
We conclude that the second term of the right-hand side of
$$
  v^\eps(t) = v^\eps(0) + \int_0^t\diver(A(u^\eps(s))\na u^\eps(s))\dd s
	+ \int_0^t\sigma(u^\eps(s))\dd W(s)
$$
is uniformly bounded in $\E|\cdot|_{W^{\alpha,p}(0,T;D(L)')}$ 
for $\alpha<1$ and $p\le(d+2)/(d+1)$, while the third term is uniformly
bounded in that norm for $\alpha<1/2$ and $p\le 4/(\gamma d)$. In both cases, we
can choose $\alpha$ such that $\alpha p>1$. At this point, we need the
condition $\gamma<1$ if $d=2$. (The result holds for any space dimension
if $\gamma<2/d$.)
Taking into account \eqref{5.vp}, $(v^\eps)$ is bounded in $W^{\alpha,p}(0,T;D(L)')$.
The embedding $W^{\alpha,p}(0,T;D(L)')\hookrightarrow C^{0,\beta}([0,T];D(L)')$ 
for $\beta=\alpha-1/p>0$ implies that $(v^\eps)$ is bounded in the latter space.

We turn to the estimate of $u^\eps$ in the $W^{\alpha,p}(0,T;D(L)')$ norm:
$$
  \E\|u^\eps\|_{W^{\alpha,p}(0,T;D(L)')}^p
	\le C\big(\E\|v^\eps\|_{W^{\alpha,p}(0,T;D(L)')}^p
	+ \eps\E\|L^*LR_\eps(v^\eps)\|_{W^{\alpha,p}(0,T;D(L)')}^p\big).
$$
It remains to consider the last term. In view of estimate \eqref{3.LL} and
the Lipschitz continuity of $R_\eps$ with Lipschitz constant $C/\eps$, we obtain
\begin{align*}
  \E&|\eps L^*LR_\eps(v^\eps)|_{W^{\alpha,p}(0,T;D(L)')}^p \\
	&= \eps^p\E\int_0^T\int_0^T|t-s|^{-1-\alpha p}\|L^*LR_\eps(v^\eps(t))
	- L^*LR_\eps(v^\eps(s))\|_{D(L)'}^p\dd t\dd s \\
	&\le \eps^p C\E\int_0^T\int_0^T|t-s|^{-1-\alpha p}\|R_\eps(v^\eps(t))
	- R_\eps(v^\eps(s))\|_{D(L)}^p \dd t\dd s \\
	&\le \eps^p C\E\int_0^T\int_0^T|t-s|^{-1-\alpha p}\frac{C}{\eps^p}
	\|v^\eps(t)-v^\eps(s)\|_{D(L)'}^p\dd t\dd s \\
	&= C\E\|v^\eps\|_{W^{\alpha,p}(0,T;D(L)')}^p \le C.
\end{align*}
Moreover, by \eqref{3.LL} and the Lipschitz continuity of $R_\eps$ again,
$$
  \|\eps L^*LR_\eps(v^\eps)\|_{L^p(0,T;D(L)')}^p
	\le \eps^p C\|R_\eps(v^\eps)\|_{L^p(0,T;D(L))}^p
	\le \eps^p C\|v^\eps\|_{L^p(0,T;D(L)')}^p \le C,
$$
where we used estimate \eqref{5.vp}. This finishes the proof.
\end{proof}


\subsection{Tightness of the laws of $(u^\eps)$}

The tightness is shown in a different sub-Polish space
than in Section \ref{sec.tight}:
$$
  \widetilde{Z}_T := C^0([0,T];D(L)')\cap 
	L_w^{\rho_1}(0,T;W^{1,\rho_1}(\dom)),
$$
endowed with the topology $\widetilde{\mathbb{T}}$ that is the maximum of the
topology of $C^0([0,T];D(L)')$ and the weak topology of 
$L_w^{\rho_1}(0,T;W^{1,\rho_1}(\dom))$, recalling that $\rho_1=(d+2)/(d+1)>1$.

\begin{lemma}
The family of laws of $(u^\eps)$ is tight in
$$
  Z_T := \widetilde{Z}_T \cap L^2(0,T;L^{2}(\dom))
$$
with the topology that is the maximum of $\widetilde{\mathbb{T}}$ and the 
topology induced by the $L^2(0,T;L^{2}(\dom))$ norm.
\end{lemma}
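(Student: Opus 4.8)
The plan is to mirror the three-step proof of Lemma~\ref{lem.tight1}, adapting it to the weaker gradient estimate \eqref{6.rho1p} that replaces the $L^2(0,T;H^1(\dom))$ bound available in the self-diffusion case. First I would prove tightness of the laws of $(u^\eps)$ in $C^0([0,T];D(L)')$. Chebyshev's inequality applied to the first moment together with the uniform H\"older-in-time bound \eqref{6.ualpha} yields the Aldous condition: for every $\eta,\theta>0$ there is $\delta>0$ with
$$
  \sup_{\eps>0}\Prob\Big(\sup_{s,t\in[0,T],\,|t-s|\le\delta}\|u^\eps(t)-u^\eps(s)\|_{D(L)'}>\eta\Big)\le\theta,
$$
exactly as in Lemma~\ref{lem.tight1}. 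Together with the uniform bound \eqref{6.rho1p} in $L^{\rho_1}(0,T;W^{1,\rho_1}(\dom))$ and the compact embedding $W^{1,\rho_1}(\dom)\hookrightarrow\hookrightarrow D(L)'$ (which holds for $d\le 2$, e.g.\ via $W^{1,\rho_1}(\dom)\hookrightarrow\hookrightarrow L^2(\dom)\hookrightarrow D(L)'$), Simon's criterion \cite[Lemma~5, Theorem~3]{Sim87} — used with $X=W^{1,\rho_1}(\dom)$ and $B=D(L)'$ as in Lemma~\ref{lem.tight1} — gives precompactness of $(u^\eps)$ in $C^0([0,T];D(L)')$, and the basic criterion \cite[Chapter~II, Section~2.1]{Met88} turns this into tightness of the laws.

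Next I would treat the two remaining topologies. Since $\rho_1=(d+2)/(d+1)>1$, the space $L^{\rho_1}(0,T;W^{1,\rho_1}(\dom))$ is reflexive, so its closed balls are weakly compact; Chebyshev's inequality together with \eqref{6.rho1p} then gives tightness of the laws of $(u^\eps)$ in $L_w^{\rho_1}(0,T;W^{1,\rho_1}(\dom))$. For the strong topology of $L^2(0,T;L^2(\dom))$, I would invoke the tightness criterion of Appendix~\ref{sec.aux} (Lemma~\ref{lem.tight}, extending \cite[Corollary~2.6]{BrMo14}) with $W^{1,\rho_1}(\dom)$ playing the role played by $H^1(\dom)$ in Lemma~\ref{lem.tight1}. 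The structural ingredient is the compact Sobolev embedding $W^{1,\rho_1}(\dom)\hookrightarrow\hookrightarrow L^2(\dom)$, valid precisely because $d\le 2$: for $d=2$ one has $\rho_1=4/3$ with Sobolev exponent $\rho_1^*=4>2$, so $W^{1,4/3}(\dom)$ embeds compactly into $L^2(\dom)$ by Rellich's theorem, while for $d=1$, $W^{1,3/2}(\dom)\hookrightarrow\hookrightarrow C^0(\overline\dom)$. The remaining inputs are \eqref{6.rho1p} and the higher-moment bound \eqref{6.4d2p} — which equals $L^2(0,T;L^2(\dom))$ if $d=2$ and $L^4(0,T;L^2(\dom))$ if $d=1$, and thus supplies the extra time integrability needed to upgrade weak convergence to strong — together with the $C^0([0,T];D(L)')$-tightness from the first step. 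Taking the maximum of the three topologies yields tightness of the laws of $(u^\eps)$ in $Z_T$.

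The delicate point is this last step: the gradient estimate lives only in $L^{\rho_1}(0,T;W^{1,\rho_1}(\dom))$ with $\rho_1$ barely above $1$, so the $H^1$-based argument of Lemma~\ref{lem.tight1} cannot be reused verbatim. One must verify that $\rho_1^*=d\rho_1/(d-\rho_1)>2$, which holds exactly for $d\le 2$ (indeed for $d\le 3$), so that $W^{1,\rho_1}(\dom)\hookrightarrow\hookrightarrow L^2(\dom)$, and then check that the time integrability furnished by \eqref{6.4d2p} together with the $C^{0,\beta}([0,T];D(L)')$ bound from \eqref{6.ualpha} satisfies the hypotheses of the criterion in Appendix~\ref{sec.aux}. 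This is exactly where the dimensional restriction $d\le 2$ of Theorem~\ref{thm.skt2} enters the tightness argument.
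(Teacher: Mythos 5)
Your decomposition into the three topologies of $Z_T$ is the right one, and the arguments for the $C^0([0,T];D(L)')$ and $L_w^{\rho_1}(0,T;W^{1,\rho_1}(\dom))$ components go through as in Lemma~\ref{lem.tight1}. The gap is in the step that claims tightness in $L^2(0,T;L^2(\dom))$.

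Lemma~\ref{lem.tight} couples the time exponent of the gradient bound to the time exponent of the target space: to conclude tightness in $L^p(0,T;L^s(\dom))$ you need $\E\|u_n\|_{L^p(0,T;W^{1,q}(\dom))}\le C$ with \emph{that same} $p$. The estimate \eqref{6.rho1p} only provides $p=\rho_1=(d+2)/(d+1)<2$, so choosing $q=\rho_1$, $s=2$ gives you tightness in $L^{\rho_1}(0,T;L^2(\dom))$ and nothing more. You then try to upgrade the time exponent from $\rho_1$ to $2$ using the moment bound \eqref{6.4d2p}. But for $d=2$ that bound lives in \emph{exactly} $L^2(0,T;L^2(\dom))$, and the endpoint cannot be reached: for a sequence that is precompact in $L^{\rho_1}(0,T;L^2(\dom))$ and merely bounded in $L^2(0,T;L^2(\dom))$, the interpolation $\|f_n-f\|_{L^q(0,T;L^2)}\le\|f_n-f\|_{L^{\rho_1}(0,T;L^2)}^\theta\,\|f_n-f\|_{L^2(0,T;L^2)}^{1-\theta}$ reaches $q=2$ only when $\theta=0$. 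You would need a uniform bound in $L^{2+\delta}(0,T;L^2(\dom))$ for some $\delta>0$ to obtain the required equi-integrability in time, and the entropy estimate does not supply it when $d=2$ and $a_{ii}=0$. (For $d=1$, where \eqref{6.4d2p} gives $L^4$, your interpolation does close — but the statement covers $d=2$.)

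The paper closes the endpoint by a different mechanism. It keeps the time exponent at $p=2$ — at the cost of the weaker spatial space $W^{1,1}(\dom)$, using the $L^2(0,T;W^{1,1}(\dom))$ bound \eqref{6.W11p} — and so gets tightness in $L^2(0,T;L^q(\dom))$ only for $q<d/(d-1)=2$. The upgrade from $q<2$ to $q=2$ is then a \emph{spatial} integrability improvement, driven by the entropy bound: $u_i^\eps\log u_i^\eps$ is uniformly bounded in $L^\infty(0,T;L^1(\dom))$, which by de la Vall\'ee--Poussin furnishes the required equi-integrability, and together with the gradient estimate for $(u_i^\eps)^{1/2}$ from \eqref{5.H1} (where $a_{i0}>0$ is used) one invokes \cite[Prop.~1]{BCJ20}. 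Your proposal bypasses the entropy bound entirely; without it, or some other substitute for equi-integrability, the $L^2(Q_T)$ tightness claim is not justified for $d=2$.
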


\begin{proof}
The tightness in $L^2(0,T;L^q(\dom))$ for $q<d/(d-1)=2$ is a consequence of
the compact embedding $W^{1,1}(\dom)\hookrightarrow L^q(\dom)$ as well as
estimates \eqref{6.W11p} and \eqref{6.ualpha}. In fact, we can extend this result
up to $q=2$ because of the uniform bound of $u_i^\eps\log u_i^\eps$
in $L^\infty(0,T;L^1(\dom))$, which originates from the entropy estimate.
Indeed, we just apply \cite[Prop.~1]{BCJ20}, using additionally \eqref{5.H1}
with $a_{i0}>0$. Then the tightness in $L^2(0,T;L^2(\dom))$
follows from Lemma \ref{lem.tight}.
Finally, the tightness in $\widetilde{Z}_T$ is shown as
in the proof of Lemma \ref{lem.tight1} in Appendix \ref{sec.aux}.
\end{proof}

In three space dimensions, we do not obtain tightness in $L^2(0,T;L^2(\dom))$
but in the larger space $L^{4/3}(0,T;L^2(\dom))$.
This follows similarly as in the proof
of Lemma \ref{lem.tight1} taking into account the compact embedding
$W^{1,\rho_1}(\dom)\hookrightarrow L^2(\dom)$, which holds as long as $d\le 3$,
as well as estimates \eqref{6.rho1p} and \eqref{6.ualpha}. 
Unfortunately, this result seems to be not sufficient to identify the limit
of the product $\widetilde{u}_i^\eps\widetilde{u}_j^\eps$.
Therefore, we restrict ourselves to the two-dimensional case.

The following result is shown exactly as in Lemma \ref{lem.tighteps}.

\begin{lemma}
The family of laws of $(\sqrt{\eps}L^*LR_\eps(v^\eps))$ is tight in 
$Y_T=L_w^2(0,T;D(L)')\cap L^\infty_{w*}(0,T;D(L)')$.
\end{lemma}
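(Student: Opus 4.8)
The plan is to mimic the proof of Lemma \ref{lem.tighteps} verbatim, since the quantity $\sqrt{\eps}L^*LR_\eps(v^\eps)$ and the target space $Y_T=L^2_w(0,T;D(L)')\cap L^\infty_{w*}(0,T;D(L)')$ are exactly the same; only the ambient construction of $v^\eps$ has changed (no self-diffusion, $d\le2$), but the estimates we rely on are the ones that still hold in this regime. First I would recall that by the inequality $\|L^*Lw\|_{D(L)'}\le C\|Lw\|_{L^2(\dom)}$ (from the first bound in \eqref{3.LL}) together with estimate \eqref{5.v} — which is still valid here, as noted at the start of Section \ref{sec.skt2} — we have the uniform bound $\eps\,\E\|LR_\eps(v^\eps)\|_{L^\infty(0,T;L^2(\dom))}^2\le C(u^0,T)$, hence $\E\|\sqrt{\eps}L^*LR_\eps(v^\eps)\|_{L^\infty(0,T;D(L)')}^2\le C$ and a fortiori $\E\|\sqrt{\eps}L^*LR_\eps(v^\eps)\|_{L^2(0,T;D(L)')}^2\le C$.

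Next I would apply Chebyshev's inequality: for any $K>0$,
\begin{equation*}
  \Prob\big(\|\sqrt{\eps}L^*LR_\eps(v^\eps)\|_{L^2(0,T;D(L)')}>K\big)
	\le \frac{1}{K^2}\E\|\sqrt{\eps}L^*LR_\eps(v^\eps)\|_{L^2(0,T;D(L)')}^2
	\le \frac{C}{K^2},
\end{equation*}
so that for every $\delta>0$ there is $K_\delta>0$ with
$\Prob(\|\sqrt{\eps}L^*LR_\eps(v^\eps)\|_{L^2(0,T;D(L)')}\le K_\delta)\ge 1-\delta$
uniformly in $\eps$. Since closed balls of $L^2(0,T;D(L)')$ are weakly compact (the space is a Hilbert space, hence reflexive), the sets $\{\|\cdot\|_{L^2(0,T;D(L)')}\le K_\delta\}$ are compact in the weak topology $\mathbb{T}_Y$ restricted to $L^2_w(0,T;D(L)')$, which gives tightness of the laws of $(\sqrt{\eps}L^*LR_\eps(v^\eps))$ in $L^2_w(0,T;D(L)')$.

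For the $L^\infty_{w*}(0,T;D(L)')$ component I would argue analogously, using the stronger bound $\E\|\sqrt{\eps}L^*LR_\eps(v^\eps)\|_{L^\infty(0,T;D(L)')}^2\le C$: by Chebyshev, $\Prob(\|\sqrt{\eps}L^*LR_\eps(v^\eps)\|_{L^\infty(0,T;D(L)')}>K)\le C/K^2$, and closed balls of $L^\infty(0,T;D(L)')=(L^1(0,T;D(L)))'$ are weak* compact by the Banach--Alaoglu theorem (here $D(L)'$ is separable, so the weak* topology on such balls is metrizable and sequential compactness coincides with compactness). Intersecting, tightness in $Y_T$ with the topology $\mathbb{T}_Y$ that is the maximum of the two follows, because a set that is precompact in each of two topologies is precompact in their supremum. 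I do not expect any genuine obstacle here: the only point that needs a one-line justification is that the uniform estimates from Section \ref{sec.skt} carry over unchanged, which is exactly the remark made at the opening of this subsection.
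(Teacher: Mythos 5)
Your proposal is correct and follows the paper's own proof essentially verbatim: the paper simply states that this lemma ``is shown exactly as in Lemma~\ref{lem.tighteps},'' which is precisely the Chebyshev plus weak (resp.\ weak*) compactness of balls argument you reproduce. One minor caveat: your blanket statement that ``a set precompact in each of two topologies is precompact in their supremum'' is false in full generality, but it does hold here because both topologies restricted to bounded balls are metrizable (separability of $D(L)'$ and of the predual $L^1(0,T;D(L))$), so a diagonal subsequence argument gives sequential, hence genuine, compactness in the joint topology — which is the implicit justification in the paper's sub-Polish framework.
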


Arguing as in Section \ref{sec.conv1}, the Skorokhod--Jakubowski theorem implies
the existence of a subsequence, a probability space
$(\widetilde\Omega,\widetilde{\mathcal{F}},\widetilde{\Prob})$, and, on this space,
$(Z_T\times Y_T\times C^0([0,T];U_0))$-valued random variables
$(\widetilde{u}^\eps,\widetilde{w}^\eps,\widetilde{W}^\eps)$
and $(\widetilde{u},\widetilde{w},\widetilde{W})$ such that
$(\widetilde{u}^\eps,\widetilde{w}^\eps,\widetilde{W}^\eps)$
has the same law as $(u^\eps,\sqrt{\eps}L^*LR_\eps(v^\eps),W)$ 
on $\mathcal{B}(Z_T\times Y_T\times C^0([0,T];U_0))$ and, as 
$\eps\to 0$ and $\widetilde{\Prob}$-a.s.,
\begin{equation*}
  (\widetilde{u}^\eps,\widetilde{w}^\eps,\widetilde{W}^\eps)
	\to (\widetilde{u},\widetilde{w},\widetilde{W})
	\quad\mbox{in }Z_T\times Y_T\times C^0([0,T];U_0).
\end{equation*}
This convergence means that $\widetilde{\Prob}$-a.s.,
\begin{align*}
  \widetilde{u}^\eps\to\widetilde{u} &\quad\mbox{strongly in }C^0([0,T];D(L)'), \\
  \na\widetilde{u}^\eps\rightharpoonup\na\widetilde{u} &\quad\mbox{weakly in }
	L^{\rho_1}(Q_T), \\
	\widetilde{u}^\eps\to\widetilde{u} &\quad\mbox{strongly in }L^{2}(Q_T), \\
	\widetilde{w}^\eps\rightharpoonup\widetilde{w} &\quad\mbox{weakly in }
	L^2(0,T;D(L)'), \\
	\widetilde{w}^\eps\rightharpoonup\widetilde{w} &\quad\mbox{weakly* in }
	L^\infty(0,T;D(L)'), \\
	\widetilde{W}^\eps\to \widetilde{W} &\quad\mbox{strongly in }C^0([0,T];U_0).
\end{align*}

The remainder of the proof is very similar to that one of Section \ref{sec.conv1},
using slightly weaker convergence results.
The most difficult part is the convergence of the nonlinear term 
$\na(\widetilde{u}_i^\eps\widetilde{u}_j^\eps)$,
since the previous convergences do not allow us to perform the limit
$\widetilde{u}_i^\eps\na\widetilde{u}_j^\eps$ because of $\rho_1<2$.
The idea is to consider the ``very weak'' formulation by performing the limit in
$\widetilde{u}_i^\eps\widetilde{u}_j^\eps\Delta\phi$ instead of
$\na(\widetilde{u}_i^\eps\widetilde{u}_j^\eps)\cdot\na\phi$ for suitable test
functions $\phi$. Indeed, let $\phi\in L^\infty(0,T;C_0^\infty(\dom))$.
Since $\widetilde{u}_i^\eps\to\widetilde{u}$ strongly in
$L^2(0,T;L^2(\dom))$ $\widetilde{\Prob}$-a.s., we have
\begin{align*}
  \int_0^T\int_\dom \na(\widetilde{u}_i^j\widetilde{u}_j^\eps)\cdot\na\phi \dd x\dd t
  = -\int_0^T\int_\dom\widetilde{u}_i^\eps\widetilde{u}_j^\eps\Delta\phi\dd x\dd t
	\to -\int_0^T\int_\dom\widetilde{u}_i\widetilde{u}_j\Delta\phi\dd x\dd t.
\end{align*}
It follows from the equivalence of the laws that
$$
  \widetilde{\E}\bigg(\int_0^T\int_\dom\widetilde{u}_i^\eps
	\widetilde{u}_j^\eps\Delta\phi\dd x\dd t\bigg)^2 \le C,
$$
and we conclude from Vitali's theorem that
$$
  \widetilde{E}\bigg|\int_0^T\int_\dom\big(\widetilde{u}_i^\eps
	\widetilde{u}_j^\eps - \widetilde{u}_i\widetilde{u}_j\big)(t)\Delta\phi
	\dd x\dd t\bigg|\to 0\quad\mbox{as }\eps\to 0.
$$
By density, this convergence holds for all test functions $\phi\in
L^\infty(0,T;W^{2,\infty}(\dom))$ such that $\na\phi\cdot\nu=0$ on $\pa\dom$.
This ends the proof of Theorem \ref{thm.skt2}.

\begin{remark}[Three space dimensions]\rm
The three-dimensional case is delicate since $u_i^\eps$ lies in a space larger than
$L^2(Q_T)$. We may exploit the regularity \eqref{6.uuW11} for
$\na(u_i^\eps u_j^\eps)$, but this leads only
to the existence of random variables $\widetilde{\eta}_{ij}^\eps$ and
$\widetilde{\eta}_{ij}$ with $i,j=1,\ldots,n$ and $i\neq j$
on the space $X_T=L_w^{\rho_2}(0,T;L^{\rho_2}(\dom))$
such that $\widetilde{\eta}_{ij}^\eps$ and $u_i^\eps u_j^\eps$ have the same law
on $\mathcal{B}(X_T)$ and, as $\eps\to 0$,
$$
  \widetilde{\eta}_{ij}^\eps\rightharpoonup \widetilde{\eta}_{ij}
	\quad\mbox{weakly in }X_T.
$$
Similar arguments as before lead to the limit
$$
  \widetilde{\E}\bigg|\int_0^T\int_\dom\na(\widetilde{\eta}_{ij}^\eps
	- \widetilde{\eta}_{ij})(t)\cdot\na\phi(t)\dd x\dd t\bigg|\to 0,
$$
but we cannot easily identify $\widetilde{\eta}_{ij}$ with
$\widetilde{u}_i\widetilde{u}_j$. 
\qed
\end{remark}


\section{Discussion of the noise terms}\label{sec.noise}

We present some examples of admissible terms $\sigma(u)$. Recall that
$(e_k)_{k\in\N}$ is an orthonormal basis of $U$.

\begin{lemma}\label{lem.sigma}
The stochastic diffusion
$$
  \sigma_{ij}(u) = \delta_{ij}s(u_i)\sum_{\ell=1}^\infty
	a_\ell(e_\ell,\cdot)_{U}, \quad 
	s(u_i) = \frac{u_i}{1+u_i^{1/2+\eta}}
$$
satisfies Assumption (A5) for $\eta>0$ and $(a_\ell)\in\ell^2(\R)$.
\end{lemma}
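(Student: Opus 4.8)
The plan is to compute $\sigma(u)e_k$ explicitly and thereby reduce Assumption~(A5) to two elementary pointwise inequalities for the scalar function $s$. Set $\bar a:=\sum_{\ell=1}^\infty a_\ell^2=\|(a_\ell)\|_{\ell^2(\R)}^2<\infty$. Since $(e_\ell)$ is orthonormal in $U$, we have $\sigma_{ij}(u)e_k=\delta_{ij}s(u_i)a_k$, so that $\sigma(u)e_k$ is the diagonal matrix with diagonal entries $s(u_1)a_k,\dots,s(u_n)a_k$. Writing $h_i(t):=\pi_i\big(t(\log t-1)+1\big)\ge 0$, we have $h(u)=\sum_{i=1}^n h_i(u_i)$, hence $h_i(u_i)\le h(u)$, while $\pa h/\pa u_i=\pi_i\log u_i$ and $h''(u)$ is diagonal with entries $\pi_i/u_i$. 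Throughout, $u$ is assumed nonnegative (as required for $h(u)$ to be defined), and the a priori singular products $s(t)\log t$ and $s(t)^2/t$ are read as their continuous extensions to $t=0$, both vanishing there.

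First I would establish the pointwise bounds: there is $C_\eta>0$ such that for all $t\ge 0$ and all $i$,
\[
  s(t)^2(\log t)^2\le C_\eta\big(1+h_i(t)\big),\qquad
  \frac{s(t)^2}{t}\le C_\eta .
\]
On any compact subset of $[0,\infty)$ both follow from continuity, using $t^2(\log t)^2\to 0$ and $t/(1+t^{1/2+\eta})^2\to 0$ as $t\to 0^+$. For large $t$ one uses $s(t)\le t^{1/2-\eta}$, so $s(t)^2/t\le t^{-2\eta}\to 0$, and $s(t)^2(\log t)^2\le t^{1-2\eta}(\log t)^2$, which is dominated by $h_i(t)\ge \tfrac12\pi_i t\log t$ because $t^{-2\eta}\log t\to 0$. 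This is the only place where the hypothesis $\eta>0$ is used, and, although elementary, it is the main point to check.

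Granting these bounds, the first inequality of~(A5) follows by Cauchy--Schwarz on $\dom$ together with $\sum_k a_k^2=\bar a$:
\[
  \sum_{k=1}^\infty\sum_{i,j=1}^n\Big(\int_\dom\frac{\pa h}{\pa u_i}(u)\,\sigma_{ij}(u)e_k\,\dd x\Big)^2
  =\bar a\sum_{i=1}^n\pi_i^2\Big(\int_\dom s(u_i)\log u_i\,\dd x\Big)^2
  \le \bar a\,|\dom|\sum_{i=1}^n\pi_i^2\int_\dom s(u_i)^2(\log u_i)^2\,\dd x,
\]
which by the pointwise bound and $h_i\le h$ is at most $C\big(1+\int_\dom h(u)\,\dd x\big)$. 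Integrating over $s\in(0,t)$ bounds this by $C\big(t+\int_0^t\int_\dom h(u)\,\dd x\,\dd s\big)$; taking the square root and using $\sqrt{a+b}\le\sqrt a+\sqrt b$ and $\sqrt c\le 1+c$ yields the claim, with $C_h$ allowed to depend on $T$ and $\dom$. For the second inequality of~(A5), since $\sigma(u)e_k$ and $h''(u)$ are both diagonal,
\[
  \operatorname{tr}\big[(\sigma(u)e_k)^{T}\,h''(u)\,\sigma(u)e_k\big]
  =a_k^2\sum_{i=1}^n\pi_i\,\frac{s(u_i)^2}{u_i},
\]
so summing over $k$ and invoking $s(t)^2/t\le C_\eta$ gives the pointwise bound $\bar a\,C_\eta\sum_{i=1}^n\pi_i$; integrating over $\dom\times(0,t)$ produces a constant, which is trivially dominated by $C_h\big(1+\int_0^t\int_\dom h(u)\,\dd x\,\dd s\big)$. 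All interchanges of $\sum_k$ with integration are legitimate by Tonelli's theorem since the integrands are nonnegative. This completes the verification of Assumption~(A5).
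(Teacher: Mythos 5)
Your proof is correct and follows essentially the same route as the paper. The only cosmetic difference is that you package the elementary estimates as two pointwise bounds, $s(t)^2(\log t)^2\le C(1+h_i(t))$ and $s(t)^2/t\le C$, whereas the paper instead uses $|u\log u|\le C(1+u^{1+\eta})$ and then bounds the squared quotient $\bigl(\tfrac{1+u^{1+\eta}}{1+u^{1/2+\eta}}\bigr)^2\le C(1+u)$ before invoking $|u|\le C(1+h(u))$; these are the same estimates rearranged, and the Jensen/Cauchy--Schwarz step on $\dom$ and the final square-root manipulation are identical.
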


\begin{proof}
With the entropy density $h$ given by \eqref{1.h}, 
we compute $(\pa h/\pa u_i)(u)=\pi_i\log u_i$ and 
$(\pa^2 h/\pa u_i\pa u_j)(u)=(\pi_i/u_i)\delta_{ij}$. Therefore,
by Jensen's inequality and the elementary inequalities
$|u_i\log u_i|\le C(1+u_i^{1+\eta})$ for any $\eta>0$ and $|u|\le C(1+h(u))$,
\begin{align*}
  J_1 &:= \bigg\{\int_0^T\sum_{k=1}^\infty\sum_{i,j=1}^n
	\bigg(\int_\dom \frac{\pa h}{\pa u_i}(u)\sigma_{ij}(u)e_k\dd x\bigg)^2\dd s
	\bigg\}^{1/2} \\
	&= \bigg\{\sum_{k=1}^\infty a_k^2\int_0^T\sum_{i=1}^n\bigg(\int_\dom
	\pi_i\frac{u_i\log u_i}{1+u_i^{1/2+\eta}}\dd x\bigg)^2\dd s
	\bigg\}^{1/2} \\
	&\le C\bigg\{\sum_{i=1}^n\int_0^T\bigg(\int_\dom
	\frac{1+u_i^{1+\eta}}{1+u_i^{1/2+\eta}}\dd x\bigg)^2\dd s\bigg\}^{1/2} \\
	&\le C\bigg\{\sum_{i=1}^n\int_0^T\int_\dom
	\bigg(\frac{1+u_i^{1+\eta}}{1+u_i^{1/2+\eta}}\bigg)^2\dd x\dd s\bigg\}^{1/2} \\
	&\le C\bigg\{\sum_{i=1}^n\int_0^T\int_\dom(1+u_i)\dd x\dd s\bigg\}^{1/2}
	\le C\bigg(1+\int_0^T\int_\dom h(u)\dd x\dd s\bigg).
\end{align*}
The second condition in Assumption (A5) becomes
\begin{align*}
	J_2 &:= \int_0^T\sum_{k=1}^\infty\int_\dom\operatorname{tr}\big[(\sigma(u)e_k)^T
	h''(u)\sigma(u)e_k\big]\dd x\dd s \\
	&= \sum_{k=1}^\infty a_k^2\sum_{i=1}^n\int_0^T\int_\dom
	\frac{\pi_iu_i}{(1+u_i^{1/2+\eta})^2}\dd x\dd s \le C(\dom,T).
\end{align*}
Thus, Assumption (A5) is satisfied.
\end{proof}

The proof shows that $J_1$ can be estimated if $s(u_i)^2\log(u_i)^2$ is bounded
from above by $C(1+h(u))$. This is the case if $s(u_i)$ behaves like 
$u_i^\alpha$ with $\alpha<1/2$. Furthermore, $J_2$ can be estimated if
$s(u_i)^2/u_i$ is bounded, which is possible if $s(u_i)=u_i^\alpha$ with
$\alpha\ge 1/2$. Thus, to both satisfy the growth restriction and avoid the singularity
at $u_i=0$, we have chosen $\sigma_{ij}$ as in Lemma \ref{lem.sigma}.
This example is rather artificial. To include more general
choices, we generalize our approach. In fact, it is sufficient
to estimate the integrals in inequality \eqref{4.Hest} in such a way that the entropy 
inequality of Proposition \ref{prop.ent} holds. The idea is to exploit the
gradient bound for $u_i$ for the estimatation of $J_1$ and $J_2$. 

Consider a trace-class, positive, and symmetric operator $Q$ on $L^2(\dom)$
and the space $U=Q^{1/2}(L^2(\dom))$, equipped with the norm 
$\|Q^{1/2}(\cdot)\|_{L^2(\dom)}$. We will work in the following with an
$U$-cylindrical Wiener process $W^Q$.
This setting is equivalent to a spatially colored noise
on $L^2(\dom)$ in the form of a $Q$-Wiener process (with $Q\neq\mathrm{Id}$).
The latter viewpoint provides, in our opinion, a more intuitive insight.
In particular, the operator $Q$ is constructed from the eigenfunctions and 
eigenvalues described below. 

Let $(\eta_k)_{k\in\N}$ be a basis of
$L^2(\dom)$, consisting of the normalized eigenfunctions of the Laplacian subject to
Neumann boundary conditions with eigenvalues $\lambda_k\ge 0$, and set
$a_k=(1+\lambda_k)^{-\rho}$ for some $\rho>0$ such that
$\sum_{k=1}^\infty a_k^2\|\eta_k\|_{L^\infty(\dom)}^2<\infty$.
Since $\lambda_k\le Ck^{2/d}$ \cite[Corollary 2]{Kro92} and 
$\|\eta_k\|_{L^\infty(\dom)}\le Ck^{(d-1)/2}$ \cite[Theorem 1]{Gri02},
we may choose $\rho>(d/2)^2$. 
Considering a sequence of independent Brownian motions $(W_1^k,\ldots,W_n^k)_{k\in\N}$,
we assume the noise to be of the form $W^Q=(W_1^Q,\ldots,W_n^Q)$, where
$$
  W_j^Q(t) = \sum_{k=1}^\infty a_k e_k W_j^k(t), \quad j=1,\ldots,n,\ t>0,
$$
and $(e_k)_{k\in\N}=(a_k\eta_k)_{k\in\N}$ is a basis of $U=Q^{1/2}(L^2(\dom))$.

\begin{lemma}\label{lem.sigma2}
For the SKT model with self-diffusion, let
$\sigma_{ij}(u)=\delta_{ij}u_i^\alpha$ for $1/2\le\alpha\le 1$,
$i,j=1,\ldots,n$, interpreted as a map from $L^2(\dom)$ to
$\L_2(H^\beta(\dom);L^2(\dom))$, where $\beta>\rho$. Then the entropy inequality
\eqref{5.ei} holds, i.e., $\sigma_{ij}$ is admissible for Theorem \ref{thm.skt}.
\end{lemma}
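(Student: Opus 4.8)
The plan is to re-run the It\^o computation behind Proposition~\ref{prop.ent}: Assumption~(A5) enters that proof only through the two bounds on the terms $I_1$ (the stochastic term, after Burkholder--Davis--Gundy) and $I_2$ (the It\^o correction) on the right-hand side of \eqref{4.Hest}, so it suffices to replace those two bounds by direct estimates valid for $\sigma_{ij}(u)=\delta_{ij}u_i^\alpha$ and the coloured noise $W^Q$ with $e_k=a_k\eta_k$, $a_k=(1+\lambda_k)^{-\rho}$. First one checks that the approximate scheme of Theorem~\ref{thm.approx} is still solvable: on $\{\|v\|_{D(L)'}\le R\}$ the function $u(R_\eps(v))=\exp(R_\eps(v))$ is bounded in $L^\infty(\dom)$ \emph{and} bounded away from zero (uniformly in $v$, for fixed $\eps,R$), because $R_\eps$ maps $D(L)'$-balls into $D(L)$-balls and $D(L)\hookrightarrow L^\infty(\dom)$; on a set where $u\ge c>0$ the map $s\mapsto s^\alpha$ is Lipschitz, so $v\mapsto\sigma(u(R_\eps(v)))$ is locally Lipschitz from $D(L)'$ into $\mathcal{L}_2(U;D(L)')$, the regularity $\beta>\rho$ (together with $\sum_k a_k^2\|\eta_k\|_{L^\infty(\dom)}^2<\infty$) ensuring $\sigma(u)\in\mathcal{L}_2(H^\beta(\dom);L^2(\dom))\subset\mathcal{L}_2(U;L^2(\dom))$ with the linear growth required by (A4). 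Hence Theorem~\ref{thm.approx} and the subsequent global-existence argument go through verbatim, and it remains to establish the $\eps$-uniform entropy estimate.

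For $I_2$ we use $h''(u)=\operatorname{diag}(\pi_i/u_i)$ and $\sigma(u)e_k=a_k\eta_k\operatorname{diag}(u_i^\alpha)$ to get
\[
  \sum_{k}\int_\dom\operatorname{tr}\bigl[(\sigma(u^\eps)e_k)^T h''(u^\eps)\,\sigma(u^\eps)e_k\bigr]\dd x
  =\int_\dom\Bigl(\sum_k a_k^2\eta_k^2\Bigr)\sum_{i=1}^n\pi_i (u_i^\eps)^{2\alpha-1}\dd x .
\]
Since $\sum_k a_k^2\eta_k(x)^2\le\sum_k a_k^2\|\eta_k\|_{L^\infty(\dom)}^2<\infty$ and $2\alpha-1\in[0,1]$ gives $(u_i^\eps)^{2\alpha-1}\le 1+u_i^\eps\le C(1+h(u^\eps))$ by Hypothesis~(H1), we obtain $I_2\le C\bigl(1+\E\int_0^{T_R}\!\int_\dom h(u^\eps)\dd x\,\dd s\bigr)$.

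For $I_1$ the decisive quantity is $\sum_k a_k^2\bigl(\int_\dom g_i^\eps\eta_k\dd x\bigr)^2$ with $g_i^\eps:=(\log u_i^\eps)(u_i^\eps)^\alpha$. Writing $\bigl(\int_\dom g_i^\eps\eta_k\bigr)^2=(g_i^\eps,\eta_k)_{L^2(\dom)}^2$, the spectral definition of the negative Sobolev norm gives $\sum_k a_k^2(g_i^\eps,\eta_k)_{L^2(\dom)}^2=\|g_i^\eps\|_{H^{-2\rho}(\dom)}^2$, and since $\rho>(d/2)^2$ implies $2\rho>d/2$, the embedding $H^{2\rho}(\dom)\hookrightarrow L^\infty(\dom)$ yields $L^1(\dom)\hookrightarrow H^{-2\rho}(\dom)$, whence $\sum_k a_k^2(g_i^\eps,\eta_k)^2\le C\|g_i^\eps\|_{L^1(\dom)}^2$. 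The elementary pointwise inequality $|\log s|\,s^\alpha\le C\bigl(1+s(\log s-1)+1\bigr)$ for all $s>0$ and $\tfrac12\le\alpha\le1$ (verified as $s\to0$, as $s\to\infty$ using $\alpha\le1$, and near $s=1$ using the additive constant) then gives $\|g_i^\eps\|_{L^1(\dom)}\le C\bigl(1+\int_\dom h(u^\eps)\dd x\bigr)$. Feeding this into Burkholder--Davis--Gundy and using $\int_0^t(\int_\dom h)^2\dd s\le\bigl(\sup_{s<t}\int_\dom h\bigr)\int_0^t\!\int_\dom h\,\dd s$ together with Young's inequality, we get for every $\delta>0$
\[
  I_1\le C+\delta\,\E\sup_{0<t<T_R}\int_\dom h(u^\eps(t))\dd x+C_\delta\,\E\int_0^{T_R}\!\int_\dom h(u^\eps)\dd x\,\dd s .
\]
(Alternatively, since $a_{ii}>0$ one may instead bound $\sum_k a_k^2(g_i^\eps,\eta_k)^2\le C\|g_i^\eps\|_{L^2(\dom)}^2\le C\bigl(1+\int_\dom(u_i^\eps)^{2+\eta}\dd x\bigr)$ and absorb the resulting $\|\na u_i^\eps\|_{L^2(\dom)}^2$ into the dissipation term of \eqref{5.ei} via the Gagliardo--Nirenberg inequality; this is the ``gradient'' route, but the $L^1$ route above is cleaner and exploits the colour of the noise directly.)

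Inserting the two bounds into \eqref{4.Hest}, choosing $\delta<1$ so that $\delta\,\E\sup_t\int_\dom h(u^\eps)$ is absorbed into the left-hand side, and applying Gronwall's lemma to $t\mapsto\E\sup_{0<s<t\wedge\tau_R}\int_\dom h(u^\eps(s))\dd x$ (using $\E\int_\dom h(u^0)\dd x\le C$, which follows from (A2)) yields an $\eps$-uniform bound on this quantity; reinserting it in \eqref{4.Hest} controls $\E\sup_t\int_0^t\!\int_\dom\na w^\eps\!:\!B(w^\eps)\na w^\eps\,\dd x\,\dd s$, and \eqref{5.RBR} (available because $a_{ii}>0$) turns this into the full stochastic entropy inequality \eqref{5.ei}. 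The main obstacle is the $I_1$ estimate: for $\alpha$ near $1$ the choice $\sigma_{ij}(u)=\delta_{ij}u_i^\alpha$ does \emph{not} satisfy (A5) verbatim --- the pointwise bound on $|\log u_i|\,u_i^\alpha$ only produces $C(1+h)$ rather than a quantity keeping the right-hand side of (A5) linear --- so one must simultaneously exploit the spectral decay $a_k=(1+\lambda_k)^{-\rho}$ (equivalently, the colour of the noise) and perform the $\sup$/Young manipulation before invoking Gronwall; the subsidiary point of upgrading the merely H\"older map $s\mapsto s^\alpha$ to a locally Lipschitz nonlinearity in the approximate scheme is handled by the strict positivity of $u^\eps$.
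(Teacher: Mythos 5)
Your argument is correct, but it treats the crucial stochastic term $I_1$ (the paper's $J_3$) by a genuinely different and cleaner device. The paper estimates $(\int_\dom g\,\eta_k\,\dd x)^2 \le \|g\|_{L^2(\dom)}^2\|\eta_k\|_{L^2(\dom)}^2 = \|g\|_{L^2(\dom)}^2$, which only uses $\sum_k a_k^2<\infty$ and reduces $J_3$ to the $L^2(Q_T)$ norm of $(u_i^\eps)^\alpha\log u_i^\eps$; to control that $L^2$ norm the paper then invokes the technical Lemma~\ref{lem.vlogv} (a Gagliardo--Nirenberg absorption into the gradient dissipation furnished by $a_{ii}>0$). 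You instead estimate $(\int_\dom g\,\eta_k\,\dd x)^2\le\|g\|_{L^1(\dom)}^2\|\eta_k\|_{L^\infty(\dom)}^2$, exploiting the stronger summability $\sum_k a_k^2\|\eta_k\|_{L^\infty(\dom)}^2<\infty$ already encoded in the choice of $(a_k)$ — the same device the paper uses for $J_4$ — so that only the $L^1$ norm of $(u_i^\eps)^\alpha\log u_i^\eps$ is needed; a pointwise inequality controls this directly by the entropy, and the $\sup$/Young manipulation prepares Gronwall. This bypasses Lemma~\ref{lem.vlogv} altogether and does not even touch the dissipation at that stage. Your detour through the spectral Sobolev scale, $\sum_k a_k^2(g,\eta_k)_{L^2}^2=\|g\|_{H^{-2\rho}}^2\le C\|g\|_{L^1}^2$, is correct given $2\rho>d/2$ and the elliptic regularity of the Neumann Laplacian, but the elementary Cauchy--Schwarz bound above makes it unnecessary. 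Finally, your side remark that $s\mapsto s^\alpha$ becomes locally Lipschitz on the approximation level (because $u^\eps=\exp(R_\eps(v^\eps)/\pi_i)$ is bounded away from zero on $D(L)'$-balls) usefully addresses a solvability point the paper leaves implicit; note, however, that the lemma as stated only asserts the entropy inequality \eqref{5.ei}, and the passage to the limit $\eps\to0$ under merely H\"older-continuous $\sigma$ (needed to justify full ``admissibility'' when $\alpha<1$) would still require additional argument in both your version and the paper's.
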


\begin{proof}
We can write inequality \eqref{4.Hest} for $0<T<T_R$ as
\begin{align}\label{7.ei}
  \E&\sup_{0<t<T}\int_\dom h(u^\eps(t))\dd x 
	+ \frac{\eps}{2}\E\sup_{0<t<T}\|Lw^\eps(t)\|_{L^2(\dom)}^2 \\
	&\phantom{xx}{}
	+ \E\sup_{0<t<T}\int_0^t\int_\dom \na w^\eps(s):B(w^\eps)\na w^\eps(s)
  \dd x\dd s - \E\int_\dom h(u^0)\dd x \nonumber \\
	&\le\E\sup_{0<t<T}\bigg\{\int_0^t\sum_{k=1}^\infty\sum_{i,j=1}^n\bigg(
	\int_\dom\pi_i\log u_i^\eps(s)\sigma_{ij}(u^\eps(s))e_k\dd x\bigg)^2
	\dd s\bigg\}^{1/2} \nonumber \\
	&\phantom{xx}{}+ \frac12\E\sup_{0<t<T}\sum_{k=1}^\infty\sum_{i=1}^n
	\int_0^t\int_\dom(\sigma_{ii}(u^\eps)e_k\frac{\pi_i}{u_i^\eps}
	\sigma_{ii}(u^\eps)e_k\dd x\dd s \nonumber \\
	&=: J_3 + J_4, \nonumber 
\end{align}
recalling that $w^\eps=R_\eps(v^\eps)$ and $u^\eps=u(w^\eps)$. 
We simplify $J_3$ and $J_4$, using the definition $e_k=a_k\eta_k$:
\begin{align*}
  J_3 &= \E\sup_{0<t<T}\bigg\{\sum_{k=1}^\infty a_k^2\int_0^t\sum_{i=1}^n\pi_i^2\bigg(
	\int_\dom u_i^\eps(s)^\alpha\log u_i^\eps(s)\eta_k\dd x\bigg)^2\dd s\bigg\}^{1/2} \\
	&\le C\E\sup_{0<t<T}\bigg\{\sum_{k=1}^\infty a_k^2\int_\dom \eta_k^2\dd x
	\int_0^t\sum_{i=1}^n\int_\dom(u_i^\eps(s)^\alpha\log u_i^\eps(s))^2
	\dd x\dd s\bigg\}^{1/2} \\
	&\le C\sum_{i=1}^n\E\|(u_i^\eps)^\alpha\log u_i^\eps\|_{L^2(0,T;L^2(\dom))}, \\
	J_4 &= \sum_{k=1}^\infty a_k^2\E\sup_{0<t<T}\sum_{i=1}^n\pi_i
	\int_0^{t}\int_\dom (u_i^\eps)^{2\alpha}(u_i^\eps)^{-1}\eta_k^2\dd x\dd s \\
	&\le C\sum_{k=1}^\infty a_k^2\|\eta_k\|_{L^\infty(\dom)}^2\sum_{i=1}^n
	\E\|(u_i^\eps)^{2\alpha-1}\|_{L^1(0,T;L^1(\dom))} \\
	&\le C\sum_{i=1}^n\E\|(u_i^\eps)^{2\alpha-1}\|_{L^1(0,T;L^1(\dom))}.
\end{align*}
The last inequality follows from our assumption on $(a_k)$. 
By \eqref{5.RBR}, we can estimate the integrand of the third integral on the 
left-hand side of \eqref{7.ei} according to
$$
  \na w^\eps:B(w^\eps)\na w^\eps \\
 	\ge 2\sum_{i=1}^n\pi_ia_{ii}|\na u^\eps|^2.
$$
Hence, because of $|u|\le C(1+h(u))$, we can formulate \eqref{7.ei} as
\begin{align*}
	\E&\sup_{0<t<T}\|h(u^\eps(t))\|_{L^1(\dom)}
	+ \frac{\eps}{2}\E\sup_{0<t<T}\|Lw^\eps(t)\|_{L^2(\dom)}^2
	+ C\E\|\na u^\eps(s)\|_{L^2(0,T;L^2(\dom))}^2 \\
	&\le C + C\sum_{i=1}^n\E\|(u_i^\eps)^{\alpha}\log u_i^\eps\|_{L^2(0,T;L^2(\dom))}
	+ C\sum_{i=1}^n\E\|(u_i^\eps)^{2\alpha-1}\|_{L^1(0,T;L^1(\dom))}. 
\end{align*}
It is sufficient to continue with the case $\alpha=1$, since the proof for
$\alpha<1$ follows from the case $\alpha=1$. Then, using $|u^\eps|\le C(1+h(u^\eps))$,
\begin{align}\label{7.ei3}
  \E&\|h(u^\eps)\|_{L^\infty(0,T;L^1(\dom))} + \E\|u^\eps\|_{L^\infty(0,T;L^1(\dom))}\\
	&\phantom{xx}{}+ \eps\E\|Lw^\eps\|_{L^\infty(0,T;L^2(\dom))}^2 
	+ \E\|\na u^\eps\|_{L^2(0,T;L^2(\dom))}^2 \nonumber \\
	&\le C + C\sum_{i=1}^n\E\|u_i^\eps\log u_i^\eps\|_{L^2(0,T;L^2(\dom))}
	+ C\E\|u^\eps\|_{L^1(0,T;L^1(\dom))}. \nonumber
\end{align}

Now, we use the following lemma which is proved in Appendix \ref{sec.proofs}.

\begin{lemma}\label{lem.vlogv}
Let $d\ge 2$ and let $v\in L^2(0,T;H^1(\dom))$ satisfy 
$v\log v\in L^\infty(0,T;L^1(\dom))$.
Then for any $\delta>0$, there exists $C(\delta)>0$ such that
\begin{align*}
  \|v\log v\|_{L^2(0,T;L^2(\dom))}
	&\le \delta\big(\|v\log v\|_{L^1(0,T;L^1(\dom))}
	+ \|v\|_{L^\infty(0,T;L^1(\dom))} + \|\na v\|_{L^2(0,T;L^2(\dom))}^2\big) \\
	&\phantom{xx}{}+ C(\delta)\|v\|_{L^1(0,T;L^1(\dom))}.
\end{align*}
\end{lemma}

It follows from \eqref{7.ei3} that, for any $\delta>0$,
\begin{align*}
  \E&\|h(u^\eps)\|_{L^\infty(0,T;L^1(\dom))} + \E\|u^\eps\|_{L^\infty(0,T;L^1(\dom))}\\
	&\phantom{xx}{}+ \eps\E\|Lw^\eps\|_{L^\infty(0,T;L^2(\dom))}^2 
	+ \E\|\na u^\eps\|_{L^2(0,T;L^2(\dom))}^2 \\
	&\le C + C(\delta)\E\|u^\eps\|_{L^1(0,T;L^1(\dom))}
	+ \delta C\sum_{i=1}^n\E\|u_i^\eps\log u_i^\eps\|_{L^1(0,T;L^1(\dom))} \\
	&\phantom{xx}{}+ \delta C\big(\E\|u^\eps\|_{L^\infty(0,T;L^1(\dom))} 
	+ \E\|\na u^\eps\|_{L^2(0,T;L^2(\dom))}^2\big).
\end{align*}
For sufficiently small $\delta>0$, the last terms on the right-hand side
can be absorbed by the corresponding terms on the left-hand side, leading to
\begin{align*}
   \E&\|h(u^\eps)\|_{L^\infty(0,T;L^1(\dom))} + \E\|u^\eps\|_{L^\infty(0,T;L^1(\dom))}\\
	&\phantom{xx}{}+ \eps\E\|Lw^\eps\|_{L^\infty(0,T;L^2(\dom))}^2 
	+ \E\|\na u^\eps\|_{L^2(0,T;L^2(\dom))}^2 \\
	&\le C + C\int_0^T\|u^\eps\|_{L^\infty(0,t;L^1(\dom))}\dd t
	\quad\mbox{for all }T>0.
\end{align*}
Gronwall's lemma ends the proof.
\end{proof}

In the case without self-diffusion, we have an $H^1(\dom)$ estimate for
$(u_i^\eps)^{1/2}$ only, and it can be seen that
stochastic diffusion terms of the type 
$\delta_{ij}u_i^\alpha$ for $\alpha>1/2$ are not admissible.
However, we may choose $\sigma_{ij}(u)e_k 
= \delta_{ij}u_i^\alpha(1+(u_i^\eps)^\beta)^{-1}a_k\eta_k$ 
for $1/2\le\alpha<1$ and $\beta\ge\alpha/2$.


\begin{appendix}

\section{Proofs of some lemmas}\label{sec.proofs}

\subsection{Proof of Lemma \ref{lem.ab}}

The operator equation $\DD R_\eps[w](a)=b$ can be written as
$a = \DD Q_\eps[w](b) = u'(w)b + \eps L^*Lb$. Hence,
\begin{equation}\label{a.aux}
  \int_\dom a:b\dd x = \int_\dom u'(w)b:b \dd x
	+ \eps\int_\dom Lb:Lb\dd x.
\end{equation}
The matrix $u'(w)=(h'')^{-1}(u(w))$ is symmetric and positive semidefinite
(since $h$ is convex).
Thus, the square root operator $\sqrt{u'(w)}$ exists and is symmetric.
This shows that
\begin{align*}
  u'(w)b:b &= \sqrt{u'(w)}\sqrt{u'(w)}b:b 
	= \operatorname{tr}\big[\big(\sqrt{u'(w)}\sqrt{u'(w)}b\big)^Tb\big] \\
	&= \operatorname{tr}\big[\big(\sqrt{u'(w)}b\big)^T\big(\sqrt{u'(w)}b\big)\big] 
	= \big\|\sqrt{u'(w)}b\big\|_F^2.
\end{align*}
Furthermore, by the Cauchy--Schwarz inequality $\operatorname{tr}[A^TB]
\le \operatorname{tr}[A^TA]\operatorname{tr}[B^TB]$ and the property
$\operatorname{tr}[AB]=\operatorname{tr}[BA]$ for matrices $A$ and $B$,
\begin{align*}
  a:b &= \operatorname{tr}\big[a^T\sqrt{u'(w)}{\,}^{-1}	\sqrt{u'(w)}b\big] \\
	&\le \operatorname{tr}\big[\big(\sqrt{u'(w)}b\big)^T\sqrt{u'(w)}b\big]^{1/2}
	\operatorname{tr}\big[\big(a^T\sqrt{u'(w)}{\,}^{-1}\big)^Ta^T\sqrt{u'(w)}{\,}^{-1}
	\big]^{1/2} \\
	&\le \frac12\operatorname{tr}\big[\big(\sqrt{u'(w)}b\big)^T\sqrt{u'(w)}b\big]
	+ \frac12\operatorname{tr}\big[\big(\sqrt{u'(w)}{\,}^{-1}a\big)
	\big(a^T\sqrt{u'(w)}{\,}^{-1}\big)\big] \\
	&= \frac12\big\|\sqrt{u'(w)}b\big\|_F^2
	+ \frac12\operatorname{tr}\big[\big(a^T\sqrt{u'(w)}{\,}^{-1}\big)
	\big(\sqrt{u'(w)}{\,}^{-1}a\big)\big] \\
	&= \frac12\big\|\sqrt{u'(w)}b\big\|_F^2
	+ \frac12\operatorname{tr}[a^Tu'(w)^{-1}a].
\end{align*}
Inserting these relations into \eqref{a.aux} leads to
\begin{align}\label{a.aux2}
  \int_\dom&\|\sqrt{u'(w)}b\|_F^2\dd x + \eps\int_\dom Lb:Lb\dd x
  = \int_\dom a:b\dd x \\
	&\le \frac12\int_\dom\|\sqrt{u'(w)}b\|_F^2\dd x
	+ \frac12\int_\dom\operatorname{tr}[a^Tu'(w)^{-1}a]\dd x \nonumber
\end{align}
and consequently,
$$
  \int_\dom\|\sqrt{u'(w)}b\|_F^2\dd x
	\le \int_\dom\operatorname{tr}[a^Tu'(w)^{-1}a]\dd x.
$$
Together with \eqref{a.aux2} we obtain the statement.


\subsection{Proof of Lemma \ref{lem.v0}}
 
It follows from the convexity of $h$ that
$$
  h(v^0) \ge h(u(R_\eps(v^0))) + \big(v^0-u(R_\eps(v^0))\big)\cdot h'(u(R_\eps(v^0))).
$$
Since $R_\eps$ and $Q_\eps$ are inverse to each other, 
we can replace $v^0$ by $Q_\eps(R_\eps(v^0)) = u(R_\eps(v^0)) + \eps L^*LR_\eps(v^0)$:
\begin{align*}
  h(v^0) &\ge h(u(R_\eps(v^0))) + \big\langle 
	u(R_\eps(v^0)) + \eps L^*LR_\eps(v^0) - u(R_\eps(v^0)),h'(u(R_\eps(v^0)))
	\big\rangle_{D(L)',D(L)} \\
	&= h(u(R_\eps(v^0))) + \eps\langle L^*LR_\eps(v^0),R_\eps(v^0)\rangle_{D(L)',D(L)}.
\end{align*}
We find after an integration that
$$
  \int_\dom h(v^0)\dd x
	\ge \int_\dom h(u(R_\eps(v^0)))\dd x 
	+ \eps\int_\dom LR_\eps(v^0)\cdot LR_\eps(v^0)\dd x,
$$
which yields the statement.


\subsection{Proof of Lemma \ref{lem.g}}

We show that
$$
  I:= \int_0^T\int_0^T|t-s|^{-\delta}\int_{s\wedge t}^{t\lor s}g(r)\dd r\dd t\dd s 
	< \infty.
$$
A change of the integration domain and an integration by parts lead to
\begin{align}
  I &= 2\int_0^T\int_s^T(t-s)^{-\delta}\bigg(\int_s^t g(r)dr\bigg)\dd t\dd s 
	\label{3.I} \\
  &= -\frac{2}{1-\delta}\int_0^T\int_s^T(t-s)^{1-\delta}g(t)\dd t\dd s
	+ \frac{2}{1-\delta}\int_0^T(T-s)^{1-\delta}\int_s^t g(r)\dd r\dd s, \nonumber
\end{align}
observing that $\lim_{t\to s}(t-s)^{1-\delta}\int_s^t g(r)\dd r=0$ 
for $1-\delta>-1$, since the integrability of $g$ implies that 
$\lim_{t\to s}(t-s)^{-1}\int_s^t g(r)\dd r=g(s)$ for a.e.\ $s$. The result follows
as the integrals on the right-hand side of \eqref{3.I} are finite.


\subsection{Proof of Lemma \ref{lem.vlogv}}

We use the interpolation inequality with $1/2=\theta_1 + (1-\theta_1)/(2p)$ and 
some $1<p<d/(d-2)$ (and $p>1$ if $d=2$)
as well as the Young inequality with $\delta>0$:
\begin{align}\label{aux.eq}
  \|v&\log v\|_{L^2(0,T;L^2(\dom))}
	\le \bigg(\int_0^{T}\|v\log v\|_{L^1(\dom)}^{2\theta_1}
	\|v\log v\|_{L^{2p}(\dom)}^{2(1-\theta_1)}\dd t\bigg)^{1/2} \\
	&\le \bigg(C(\delta)^2\int_0^{T}\|v\log v\|_{L^1(\dom)}^2\dd t
	+ \delta^2\int_0^T\|v\log v\|_{L^{2p}(\dom)}^2\dd t\bigg)^{1/2} 
	\nonumber \\
	&\le C(\delta)\|v\log v\|_{L^2(0,T;L^1(\dom))}
	+ \delta\|v\log v\|_{L^2(0,T;L^{2p}(\dom))}. \nonumber
\end{align}
The first term on the right-hand side is estimated in a similar way as before, 
where $\eta>0$:
\begin{align}\label{aux.vlogv1}
  \|v&\log v\|_{L^2(0,T;L^1(\dom))}
	\le \|v\log v\|_{L^\infty(0,T;L^1(\dom))}^{1/2}
	\|v\log v\|_{L^1(0,T;L^1(\dom))}^{1/2} \\
	&\le \eta\|v\log v\|_{L^\infty(0,T;L^1(\dom))}
	+ C(\eta)\|v\log v\|_{L^1(0,T;L^1(\dom))}. \nonumber
\end{align}
For the second term on the right-hand side of \eqref{aux.eq}, we introduce
the function $g(v)=\max\{2,v\log v\}$ for $v\ge 0$. Then $g\in C^1([0,\infty))$.
(The function $v\mapsto v\log v$ is not $C^1$ at $v=0$, therefore we need to
truncate.)
We use the Sobolev inequality:
\begin{align*}
  \|v\log v\|_{L^2(0,T;L^{2p}(\dom))}
	&\le \|g(v)\|_{L^2(0,T;L^{2p}(\dom))}
	\le C\|g(v)\|_{L^2(0,T;W^{1,q}(\dom))} \\
	&\le C\big(\|g(v)\|_{L^2(0,T;L^q(\dom))} + \|\na g(v)\|_{L^2(0,T;L^q(\dom))}\big),
\end{align*}
where  $q=2dp/(d+2p)$.
The condition $p<d/(d-2)$ guarantees that $q<2$, while $d\ge 2$ yields $q>1$;
thus $q\in(1,2)$.
Applying the Gagliardo--Nirenberg inequality, combined with the Poincar\'e--Wirtinger
inequality, with $\theta_2=d(q-1)/(d(q-1)+q)\le 1$, and then the Young inequality,
we find that
\begin{align*}
  \|g(v)\|_{L^q(\dom)} &\le C\|\na g(v)\|_{L^q(\dom)}^{\theta_2}
	\|g(v)\|_{L^1(\dom)}^{1-\theta_2} + \|g(v)\|_{L^1(\dom)} \\
	&\le \|\na g(v)\|_{L^q(\dom)} + C\|g(v)\|_{L^1(\dom)}
	\le \|\na g(v)\|_{L^q(\dom)} + C\big(1+\|v\log v\|_{L^1(\dom)}\big).
\end{align*}
This yields
\begin{equation}\label{aux.vlogv2}
  \|v\log v\|_{L^2(0,T;L^{2p}(\dom))}
	\le C + C\|\na g(v)\|_{L^2(0,T;L^q(\dom))} + C\|v\log v\|_{L^2(0,T;L^1(\dom))}.
\end{equation}
The last term is estimated as in \eqref{aux.vlogv1}.
We consider the norm of $\na g(v)=\textrm{1}_{\{v\log v>2\}}(1+\log v)\na v$.
For this, we observe that $\textrm{1}_{\{v\log v>2\}}\log v\le C(1+v^\gamma)$ for
some $0<\gamma<(2-q)/(2q)$ and use the H\"older inequality:
\begin{align*}
  \|\na g(v)\|_{L^q(\dom)} &\le \|(1+v^\gamma)\na v\|_{L^q(\dom)}
	\le \big(1 + \|v^\gamma\|_{L^{2q/(2-q)}(\dom)}\big)\|\na v\|_{L^2(\dom)} \\
	&\le C\big(1 + \|v\|_{L^1(\dom)}^{(2-q)/(2q)}\big)\|\na v\|_{L^2(\dom)},
\end{align*}
since the property $2\gamma q/(2-q)<1$ gives 
$v^{2\gamma q/(2-q)}\le C(1+v)$ for $v\ge 0$.
Consequently, by Young's inequality,
$$
  \|\na g(v)\|_{L^q(\dom)} \le C\big(1 + \|v\|_{L^1(\dom)}^{(2-q)/q}
	+ \|\na v\|_{L^2(\dom)}^2\big),
$$  
and an integration over time gives
\begin{align*}
  \|\na g(v)\|_{L^2(0,T;L^q(\dom))}
	&\le C\big(1 + \|v\|_{L^\infty(0,T;L^1(\dom))}^{(2-q)/q}
	+ \|\na v\|_{L^2(0,T;L^2(\dom))}^2\big) \\
	&\le C\big(1 + \|v\|_{L^\infty(0,T;L^1(\dom))}
	+ \|\na v\|_{L^2(0,T;L^2(\dom))}^2\big),
\end{align*}
where we used $(2-q)/q<1$. Thus, \eqref{aux.vlogv2} becomes
\begin{align*}
  \|v\log v\|_{L^2(0,T;L^{2p}(\dom))} 
	&\le C + C\|\na v\|_{L^2(0,T;L^2(\dom))}^2 + C\|v\|_{L^\infty(0,T;L^1(\dom))}	\\
	&\phantom{xx}{}+ C\|v\log v\|_{L^2(0,T;L^1(\dom))}.
\end{align*}

It remains to insert \eqref{aux.vlogv1} and the previous estimate into \eqref{aux.eq}
to conclude that
\begin{align*}
  \|v\log v\|_{L^2(0,T;L^2(\dom))}
  &\le \eta C(\delta)\|v\log v\|_{L^\infty(0,T;L^1(\dom))}
	+ C(\delta,\eta)\|v\log v\|_{L^1(0,T;L^1(\dom))} \\
	&\phantom{xx}{}
	+ \delta C\big(\|\na v\|_{L^2(0,T;L^2(\dom))}^2 + \|v\|_{L^\infty(0,T;L^1(\dom))}
	\big).
\end{align*}
Choosing first $\delta>0$ and then $\eta>0$ sufficiently small finishes the proof.


\section{Tightness criterion}\label{sec.aux}

\begin{lemma}[Tightness criterion]\label{lem.tight}
Let $\dom\subset\R^d$ ($d\ge 1$) be a bounded domain with Lipschitz 
boundary and let $T>0$, $p,q,r\ge 1$, $\alpha\in(0,1)$ if $r\ge p$ and
$\alpha\in(1/r-1/p,1)$ if $r<p$. Let $s\ge 1$ be such that the embedding
$W^{1,q}(\dom)\hookrightarrow L^s(\dom)$ is compact, and let $Y$ be
a Banach space such that the embedding $L^s(\dom)\hookrightarrow Y$ is continuous.
Furthermore, let $(u_n)_{n\in\N}$ be a sequence of functions such that there
exists $C>0$ such that for all $n\in\N$,
$$
  \E\|u_n\|_{L^p(0,T;W^{1,q}(\dom))} + \E\|u_n\|_{W^{\alpha,r}(0,T;Y))} \le C.
$$
Then the laws of $(u_n)$ are tight in $L^p(0,T;L^s(\dom))$ if $q\le d$ and
in $L^p(0,T;C^0(\overline{\dom}))$ if $q>d$. If $p=\infty$, the space
$L^p(0,T;\cdot)$ is replaced by $C^0([0,T];\cdot)$.
\end{lemma}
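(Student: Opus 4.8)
The plan is to reduce the assertion to a deterministic compact--embedding theorem for Bochner--Slobodeckij spaces and then read off tightness from Chebyshev's inequality. Write $E := L^p(0,T;W^{1,q}(\dom)) \cap W^{\alpha,r}(0,T;Y)$ with its natural norm, and let $B$ denote the target space, that is, $B = L^p(0,T;L^s(\dom))$ if $q\le d$ and $B = L^p(0,T;C^0(\overline{\dom}))$ if $q>d$ (with $C^0([0,T];\cdot)$ in place of $L^p(0,T;\cdot)$ when $p=\infty$). By hypothesis each $u_n$ is a $B$-valued random variable with $\E\|u_n\|_E\le C$. The only genuinely analytic ingredient is then the compactness of the embedding $E\hookrightarrow B$; once this is available, tightness follows mechanically.

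For the embedding I would invoke (or, if a self-contained treatment is wanted, reproduce along the lines of \cite{Sim87}) the fractional-in-time Aubin--Lions--Simon lemma: if $X_0\hookrightarrow X_1\hookrightarrow X_2$ are Banach spaces with the first embedding compact and the second continuous, then $L^p(0,T;X_0)\cap W^{\alpha,r}(0,T;X_2)$ is compactly embedded into $L^p(0,T;X_1)$ provided $\alpha\in(0,1)$ in the case $r\ge p$ and $\alpha\in(1/r-1/p,1)$ in the case $r<p$; moreover, if $p=\infty$ the compact embedding holds into $C^0([0,T];X_1)$, the relevant threshold $\alpha>1/r$ being exactly $\alpha>1/r-1/p$ with $1/p=0$. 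I apply this with $X_0=W^{1,q}(\dom)$ and $X_2=Y$: when $q\le d$ I take $X_1=L^s(\dom)$, using that $W^{1,q}(\dom)\hookrightarrow L^s(\dom)$ is compact and $L^s(\dom)\hookrightarrow Y$ is continuous by assumption; when $q>d$ I take $X_1=C^0(\overline{\dom})$, using that $W^{1,q}(\dom)\hookrightarrow C^0(\overline{\dom})$ is compact by the Rellich--Kondrachov/Morrey theorem (the Lipschitz regularity of $\pa\dom$ supplying the needed extension property) and that $C^0(\overline{\dom})\hookrightarrow L^s(\dom)\hookrightarrow Y$ is continuous since $\dom$ is bounded. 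This yields the compactness of $E\hookrightarrow B$.

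Finally, fix $\delta>0$ and set $R_\delta:=C/\delta$. Chebyshev's inequality together with $\E\|u_n\|_E\le C$ gives $\sup_n\Prob(\|u_n\|_E>R_\delta)\le\delta$. Let $K_\delta$ be the closure in $B$ of the ball $\{u\in E:\|u\|_E\le R_\delta\}$; by the previous step $K_\delta$ is compact in $B$. Since $\{u_n\notin K_\delta\}\subset\{\|u_n\|_E>R_\delta\}$, we obtain $\sup_n\Prob(u_n\notin K_\delta)\le\delta$, which is precisely the tightness of the laws of $(u_n)$ in $B$. I expect no substantial obstacle; the only point requiring care is the bookkeeping of the admissible range of $\alpha$ in the fractional Aubin--Lions--Simon lemma, so that one statement covers simultaneously the regimes $r\ge p$ and $r<p$, the borderline case $p=\infty$ with conclusion in $C^0([0,T];\cdot)$, and the two cases $q\le d$ and $q>d$. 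Should a fully elementary proof of the compactness be preferred to citing it, the standard route is to combine an interpolation inequality of the form $\|u\|_{W^{\theta,r}(0,T;X_1)}\le C\|u\|_{L^p(0,T;X_0)}^{1-\theta}\|u\|_{W^{\alpha,r}(0,T;X_2)}^{\theta}$ for an appropriate $\theta$ with the Fr\'echet--Kolmogorov criterion in $L^p(0,T;X_1)$, controlling time translations by the Slobodeckij seminorm and spatial oscillations by the compactness of $X_0\hookrightarrow X_1$, exactly as in \cite{Sim87}.
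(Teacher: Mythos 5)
Your proposal is correct and follows essentially the same route as the paper: both reduce the claim to the compactness of the embedding $L^p(0,T;W^{1,q}(\dom))\cap W^{\alpha,r}(0,T;Y)\hookrightarrow B$ via Simon's fractional Aubin--Lions lemma (Theorem~3 and Lemma~5 of \cite{Sim87}) and then conclude tightness by Chebyshev's inequality applied to the first moments. The only difference is cosmetic: you spell out the intermediate-space bookkeeping (the choices of $X_0,X_1,X_2$, the $q\le d$ versus $q>d$ dichotomy, and the $p=\infty$ endpoint) that the paper leaves implicit in its citation.
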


\begin{proof}
By Theorem 3 and Lemma 5 of \cite{Sim87}, the set
\begin{align*}
  B_R &= \big\{u_n\in L^p(0,T;W^{1,q}(\dom))\cap W^{\alpha,r}(0,T;Y): \\
	&\phantom{xxi}\|u_n\|_{L^p(0,T;W^{1,q}(\dom))}\le R\mbox{ and }
	\|u_n\|_{W^{\alpha,r}(0,T;Y)}\le R\big\}
\end{align*}
is relatively compact in $L^p(0,T;L^s(\dom))$. We deduce from Chebyshev's inequality
that
\begin{align*}
  \Prob(B_R^c) &\le \Prob(\|u_n\|_{L^p(0,T;W^{1,q}(\dom))}>R)
	+ \Prob(\|u_n\|_{W^{\alpha,r}(0,T;Y)}>R) \\
	&\le \frac{1}{R}\big(\E\|u_n\|_{L^p(0,T;W^{1,q}(\dom))}
	+ \E\|u_n\|_{W^{\alpha,r}(0,T;Y)}\big) \le \frac{C}{R}.
\end{align*}
The definition of tightness finishes the proof.
\end{proof}

%
%
%

\end{appendix}



\begin{thebibliography}{11}
\bibitem{Ama89} H.~Amann. Dynamic theory of quasilinear parabolic systems. III. Global 
existence. {\em Math. Z.} 202 (1989), 219--250.

\bibitem{BMM21} V.~Bansaye, A.~Moussa, and F.~Mu\~{n}oz-Hern\'andez. Stability of a 
cross-diffusion system and approximation by repulsive random walks: a duality approach.
Submitted for publication, 2021. arXiv:2109.07146.

\bibitem{BCJ20} M.~Braukhoff, X.~Chen, and A.~J\"ungel. Corrigendum: 
Cross diffusion preventing blow up in the two-dimensional Keller-Segel model. 
{\em SIAM J. Math. Anal.} 52 (2020), 2198--2200. 

\bibitem{BHM13} Z.~Brze\'zniak, E.~Hausenblas, and E.~Motyl. Uniqueness in law of 
the stochastic convolution process driven by L\'evy noise. 
{\em Electron. J. Probab.} 18 (2013), 1--15.

\bibitem{BrMo14} Z.~Brze\'zniak and E.~Motyl. The existence of martingale solutions 
to the stochastic Boussinesq equations. {\em Global Stoch. Anal.} 1 (2014), 175--216.

\bibitem{BrOn10} Z.~Brze\'zniak and M.~Ondrej\'at. Stochastic wave equations with 
values in Riemanninan manifolds. {\em Stochastic Partial Differential Equations and 
Applications, Quad. Mat.} 25 (2010), 65--97.

\bibitem{BrOn13} Z.~Brze\'zniak and M.~Ondrej\'at. Stochastic geometric wave equations
with values in compact Riemannian homogeneous spaces. {\em Ann. Prob.} 41 (2013),
1938--1977.

\bibitem{CDHJ21} L.~Chen, E.~Daus, A.~Holzinger, and A.~J\"ungel. Rigorous derivation 
of population cross-diffusion systems from moderately interacting particle systems. 
{\em J. Nonlin. Sci.} 31 (2021), no. 94, 38 pages.

\bibitem{ChJu04} L.~Chen and A.~J\"ungel. Analysis of a multi-dimensional parabolic 
population model with strong cross-diffusion. 
{\em SIAM J. Math. Anal}. 36 (2004), 301--322.

\bibitem{ChJu06} L.~Chen and A.~J\"ungel. Analysis of a parabolic cross-diffusion 
population model without self-diffusion. {\em J. Diff. Eqs.} 224 (2006), 39--59.

\bibitem{CDJ18} X.~Chen, E.~Daus, and A.~J\"ungel. Global existence analysis of 
cross-diffusion population systems for multiple species. 
{\em Arch. Ration. Mech. Anal.} 227 (2018), 715--747.

\bibitem{DaZa14} G.~Da Prato and J.~Zabczyk. {\em Stochastic Equations in Infinite
Dimensions}. Second edition. Cambridge University Press, Cambridge, 2014.

\bibitem{DDD19} E.~Daus, L.~Desvillettes, and H.~Dietert. About the entropic structure 
of detailed balanced multi-species cross-diffusion equations.  
{\em J. Diff. Eqs.} 266 (2019), 3861--3882.

\bibitem{DGT11} A.~Debussche, N.~Glatt-Holtz, and R.~Temam. Local martingale and 
pathwise solutions for an abstract fluids model. 
{\em Physica D} 240 (2011), 1123--1144.

\bibitem{DLM14} L.~Desvillettes, T.~Lepoutre, and A.~Moussa. Entropy, duality, and 
cross diffusion. {\em SIAM J. Math. Anal.} 46 (2014), 820--853.

\bibitem{DLMT15} L.~Desvillettes, T.~Lepoutre, A.~Moussa, and A.~Trescases.
On the entropic structure of reaction-cross diffusion systems.
{\em Commun. Partial Diff. Eqs.} 40 (2015), 1705--1747.

\bibitem{Deu87} P.~Deuring. An initial-boundary value problem for a certain 
density-dependent diffusion system. {\em Math. Z.} 194 (1987), 375--396.

\bibitem{DJZ19} G.~Dhariwal, A.~J\"ungel, and N.~Zamponi. Global martingale solutions 
for a stochastic population cross-diffusion system. 
{\em Stoch. Process. Appl.} 129 (2019), 3792--3820.

\bibitem{DiMo21} H.~Dietert and A.~Moussa. Persisting entropy structure for nonlocal 
cross-diffusion systems. Submitted for publication, 2021. arXiv:2101.02893.

\bibitem{DrMi13} P.~Dr\'abek and J.~Milota. {\em Methods of Nonlinear Analysis: 
Applications to Differential Equations}. 2dn edition. Springer, Basel, 2013.

\bibitem{Dre08} M.~Dreher. Analysis of a population model with strong cross-diffusion 
in unbounded domains. {\em Proc. Roy. Soc. Edinb. Sec. A} 138 (2008), 769--786.

\bibitem{GGJ03} G.~Galiano, M.~Gar{\'z}on, and A.~J\"ungel. Semi-discretization in time 
and numerical convergence of solutions of a nonlinear cross-diffusion population model. 
{\em Numer. Math.} 93 (2003), 655--673.

\bibitem{Gri02} D.~Grieser. Uniform bounds for eigenfunctions of the Laplacian on
manifolds with boundary. {\em Commun. Partial Diff. Eqs.} 27 (2002), 1283--1299.

\bibitem{Jue15} A.~J\"ungel. The boundedness-by-entropy method for cross-diffusion
systems. {\em Nonlinearity} 28 (2015), 1963--2001.

\bibitem{Jue16} A.~J\"ungel. {\em Entropy Methods for Diffusive Partial Differential
Equations}. Springer Briefs Math., Springer, 2016.

\bibitem{Kim84} J.~Kim. Smooth solutions to a quasi-linear system of diffusion 
equations for a certain population model. {\em Nonlin. Anal.} 8 (1984), 1121--1144.

\bibitem{KPS82} S.~Krein, J.~I.~Petunin, and E.~Semenov. {\em Interpolation of 
Linear Operators}. Amer. Math. Soc., Providence, 1982.

\bibitem{Kro92} P.~Kr\"oger. Upper bounds for the Neumann eigenvalues on a bounded
domain in Euclidean spaces. {\em J. Funct. Anal.} 106 (1992), 353--357.

\bibitem{Kry13} N.~Krylov. A relatively short proof of It\^o's formula for SPDEs 
and its applications. {\em Stoch. Partial Diff. Eqs.: Anal. Comput.}
1 (2013), 152--174.

\bibitem{KuNe20} C.~Kuehn and A.~Neam\c tu. Pathwise mild solutions for quasilinear 
stochastic partial differential equations. {\em J. Diff. Eqs.} 269 (2020), 2185--2227.

\bibitem{LeMo17} T.~Lepoutre and A.~Moussa. Entropic structure and duality for multiple 
species cross-diffusion systems. {\em Nonlin. Anal.} 159 (2017), 298--315.

\bibitem{LiRo15} W.~Liu and M.~R\"ockner. {\em Stochastic Partial Differential 
Equations: an Introduction}. Springer, Cham, 2015.

\bibitem{LNW98} Y.~Lou, W.-M.~Ni, and Y.~Wu. On the global existence of a 
cross-diffusion system. {\em Discrete Contin. Dyn. Syst.} 4 (1998), 193--203.

\bibitem{Met88} M.~M\'etivier. {\em Stochastic Partial Differential Equations in 
Infinite Dimensional Spaces}. Scuola Normale Superiore, Pisa, 1988.

\bibitem{Mou20} A.~Moussa. From nonlocal to classical Shigesada--Kawasaki--Teramoto 
systems: triangular case with bounded coefficients. 
{\em SIAM J. Math. Anal.} 52 (2020), 42--64.

\bibitem{PrRo07} C.~Pr\'ev\^ot and M.~R\"ockner. {\em A Concise Course on
Stochastic Partial Differential Equations}. Lecture Notes Math. 1905.
Springer, Berlin, 2007.

\bibitem{SKT79} N.~Shigesada, K.~Kawasaki, and E.~Teramoto. Spatial segregation 
of interacting species. {\em J. Theor. Biol.} 79 (1979), 83--99.

\bibitem{Sim87} J.~Simon. Compact sets in the space $L^p(0,T;B)$. 
{\em Ann. Math. Pura. Appl.} 146 (1987), 65--96.
\end{thebibliography}
\end{document}